\definecolor{qqqqff}{rgb}{0.,0.,1.}
\definecolor{cqcqcq}{rgb}{0.7529411764705882,0.7529411764705882,0.7529411764705882}
\definecolor{ttqqqq}{rgb}{0.2,0.,0.}
\definecolor{qqqqff}{rgb}{0.,0.,1.}
\definecolor{xdxdff}{rgb}{0.49019607843137253,0.49019607843137253,1.}
\definecolor{zzttqq}{rgb}{0.6,0.2,0.}
\definecolor{cqcqcq}{rgb}{0.7529411764705882,0.7529411764705882,0.7529411764705882}
\definecolor{yqyqyq}{rgb}{0.5019607843137255,0.5019607843137255,0.5019607843137255}
\definecolor{uuuuuu}{rgb}{0.26666666666666666,0.26666666666666666,0.26666666666666666}
\definecolor{xdxdff}{rgb}{0.49019607843137253,0.49019607843137253,1.}
\definecolor{qqqqff}{rgb}{0.,0.,1.}
 \font\ncsc=cmcsc10
\newcommand{\PP}{\mathbb{P}}
\newcommand{\NN}{\mathbb{N}}
\newcommand{\ZZ}{\mathbb{Z}}
\newcommand{\RR}{\mathbb{R}}
\newcommand{\CC}{\mathbb{C}}
\newcommand{\QQ}{\mathbb{Q}}
\newcommand{\TT}{\mathbb{T}}
\newcommand{\Dfk}{\mathfrak{D}}
\newcommand{\mfk}{\mathfrak{m}}
\renewcommand{\div}{\mathrm{div} }
\newcommand{\E}{\CC E}
\renewcommand{\P}{\mathcal{P}}
\renewcommand{\L}{\mathcal{L}}
\newcommand{\N}{\mathcal{N}}
\newcommand{\M}{\mathcal{M}}
\newcommand{\X}{\mathcal{X}}
\newcommand{\T}{\mathcal{T}}
\newcommand{\Q}{\mathcal{Q}}
\newcommand{\V}{\mathcal{V}}
\renewcommand{\H}{\mathcal{H}}
\renewcommand{\O}{\mathcal{O}}
\newcommand{\F}{\mathcal{F}}
\newcommand{\CCC}{\mathscr{C}}
\newcommand{\val}{\mathrm{val}}
\newcommand{\bino}[2]{\begin{pmatrix}
#1 \\
#2 \\
\end{pmatrix}}
\newtheorem{theo}{Theorem}[section]
\newtheorem*{theom}{Theorem}
\newtheorem{prop}[theo]{Proposition}
\newtheorem{coro}[theo]{Corollary}
\newtheorem{lem}[theo]{Lemma}
\theoremstyle{definition}
\newtheorem{defi}[theo]{Definition}
\newtheorem{prob}[theo]{Problem}
\theoremstyle{remark}
\newtheorem{remark}[theo]{Remark}
\newenvironment{rem}[1]{
    \begin{remark}#1}{
    \xqed{\blacklozenge}\end{remark}
}
\theoremstyle{remark}
\newtheorem{example}[theo]{Example}
\newenvironment{expl}[1]{
    \begin{example}#1}{
    \xqed{\lozenge}\end{example}
}
\newcommand{\xqed}[1]{
    \leavevmode\unskip\penalty9999 \hbox{}\nobreak\hfill
    \quad\hbox{\ensuremath{#1}}}
\keywords{Enumerative geometry, tropical refined invariants\\ \textit{Data Statement:} I do not have any data to point.\\
Thomas Blomme,
{\ncsc Universit\'e de Gen\`eve, 5-7 rue du Conseil G\'en\'eral,
1205 Gen\`eve,
Switzerland} \\
\textit{Email :} thomas.blomme@unige.ch}
\begin{document}
 
 
\title{Floor diagrams and enumerative invariants of line bundles over an elliptic curve}
\author{Thomas Blomme}

\begin{abstract}
We use the tropical geometry approach to compute absolute and relative enumerative invariants of complex surfaces which are $\CC P^1$-bundles over an elliptic curve. We also show that the tropical multiplicity used to count curves can be refined by the standard Block-G\"ottsche refined multiplicity to give tropical refined invariants. We then give a concrete algorithm using floor diagrams to compute these invariants along with the associated interpretation as operators acting on some Fock space. The floor diagram algorithm allows one to prove the piecewise polynomiality of the relative invariants, and the quasi-modularity of their generating series.
\end{abstract}

\maketitle

\tableofcontents

\section{Introduction}

\subsection{Overview}

The goal of this paper is to study classical enumerative invariants of some ruled surfaces over an elliptic curve $\E$, generalizing the case of the trivial $\CC P^1$-bundle $\E\times\CC P^1$ studied by J. B\"ohm, C. Goldner and H. Markwig in \cite{bohm2020counts}. The principal tool is a correspondence theorem that relates the classical enumerative invariants to tropical counts of curves in suitable affine integer manifolds. Once in the scope of tropical geometry, it is then possible to use combinatorial techniques to relate the tropical counts to counts of floor diagrams and compute generating series using a Fock space approach.

\medskip	

\textbf{Enumerative geometry.} The varieties we are interested in are ruled surfaces over an elliptic curve. The latter are classified in \cite{hartshorne2013algebraic}. According to the classification, all but two are of the following form: take an elliptic curve $\E$ and a line bundle $\mathcal{L}$ over it, and then consider the projectivization $\PP(\mathcal{L}\oplus\mathcal{O})$ to get a complex surface that is a $\CC P^1$-bundle over $\E$. Such a surface is denoted by $\CC F$. If the degree $\delta$ of the line bundle is non-zero, it can assumed to be positive and there is only one surface up to isomorphism which is thus denoted by $\CC F_\delta$. If the degree is $0$, surfaces are classified by $\alpha\in\E\simeq\mathrm{Pic}\E$ up to inversion and the corresponding surface is denoted by $\CC F_{0,\alpha}$.

\begin{rem}
Theorems 2.12 and 2.15 in Surfaces chapter from \cite{hartshorne2013algebraic} classify $\CC P^1$-bundles over an elliptic curve, which are by definition obtained by projectivizing a rank $2$ vector bundle. In this paper, we only consider the surfaces associated to the decomposable ones. There are exactly two other surfaces up to isomorphism obtained from undecomposable one, and are not handled in this paper.
\end{rem}

Constructed as a projectivization $\PP(\L\oplus\O)$, the surfaces $\CC F$ contain several curves: the distinguished $0$-section $E_0$ image of $e\in\E\mapsto [0:1]$ and the $\infty$-section $E_\infty$ image of $z\in\E\mapsto[l:0]$. It also contains the fibers of $\CC F\to\E$. The second homology group of $\CC F$ generated by the class of a fiber $F$ and the zero section $E_0$. We can then count the number of genus $g$ curves in some homology class $d_1 E_0+d_2 F$ passing through a suitable number of points. Such a count happens not to depend on the choice of the points as long as it is generic, nor the choice of the elliptic curve. It only depends on the choice of $\L$ through its degree $\delta$, as long as $\L$ is generically chosen if this degree is $0$. The paper aims at providing a concrete way to compute these invariants. Similarly, it is also possible to count curves with a fixed tangency profile with the divisors $E_0$ and $E_\infty$. One thus obtains instead so-called \textit{relative} invariants.

\medskip

\textbf{Correspondence theorems and tropical geometry.} In the case of toric surfaces, a major breakthrough in the computation of enumerative invariants was realized in the beginning of the 21st century with the use of \textit{Tropical Geometry}. In \cite{mikhalkin2005enumerative}, G. Mikhalkin proved a correspondence theorem that relates counts of classical curves passing through some specified points, and counts of tropical curves in $\RR^2$ satisfying the same kind of conditions. Using different techniques, namely log-geometry, T. Nishinou and B. Siebert \cite{nishinou2006toric} also proved a similar correspondence theorem. Both approaches got generalized to various settings with new different kinds of constraints, for instance with the insertion of $\psi$-constraints. See \cite{shustin2002patchworking}, \cite{tyomkin2012tropical},\cite{tyomkin2017enumeration}, \cite{mandel2020descendant} or \cite{cavalieri2021counting} for generalizations or other proofs of the correspondence theorem.

Unfortunately, the tropical geometry approach is often restricted due to the possible appearance of so-called \textit{superabundant tropical curves} in some situations. For such curves, the dimension of the space in which the tropical counterparts to the classical curves vary does not match the dimension of the associated complex space. This complicates the correspondence. For instance, in \cite{nishinou2020realization}, T. Nishinou gives a correspondence theorem for curves in abelian surfaces, which are all superabundant. In our case, superabundant curves can occur for the spaces $\TT F_{0,\alpha}$ coming from a torsion element $\alpha$. In the case of the trivial bundle handled in \cite{bohm2020counts}, this is avoided due to the fact that the superabundance comes from curves that realize sections of the line bundle, for which the realization theorem is easily proved.

\medskip

\textbf{Tropical geometry and concrete computations.} The use of a suitable \textit{correspondence theorem} allows one to reduce a complex count to a tropical count, whose computation is still waiting to be carried out. Computing the tropical invariants can be a rather difficult combinatorial task. In the case of invariants in toric surfaces, the tropical Caporaso-Harris formula, which is proved in the tropical setting in \cite{gathmann2007caporaso}, allows one to carry out the computation for the case of the projective plane $\CC P^2$. It also computes enumerative invariants relative to a line. The idea of the tropical proof of the formula is to choose the points constraints in a degenerated way, so that the tropical curves split into simple pieces that are easier to handle. An iterated application of the Caporaso-Harris formula leads the tropical curve that we are looking for to break into small pieces and adopt a so-called \textit{floor decomposition}. This amounts to choose the points through which the tropical curves pass in a stretched position. The fact that the curve break allows to project the problem onto the direction in which the points are stretched and reduce the 2-dimensional enumerative problem to a 1-dimensional enumerative problem.

\medskip

\textbf{Concrete computations and floor diagrams.} Following the iteration of the Caporaso-Harris formula, it only suffices to handle tropical curves passing through a stretched position. To this end, E. Brugall\'e and G. Mikhalkin introduced \textit{floor diagrams} in \cite{brugalle2007enumeration} and \cite{brugalle2008floor}. These were later used for instance by G. Mikhalkin and S. Fomin in \cite{mikhalkin2010labeled}. They enable concrete, and to some extent efficient, computations of tropical invariants. The use of floor diagrams has since been generalized to other situations, for instance by F. Block and L. G\"ottsche \cite{block2016fock} to refined enumerative invariants, and by R. Cavalieri, P. Johnson, H. Markwig and D. Ranganathan \cite{cavalieri2018graphical}\cite{cavalieri2021counting} to compute descendant Gromov-Witten invariants of Hirzebruch surfaces.

\smallskip

Presented this way, floor diagrams may appear to be a mainly toric object, taking its meaning in tropical geometry. Yet, they alo appear in non-toric situations, see \cite{brugalle2015floor}, where the heuristic of floor diagrams in more general situations is explained.

\medskip

\textbf{Floor diagrams and Fock space.} The use of floor diagrams essentially reduces a $2$-dimensional enumerative problem to a $1$-dimensional one. It reduces a problem on tropical curves to a problem on tropical covers. Tropical covers are maps between graphs studied in \cite{caporaso2014gonality} and \cite{cavalieri2010tropical}. It also emphasizes some deep relation between planar geometry, and operators on a Fock space. This approach was pioneered by Y. Cooper and R. Pandharipande in \cite{cooper2017fock}, where they relate Severi degrees of $\CC P^1\times\CC P^1$ and $\CC P^2$ to matrix elements of exponential of some operator on a Fock space. This approach was generalized in \cite{block2016fock} to a wider family of toric surfaces, and in \cite{cavalieri2021counting} for Hirzebruch surfaces with the insertion of descendant conditions.

\smallskip

The dimension reduction coming from the use of floor diagrams essentially comes from the existence of a projection. In the case of toric surfaces, although it is possible to project in several directions, each time one obtains a problem on a line. When we do enumeration in for instance the trivial bundle $\TT E\times\TT P^1$, there are now two directions in which one can projects: the vertical direction projects onto a line, and the horizontal direction projects onto a circle. In \cite{bohm2020counts}, the authors use the projection to $\TT E$ to relate enumerative invariants of $\TT E\times\TT P^1$ to some counts over an elliptic curve, relating to some other problems they studied in \cite{boehm2018tropical}. Such projection over a circle also relates to a Fock space interpretation using a \textit{trace formula} which was already noticed in \cite{cooper2017fock} to compute some invariants of $\E\times\CC P^1$.

\medskip

\textbf{Tropical geometry and refined invariants.} The correspondence theorem \cite{mikhalkin2005enumerative} for toric surfaces expresses the multiplicity of a tropical curve as a product over the vertices of the tropical curve. In \cite{block2016refined}, F. Block and L. G\"ottsche proposed to replace the vertices multiplicity $m$ by their quantum analog $[m]_q=\frac{q^{m/2}-q^{-m/2}}{q^{1/2}-q^{-1/2}}$, which is a symmetric Laurent polynomial in the variable $q$. This new multiplicity is called \textit{refined}, since it specializes to the usual complex multiplicity when $q$ is set equal to $1$. In \cite{itenberg2013block}, I. Itenberg and G. Mikhalkin proved that for the tropical enumerative problem of counting curves of fixed genus in a toric surface passing through a suitable number of points, the refined multiplicity from Block-G\"ottsche also provides an invariant. Furthermore, the setting in which F. Block and L. G\"ottsche extended the results from \cite{cooper2017fock} in \cite{block2016fock} is already adapted to their refined multiplicities, as they consider a deformed version of the Fock space and Heisenberg algebra from \cite{cooper2017fock}. This way, their computations yield the refined invariants versions of Severi degrees considered in \cite{block2016refined} and \cite{itenberg2013block} rather than the usual ones. The meaning of these tropical refined invariants remains an intense area of studies \cite{gottsche2014refined}, \cite{bousseau2019tropical}, \cite{mikhalkin2017quantum}, \cite{blomme2021refinedreal} that have already been generalized to different settings \cite{blomme2021refinedtrop}, \cite{gottsche2019refined}, \cite{blechman2019refined}, \cite{schroeter2018refined}, but remain quite mysterious.

\subsection{Results}

We now develop the various results of the paper, and their role in the previous considerations.

\medskip

\textbf{Complex setting.} We consider enumeration of curves in $\CC F$, which is either $\CC F_\delta$ or some $\CC F_{0,\alpha}$ for a generic choice of $\alpha$. A curve in $\CC F_\delta$ is said to be of bidegree $(d_1,d_2)$ if it realizes the homology class $d_1 E_0+d_2 F$, where $E_0$ is the class of the zero section, and $F$ the class of a fiber. The number of genus $g$ irreducible curves of bidegree $(d_1,d_2)$ passing through $\delta d_1+2d_2+g-1$ points in general position is denoted by $N_{g,(d_1,d_2)}^{\delta,\mathrm{cpx}}$. The count including reducible curves is denoted with a $\bullet$. This count happens not no depend on the choice of the point configuration as long as it is generic, nor the choice of the elliptic curve, and only depends on the surface through the degree $\delta$. See Section \ref{section complex setting} for details.

\begin{rem}
The genus of a curve $\CC C$ is defined to be $g=1-\chi(\CC C)$. In particular, the genus of a reducible curves with $r$ components $\CC C_i$ is $1-r+\sum_1^r g(\CC C_i)$.
\end{rem}

A curve of bidegree $(d_1,d_2)$ has $\delta d_1+d_2$ intersection points with the $0$-section $E_0$, and $d_2$ intersection points with the $\infty$-section $E_\infty$. Thus, one can further look for genus $g$ curves of bidegree $(d_1,d_2)$ that have a specified intersection profile with $E_0$ and $E_\infty$. Such a data is given by two partitions $\mu_0\vdash \delta d_1+d_2$ and $\mu_\infty\vdash d_2$. A curve is said to have tangency profile $(\mu_0,\mu_\infty)$ if it has $\mu_{0i}$ (resp. $\mu_{\infty i}$) points of tangency index $i$ with $E_0$ (resp. $E_\infty$).

\smallskip

We want to impose additional conditions of two types: having a tangency of order $i$ at a specific point of $E_0$ (resp. $E_\infty$), having a tangency of order $i$ somewhere on $E_0$ (resp. $E_\infty$). To this extent, let $\mu_0,\nu_0,\mu_\infty,\nu_\infty$ be four partitions such that $\mu_0+\nu_0\vdash\delta d_1+d_2$ and $\mu_\infty+\nu_\infty\vdash d_2$ ($\vdash n$ meaning partition $n$, see end of Introduction for notations about partitions.), one denotes by $N_{g,(d_1,d_2)}^{\delta,\mathrm{cpx}}(\mu_0,\mu_\infty,\nu_0,\nu_\infty)$ the number of irreducible curves passing through $|\nu_0|+|\nu_\infty|+g-1$ points in generic position such that:
\begin{itemize}[label=-]
\item have $\mu_{0i}$ (resp. $\mu_{\infty i}$) intersection points of tangency index $i$ at specified points of $E_0$ (resp. $E_\infty$),
\item have $\nu_{0i}$ (resp. $\nu_{\infty i}$) more intersection points of tangency index $i$ at unspecified points of $E_0$ (resp. $E_\infty$). 
\end{itemize}
Similarly, the number with a $\bullet$ denotes the number including the reducible curves. This is exactly the analog of the enumerative problem defining the invariants $N_{d,g}(\alpha,\beta)$ involved in the Caporaso-Harris formula from \cite{gathmann2007caporaso}. They also do not depend on the choice of the constraints as long as they are generic, nor the choice of the elliptic curve, and they only depend on $\CC F$ through the degree $\delta$.

\medskip

\textbf{Tropical setting.} In the tropical world, one defines tropical counts of genus $g$ tropical curves of bidegree $(d_1,d_2)$ in the tropical surface $\TT F$, which is the total space of some tropical line bundle over a tropical elliptic curve $\TT E$. Briefly, they are constructed as follows, see section \ref{section tropical line bundle elliptic curve} for more details.
\begin{itemize}[label=-]
\item In the case of the trivial line bundle, this total space is just $\TT E\times\RR\subset \TT E\times\TT P^1$, where $\TT E$ is a tropical elliptic curve. Topologically, a tropical elliptic curve is just a circle. Thus, $\TT E\times\RR$ is an open cylinder, and $\TT E\times\TT P^1$ is a compact cylinder. One other way to obtain it is to take the strip $[0;l]\times \RR\subset\RR^2$ and to identify the points $(0,y)$ and $(l,y)$.
\item For a non-trivial degree $0$ line bundle, we still take the strip $[0;l]\times \RR\subset\RR^2$ but identify the two sides by a translation of size $\alpha$. Only the value of $\alpha$ mod $l$ is important, and we get a cylinder $\TT F_{0,\alpha}$.
\item If the line bundle is of non-zero degree, the total space $\TT F_\delta$ is still a cylinder but with a non-trivial integer structure. It can be constructed from the strip $[0;l+\varepsilon[\times\RR$ identifying the small strips $[0;\varepsilon[\times\RR$ and $[l;l+\varepsilon[\times\RR$ by a shear transformation. In other words, still viewing it as some quotient of the strip $[0;l]\times\RR$ identifying both sides, a line that crosses the right boundary component of the strip comes back from the left boundary component with a different slope. Only the degree $\delta$ matters and we get cylinders $\TT F_\delta$.
\end{itemize} 

\medskip

\textbf{Correspondence theorem.} We provide a theorem that gives a correspondence between the count of complex curves in $\CC F$ which are solution to the previous enumerative problem, and the count of tropical curves inside $\TT F$ solution to the analog tropical problem, counted with their usual multiplicity as a product over the vertices. If $h:\Gamma\to\TT F$ is a trivalent parametrized tropical curve (see Section \ref{section tropical line bundle elliptic curve} for definitions), its multiplicity is
$$m^\CC_\Gamma=\prod_V m_V,$$
where the product is over the trivalent vertices $m_V=|\det(a_V,b_V)|$, with $a_V$ and $b_V$ are two out of the three outgoing slopes of $h$. Contrarily to the toric case, there is no unique affine chart for $\TT F$, so slopes are not really defined, but fortunately, the determinant is invariant under a change of chart, so that the multiplicity is well-defined.

\begin{theom}[\ref{theorem correspondence}]
The count of genus $g$ parametrized tropical curve of bidegree $(d_1,d_2)$ in $\TT F$ passing through $\delta d_1+2d_2+g-1$ points in general position with multiplicity $m^\CC_\Gamma$ does not depend on the choice of the points and satisfies
$$N_{g,(d_1,d_2)}^{\delta,\mathrm{cpx},\bullet}=N_{g,(d_1,d_2)}^{\delta,\mathrm{trop},\bullet} \text{ and }N_{g,(d_1,d_2)}^{\delta,\mathrm{cpx}}=N_{g,(d_1,d_2)}^{\delta,\mathrm{trop}}.$$
The count of genus $g$ parametrized tropical curve of bidegree $(d_1,d_2)$ in $\TT F$ and tangency profile $(\mu_0+\nu_0,\mu_\infty+\nu_\infty)$, prescribed tangencies at $|\mu_0|$ points on $E_0$ and $|\mu_\infty|$ points on $E_\infty$, passing through $|\nu_0|+|\nu_\infty|+g-1$ points in general position with multiplicity $\frac{1}{I^{\mu_0+\mu_\infty}} m^\CC_\Gamma$ (where $I^\mu=\prod_i i^{\mu_i}$) does not depend on the choice of the constraints and satisfies
$$N_{g,(d_1,d_2)}^{\delta,\mathrm{cpx},\bullet}(\mu_0,\mu_\infty,\nu_0,\nu_\infty)=N_{g,(d_1,d_2)}^{\delta,\mathrm{trop},\bullet}(\mu_0,\mu_\infty,\nu_0,\nu_\infty),$$
$$N_{g,(d_1,d_2)}^{\delta,\mathrm{cpx}}(\mu_0,\mu_\infty,\nu_0,\nu_\infty)=N_{g,(d_1,d_2)}^{\delta,\mathrm{trop}}(\mu_0,\mu_\infty,\nu_0,\nu_\infty).$$
\end{theom}

\medskip

\textbf{Tropical refined enumeration.} Then, we show that replacing the complex multiplicity with the refined multiplicity from Block-G\"ottsche yields a tropical invariant. Following the definition of the complex multiplicity preceding the correspondence theorem, the refined multiplicity is defined as
$$m_\Gamma^q=\prod_V [m_V]_q=\prod_V \frac{q^{m_V/2}-q^{-m_V/2}}{q^{1/2}-q^{-1/2}}\in\ZZ[q^{\pm 1/2}].$$

\begin{theom}[\ref{theorem refined invariance}]
The count of genus $g$ parametrized tropical curve of bidegree $(d_1,d_2)$ in $\TT F$ passing through $\delta d_1+2d_2+g-1$ points in general position with multiplicity $m_\Gamma^q$ does not depend on the choice of the points as long as it is generic.
\end{theom}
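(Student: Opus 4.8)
The plan is to run the wall-crossing argument of Itenberg and Mikhalkin \cite{itenberg2013block}, transported from toric surfaces to the cylindrical target $\TT F_\delta$. Set $n=\delta d_1+2d_2+g-1$. Since $\delta>0$ the enumerative problem is free of superabundant curves (these being confined to the degree-zero spaces $\TT F_{0,\alpha}$ with $\alpha$ torsion), so the relevant part of the moduli space of parametrized tropical curves of genus $g$ and bidegree $(d_1,d_2)$ in $\TT F_\delta$ has the expected dimension; consequently, for a generic point configuration $\mathbf p=(p_1,\dots,p_n)\in(\TT F_\delta)^n$ every such curve through $\mathbf p$ is trivalent with the $p_i$ lying in the interiors of pairwise distinct edges, and on the dense open set of generic $\mathbf p$ the refined count $\sum_\Gamma m^q_\Gamma$ is well defined and locally constant. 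It therefore suffices to prove that this count does not jump across a generic point of each codimension-one wall of $(\TT F_\delta)^n$.

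As in the toric case, a generic point of the discriminant is a configuration through which there passes a parametrized tropical curve with exactly one elementary degeneracy, of one of two types: \textbf{(i)} the curve has a single four-valent vertex, all other vertices being trivalent and the marked points generic; or \textbf{(ii)} a marked point sits on a trivalent vertex, everything else being generic. Checking that these exhaust the walls is where the cylindrical geometry must be examined: edges that wind around the cylinder and the non-triviality of the integral affine structure produce no new wall, because winding numbers are locally constant and because $\TT F_\delta$ is everywhere \emph{locally} isomorphic, as an integral affine manifold, to an open piece of $\RR^2$. Hence every wall phenomenon takes place inside such a chart, in which the vertex multiplicities $m_V=|\det(a_V,b_V)|$ — and therefore $m^q_\Gamma$ — are chart-independent, so the local analysis is word for word the toric one of \cite{itenberg2013block}. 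A wall of type \textbf{(ii)} causes no jump: the two combinatorial types on either side of it describe the same tropical curve, the marked point merely being recorded on one or the other of two edges at the vertex.

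Walls of type \textbf{(i)} are the crux. Near such a point, a curve $\Gamma_0$ with a single four-valent vertex $V_0$ — with weighted primitive outgoing directions $a_1,a_2,a_3,a_4$, listed in cyclic order and satisfying $a_1+a_2+a_3+a_4=0$ — is the common limit of the at most three one-parameter families obtained by splitting $V_0$ into two trivalent vertices joined by a short bounded edge, one family for each pairing $\{a_1a_2\,|\,a_3a_4\}$, $\{a_1a_3\,|\,a_2a_4\}$, $\{a_1a_4\,|\,a_2a_3\}$. Writing $d_{ij}=|\det(a_i,a_j)|$, the contribution of the rest of $\Gamma_0$ to $m^q$ is common to the three families, while the contribution of $V_0$ becomes $[d_{12}]_q[d_{34}]_q$, $[d_{13}]_q[d_{24}]_q$ and $[d_{14}]_q[d_{23}]_q$ respectively. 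The cyclic ordering forces these families to split two against one between the two sides of the wall, the lonely one being — after relabeling the $a_i$ — the pairing for which $d_{12}d_{34}=d_{13}d_{24}+d_{14}d_{23}$; such a labeling exists by the Grassmann--Pl\"ucker relation $\det(a_1,a_2)\det(a_3,a_4)-\det(a_1,a_3)\det(a_2,a_4)+\det(a_1,a_4)\det(a_2,a_3)=0$ valid in $\RR^2$. Invariance across the wall is then exactly the quantum-integer identity
\[
[d_{12}]_q\,[d_{34}]_q=[d_{13}]_q\,[d_{24}]_q+[d_{14}]_q\,[d_{23}]_q ,
\]
the Block--G\"ottsche / Itenberg--Mikhalkin lemma \cite{block2016refined,itenberg2013block}: parametrizing the solutions of $a_1+\dots+a_4=0$ by $x=\det(a_1,a_2)$, $y=\det(a_1,a_3)$, $z=\det(a_2,a_3)$ (so that $\det(a_1,a_4)=-(x+y)$, $\det(a_2,a_4)=x-z$, $\det(a_3,a_4)=y+z$), it is the one-line identity $[x]_q[y+z]_q=[x+y]_q[z]_q+[y]_q[x-z]_q$, immediate after the substitution $q^{1/2}=s$.

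The step I expect to be the genuine obstacle is not this algebra — the quantum identity is classical — but the careful justification of the wall classification for $\TT F_\delta$: one must verify that the absence of superabundance for $\delta>0$ really does force every generic discriminant point to be of type \textbf{(i)} or \textbf{(ii)}, and that no degeneration peculiar to the cylinder (a component slipping around it, a vertex meeting the winding of another edge, a curve sliding onto $E_0$ or $E_\infty$) has been missed. Granted the description of the tropical moduli space from the earlier sections this is routine, but it is where the care is needed; the same bookkeeping then gives the invariance of the count of possibly-reducible curves as well.
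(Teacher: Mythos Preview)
Your approach is the same as the paper's --- a wall-crossing argument transported from \cite{itenberg2013block} to the cylinder --- but your classification of the codimension-one walls is incomplete, and this is a genuine gap. For curves of positive genus (which is the generic situation here, since any curve with $d_1>0$ must wind around the cylinder) there is a \emph{third} wall type beyond your (i) and (ii): the curve degenerates to one having \emph{two} quadrivalent vertices joined by a pair of parallel edges. This is exactly case~(c) in the paper's proof, which cites Proposition~3.9 of \cite{gathmann2007numbers} via \cite{itenberg2013block}. It arises when the moving marked point lies on an edge whose two sides belong to the same component of $\Gamma\setminus h^{-1}(\P)$ --- that is, the edge sits on a cycle of $\Gamma$ --- so that moving the point deforms the cycle until it collapses in a way that is not a generic four-valent splitting. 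Your quantum Pl\"ucker identity handles case~(a) (your type~(i)) beautifully and in more detail than the paper gives, but it does not apply to case~(c); in \cite{itenberg2013block} the invariance across such walls requires a separate argument.

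Ironically, the one genuinely cylindrical subtlety you flag at the end is precisely about this missing case: the paper's remark after the proof observes that if the sole marked point on a cycle lies on a cycle that is non-trivial in $H_1(\TT F_\delta,\ZZ)$, then event~(c) \emph{cannot} occur. So the cylinder-specific bookkeeping you anticipate is needed exactly for the wall type you omitted. A smaller point: your treatment of type~(ii) as ``the same curve on both sides, marked point on one edge or the other'' is too quick --- a trivalent vertex has three adjacent edges, so up to three marked combinatorial types can meet at such a wall and one must check they balance --- but this is routine and not the real issue.
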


\begin{rem}
The analog statement for the relative refined counts, \textit{i.e.} the refined count of curves with the tangency conditions to $E_0$ and $E_\infty$ is also true. The invariance with respect to the choice of the tropical elliptic curve $\TT E$ follows from a scaling argument, and the independance on the choice of the cylinders if $\delta=0$ follows from the computation using floor diagrams.
\end{rem}

\medskip

\textbf{Floor diagram algorithm.} Following \cite{brugalle2007enumeration}, we introduce suitable floor diagrams and their multiplicities to provide an algorithm to compute the invariants involved in the above theorems, both classical and refined.  As in the toric surface case, the floor diagram algorithm can also be seen as the iterated version of some Caporaso-Harris type formula, which is stated in Theorem \ref{theorem caporaso harris formula}.

\medskip

\textbf{Generating series and behavior of the invariants.} The floor diagram algorithm enables a concrete computation of the invariants. Although this could be the sought purpose, it is often more interesting to study the behavior of  and the regularity of the families of invariants, varying some of their arguments, such as the genus or the degree. For instance, consider double Hurwitz numbers, which are cover of $\CC P^1$ satisying specific ramification data. There is a way to use tropical geometry to compute them. It was observed in \cite{goulden2005towards}, and later studied in \cite{shadrin2008chamber}, that double Hurwitz numbers with a prescribed number of ramification points are piecewise polynomial functions. This was recovered using tropical geometry in \cite{cavalieri2010tropical}. Such behavior was also proved for double Gromov-Witten invariants of Hirzebruch surfaces in \cite{ardila2017double}. We here prove a similar statement: we use the floor diagrams to prove a polynomiality statement for the relative Gromov-Witten invariants of $\CC F$. We consider partitions $\nu_0$ and $\nu_\infty$ of fixed length $l_0$ and $l_\infty$, but of arbitrary size, provided that they satisfy $\|\nu_0\|-\|\nu_\infty\|=\delta d_1$ for some fixed $d_1$. Let $d_2=\|\nu_\infty\|$. We look for curves of genus $g$ of fixed bidegree $(d_1,d_2)$ and tangency profile $(\nu_0,\nu_\infty)$ passing through $l_0+l_\infty+g-1$ points in general position. We consider a partition $\nu$ of fixed length $l$ as a tuple $(\nu_1,\dots,\nu_l)$.

\begin{theom}[\ref{theorem polynomiality}]
For any fixed $g$ and $d_1$, the function $(\nu_0,\nu_\infty)\mapsto N^\mathrm{trop}_{g,(d_1,d_2)}(0,0,\nu_0,\nu_\infty)$ is a piecewise polynomial function in the variables $\nu_{0,1},\dots,\nu_{0,l_0},\nu_{\infty, 1},\dots,\nu_{\infty, l_\infty-1}$.
\end{theom}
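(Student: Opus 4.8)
The plan is to combine the floor diagram algorithm with the theory of weighted vector partition functions. By that algorithm, $N^{\mathrm{trop}}_{g,(d_1,d_2)}(0,0,\nu_0,\nu_\infty)$ is a finite sum $\sum_{\mathcal{D}}\mu(\mathcal{D})$ over the floor diagrams $\mathcal{D}$ compatible with the data at hand: $d_1$ floors arranged over the tropical elliptic base, $l_0$ unbounded elevators toward $E_0$ of weights $\nu_{0,1},\dots,\nu_{0,l_0}$, $l_\infty$ unbounded elevators toward $E_\infty$ of weights $\nu_{\infty,1},\dots,\nu_{\infty,l_\infty}$, first Betti number at most $g$, and $l_0+l_\infty+g-1$ marked points. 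First I would group this sum according to the \emph{combinatorial type} of $\mathcal{D}$, that is, the marked graph obtained by forgetting all elevator weights. A connected floor diagram of the relevant kind has a number of bounded elevators bounded in terms of $g$ and $d_1$ alone (its first Betti number is at most $g$, and it has $d_1$ floors), so, the number of floors, of unbounded elevators and of marked points all being fixed, only finitely many combinatorial types occur; moreover this finite list depends only on $g,d_1,l_0,l_\infty$, not on the actual sizes of the parts $\nu_{0,j},\nu_{\infty,j}$, as soon as these are positive.

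Next I would analyse a single combinatorial type $\theta$. Its unbounded elevators carry the weights $\nu_{0,j}$ and $\nu_{\infty,j}$ (with $\nu_{\infty,l_\infty}$ eliminated via $\|\nu_0\|-\|\nu_\infty\|=\delta d_1$, i.e.\ $\nu_{\infty,l_\infty}=\|\nu_0\|-\delta d_1-\sum_{j<l_\infty}\nu_{\infty,j}$), while its bounded elevator weights are constrained only by the balancing condition at each floor. Orienting every elevator toward $E_\infty$, these conditions read $Mw=b(\nu)$, where $M$ is the incidence matrix of the resulting oriented graph, $w$ is the vector of bounded elevator weights, and $b(\nu)$ — which collects the contributions of the (fixed) horizontal slopes of the floors and of the unbounded elevators — depends affine-linearly on $\nu$. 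Hence the floor diagrams of type $\theta$ are in bijection with the lattice points of the flow polytope $P_\theta(\nu)=\{w\ge 0:\ Mw=b(\nu)\}$, and the multiplicity $\mu(\mathcal{D})$, being by its very definition a product of vertex determinants and elevator factors, is the value at such a lattice point of a fixed polynomial $\Phi_\theta$ in all the weights. The contribution of $\theta$ to $N^{\mathrm{trop}}$ is thus $\sum_w \Phi_\theta(w,\nu)$, the sum running over the lattice points of $P_\theta(\nu)$.

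It remains to evaluate such sums. Since $M$ is the incidence matrix of an oriented graph it is totally unimodular, so $P_\theta(\nu)$ is an integral polytope for every integral value of $\nu$; by the Brion--Vergne theory of (weighted) vector partition functions, a lattice point sum of a polynomial over such a polytope is a piecewise \emph{polynomial} function of the right-hand side $b(\nu)$, the pieces being the chambers of an associated polyhedral fan — here total unimodularity is precisely what turns the a priori piecewise quasi-polynomial answer into an honest polynomial on each chamber. Composing with the affine-linear map $\nu\mapsto b(\nu)$ and summing the contributions of the finitely many types $\theta$ (after passing to a common refinement of their chamber decompositions, still a polyhedral subdivision) yields a piecewise polynomial function of $(\nu_{0,1},\dots,\nu_{0,l_0},\nu_{\infty,1},\dots,\nu_{\infty,l_\infty-1})$, which is the claim.

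The crux will be the unimodularity input: in $\TT F_\delta$ an elevator or a floor changes slope when it crosses the seam of the elliptic base, so one must check that this only modifies the inhomogeneous term $b(\nu)$ and leaves the constraint matrix a genuine graph incidence matrix (equivalently, that the flow structure on the floor diagram is the expected one), and one must also ensure that the chamber structures of the various weighted sums are compatible so that the pieces glue. A more routine but necessary point is to set up carefully the bookkeeping of the cyclic arrangement of the $d_1$ floors and of the distribution of the $l_0+l_\infty+g-1$ marked points over floors and elevators, so that the finite list of combinatorial types and the multiplicity carried by each indeed reproduce the output of the floor diagram algorithm.
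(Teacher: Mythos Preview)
Your overall strategy is sound and would yield a correct proof, but it differs from the paper's argument and carries a few misconceptions that you should repair.

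\medskip

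\textbf{Comparison with the paper.} The paper's proof is considerably more elementary. After fixing the finite list of marked floor diagrams (up to edge weights), it chooses for each diagram a minimal set $\Dfk_1$ of bounded edges whose complement is simply connected; the weights on $\Dfk_1$ then determine all other edge weights affinely. The multiplicity is a polynomial in these free weights, and the paper concludes by iterating the elementary observation that $\sum_{k=0}^n P(k)$ is again a polynomial in $n$ whenever $P$ is a polynomial. Your route via Brion--Vergne weighted vector partition functions and total unimodularity of incidence matrices reaches the same conclusion but with substantially heavier machinery; what it buys is a cleaner conceptual explanation of the chamber structure, whereas the paper's argument is self-contained and requires no outside input.

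\medskip

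\textbf{Points to correct.} First, a floor diagram does not have $d_1$ floors: the floors carry positive integer weights summing to $d_1$, so their number is at most $\min(g,d_1)$. This does not harm your finiteness claim, but it does matter for the multiplicity: the floor contribution is $w_\F^{\val(\F)-1}\sigma_1(w_\F)$, and $\sigma_1$ is certainly not polynomial. You must therefore include the tuple of floor weights in your notion of combinatorial type (there are only finitely many such tuples since $d_1$ is fixed), so that $\sigma_1(w_\F)$ becomes a constant on each type and only the elevator weights vary. Your phrase ``product of vertex determinants and elevator factors'' suggests you may be thinking of the tropical curve rather than the floor diagram; make sure you are summing floor diagram multiplicities.

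Second, your anticipated ``crux'' about the seam and the cyclic arrangement is misplaced. A floor diagram is already an abstract oriented weighted graph with no oriented cycles; the monodromy of $\TT F_\delta$ has been absorbed into the floor multiplicity $\sigma_1(w_\F)$ and into the divergence condition $\div\F=\delta w_\F$. Elevators are vertical and never cross the seam, and there is no cyclic ordering on the floors. So the constraint matrix on bounded elevator weights is exactly the incidence matrix of the underlying graph, with the $\delta w_\F$ terms (now constants) sitting in $b(\nu)$, and your unimodularity input goes through without further work.
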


Recently, E. Brugall\'e and A. J. Puentes \cite{brugalle2020polynomiality} proved a polynomiality statement for the coefficients of a fixed codegree in the families of refined invariance of toric surfaces. Such a behavior could also be studied for the hereby introduced refined invariants using the floor diagrams.

In the case of the $\CC F_{0,\alpha}$, instead of varying the intersection number with the zero-section $d_2$, it is also possible to vary the intersection number with a fiber $d_1$. The floor diagrams introduced in this paper allow one to recover the quasi-modularity statement from \cite{bohm2020counts}. Similarly to the polynomiality, the quasi-modularity is also a desirable property because it allows for a control of the asymptotic of the coefficients. Quasi-modular functions are a generalization of the modular forms, which are functions having nice transformations properties with respect to the action of the modular group $PSL_2(\ZZ)$. Following \cite{kaneko1995generalized}, quasi-modular forms express as a polynomial in the Eisenstein series $G_2$, $G_4$ and $G_6$, where $G_{2k}(y)=\sum_1^\infty \left(\sum_{d|n}d^k\right)y^n$ up to an affine transformation. They are stable by the operator $D=y\frac{\mathrm{d}}{\mathrm{d}y}$. The following theorem also generalizes the statement from \cite{bohm2020counts} because it also encompasses the case of relative invariants.

\begin{theom}\ref{theorem quasimodularity}
For fixed $g$, $d_2$, $\mu_0$, $\mu_\infty$, $\nu_0$ and $\nu_\infty$, the generating series
$$F_{g,d_2}(\mu_0,\mu_\infty,\nu_0,\nu_\infty)(y)=\sum_{d_1} N_{g,(d_1,d_2)}^\mathrm{trop}(\mu_0,\mu_\infty,\nu_0,\nu_\infty)y^{d_1},$$
are quasi-modular forms.
\end{theom}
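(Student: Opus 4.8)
The plan is to leverage the floor diagram algorithm together with the Fock space / operator interpretation alluded to in the introduction, reducing the computation of $N_{g,(d_1,d_2)}^\mathrm{trop}(\mu_0,\mu_\infty,\nu_0,\nu_\infty)$ to a trace of a product of operators on the relevant Fock space. The key point is that fixing $g$, $d_2$, $\mu_0$, $\mu_\infty$, $\nu_0$, $\nu_\infty$ and letting $d_1$ vary corresponds, on the floor diagram side, to varying the number of floors while keeping all the local combinatorial data (the multiplicities attached to vertices and edges, the genus contributions, the tangency data) within a bounded range. Concretely, a floor diagram contributing to bidegree $(d_1,d_2)$ with the prescribed tangency and genus data has exactly $d_1$ floors (or a quantity growing linearly in $d_1$), and the weight of the diagram factors as a product of local weights along the "elevator" edges connecting successive floors. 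Summing over $d_1$ therefore has the shape of a trace formula: $\sum_{d_1} N^\mathrm{trop}_{g,(d_1,d_2)}(\cdots)\, y^{d_1} = \mathrm{Tr}\!\left( A\, \mathcal{H}(y)^{?}\, B \right)$-type expression, where $\mathcal{H}(y)$ is a transfer operator encoding the passage from one floor to the next weighted by $y$, and $A$, $B$ encode the boundary tangency conditions $\mu_0,\nu_0$ and $\mu_\infty,\nu_\infty$. Because the curve projects over an elliptic curve (a circle, tropically), one genuinely gets a trace rather than a matrix element, which is precisely the source of the modularity.

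The steps I would carry out, in order, are as follows. First, set up the floor diagram algorithm for $\TT F_\delta$ from the earlier sections and identify the weight of a floor diagram as a product of local contributions, isolating which data stays bounded as $d_1\to\infty$. Second, organize the sum over $d_1$ as a sum over the number of floors, and recognize the "gluing of floors" as composition of a fixed linear operator acting on a space graded by the partition data flowing through the elevators; the generating variable $y$ tracks the number of floors. Third, invoke (or re-derive, in the tropical/combinatorial setting) the standard fact — going back to Bloch–Okounkov and used in \cite{bohm2020counts}, \cite{cooper2017fock} — that traces of such operators built from the (possibly refined) Heisenberg algebra, with the vertex operators being of the specific form dictated by the tropical vertex multiplicities, produce quasi-modular forms. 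The cleanest route is to show that each such trace is, up to normalization, a polynomial in the $D$-derivatives of the Eisenstein series $G_2, G_4, G_6$, using that the relevant matrix coefficients are controlled by expressions of the form $\sum_{d|n} d^k$ arising from summing geometric-type series over elevator weights. Fourth, handle the genus: fixing $g$ bounds the total "defect" (number of nontrivial cycles plus higher-valence corrections) in the floor diagram, so only finitely many "shapes" of diagrams contribute for each $g$, and each shape contributes a trace of the above type; a finite sum of quasi-modular forms is quasi-modular. Fifth, deal with the relative (tangency) conditions: the partitions $\mu_0,\mu_\infty,\nu_0,\nu_\infty$ being fixed only affects the boundary operators $A$, $B$, which are fixed (independent of $d_1$), so they do not disturb quasi-modularity of the trace.

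The main obstacle I anticipate is the third step: verifying that the specific operators produced by the tropical floor diagram weights for $\TT F_\delta$ — in particular the role of the degree $\delta$ twisting the integer structure of the cylinder, which changes the slopes of elevator edges crossing the monodromy cut — still fall within the class of operators whose traces are known to be quasi-modular. In \cite{bohm2020counts} this is done for $\delta=0$; here the nontrivial affine monodromy means the transfer operator is a $\delta$-twisted analog of the usual one, and one must check that the twist only rescales weights in a way compatible with the Eisenstein-series structure (essentially, that it replaces $n$ by $\delta d_1 + (\text{bounded})$ inside the divisor sums, which is harmless). A secondary technical point is keeping careful track of the substitution $d_2 = \|\nu_\infty\|$ and the point count $|\nu_0|+|\nu_\infty|+g-1$ so that the dimension of the enumerative problem is constant along the family, ensuring the operator $\mathcal{H}(y)$ is genuinely $d_1$-independent; this is bookkeeping rather than a conceptual difficulty. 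Once the trace formula is in place, quasi-modularity follows from the structural theory of quasi-modular forms as in \cite{kaneko1995generalized}, together with stability under $D = y\frac{\mathrm{d}}{\mathrm{d}y}$.
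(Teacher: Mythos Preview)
Your proposal rests on a structural misreading of how $d_1$ enters the floor diagram count. You assert that a floor diagram of bidegree $(d_1,d_2)$ has ``exactly $d_1$ floors (or a quantity growing linearly in $d_1$)'' and then try to package the sum over $d_1$ as a trace of iterated transfer operators. This is not what happens. In the vertical floor diagrams of this paper each floor has genus $1$, so the number of floors is bounded by the fixed genus $g$; what varies with $d_1$ is not the number of floors but the \emph{weights} $w_\F$ assigned to the floors, subject to $\sum_\F w_\F = d_1$. Consequently there is no ``one-floor-per-unit-of-$d_1$'' transfer operator to iterate and trace, and the Bloch--Okounkov style argument you outline does not get off the ground in this form.

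The paper's proof is accordingly much more elementary than what you attempt. Since $g$, $d_2$, and the ramification profile are fixed, there are only finitely many marked floor diagrams up to the choice of floor weights. For a given diagram with floors $\F_1,\dots,\F_p$ the elevator contribution is a constant $W$, and the floor multiplicity $w_\F^{\val\F-1}\sigma_1(w_\F)$ makes the generating series over the weight distributions factor as
\[
W\prod_{i=1}^p\sum_{a\geqslant 1} a^{\val\F_i-1}\sigma_1(a)\,y^{a}
= W\prod_{i=1}^p D^{\val\F_i-1}G_2(y),
\]
which is quasi-modular since quasi-modular forms are closed under products and under $D=y\,\mathrm{d}/\mathrm{d}y$. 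Summing over the finitely many diagrams gives the result. Note also that the theorem is stated for $\TT F_{0,\alpha}$, i.e.\ $\delta=0$; for $\delta\neq 0$ one cannot fix the tangency profile while letting $d_1$ vary, since $\|\mu_0+\nu_0\|=\delta d_1+d_2$, so your concern about the monodromy twist is moot. A genuine trace formula approach along the lines of \cite{cooper2017fock} does exist, but it uses the projection to $E$ (the circle direction), not the vertical floor decomposition you are invoking.
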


\medskip

\textbf{Fock space approach.} Finally, we adapt the considerations of \cite{block2016fock} to our setting using our floor diagrams to relate the enumerative invariants to some matrix coefficients of some operators in a Heisenberg algebra acting on a Fock space, which is the content of Theorem \ref{theorem coefficient fock operator}. This allows one to express the generating series of the invariants considered in the paper.

\subsection{Further directions}

We now give several directions in which the result of the papers could be extended.


\textbf{Descendant invariants.} One further direction would be to add $\psi$-constraints, as in \cite{cavalieri2021counting}. It should be possible to adapt the setting, at least for degree $0$ line bundles, to compute descendant Gromov-Witten invariants of line bundles over an elliptic curve, and express their generating series in term of other operators on the Fock space. This corresponds to counting tropical curves with vertices which are not trivalent anymore. However, this computation remains formal since it only reduces the computation of descendant invariants to descendant invariants with a unique point condition, which remain to be computed. Moreover, the appearance of superabundant curves needs to be handled carefully.

\textbf{Real refined enumeration.} In some situations, namely in \cite{mikhalkin2017quantum} and \cite{blomme2021refinedreal}, the tropical refined invariants are connected to some refined real enumerative invariants. More precisely, we do a signed count of real rational curves with prescribed tangency conditions on the toric boundary of the considered variety, refined by the value of a suitable \textit{quantum index}. Unfortunately, except the fibers, the surfaces $\CC F$ do not contain any rational curves. However, some recent results of I. Itenberg and E. Shustin proved that a real refined invariance result for genus $1$ curves in some toric surfaces. One could look into a possible generalization of the result to the case of $\CC F$.

\subsection{Plan of the paper}

The paper is organized as follows. 
\begin{itemize}[label=-]
\item In the second section, we introduce the surfaces $\CC F$ and present enumerative problems in it, leading to the definition of the enumerative invariants.
\item In the third section, we translate the first section in the tropical setting, introducing the tropical spaces $\TT F$. We also define tropical curves inside $\TT F$.
\item In the fourth section, we prove the main abstract results of the paper by giving the correspondence theorem to relate the tropical setting to the complex setting, and the tropical refined invariance statement.
\item In the fifth section, we get to the computational point of view by introducing floor diagrams that provide an algorithm to compute the invariants introduced in the preceding sections, and give some consequences such as a Caporaso-Harris type formula.
\item The sixth section is devoted to some examples and consequences of the use of floor diagrams.
\item Last, we provide an alternate point of view on floor diagrams using operators on some Fock space .
\end{itemize}

\textit{Acknowledgements} The author thanks Hannah Markwig for suggesting the problem and numerous discussions during the week at Oberwolfach concerning floor diagrams, and Johannes Rau for making with her some nice images of tropical curves inside a cylinder. The author also thanks the other organizers of this conference R. Cavalieri and D. Ranganathan. The author thanks the anonymous referee for helpful remarks, Erwan Brugall\'e and Ilya Tyomkin for discussions about the dimension of the moduli space of curves.

\medskip

\textit{Notations} If $\mu=(\mu_1,\mu_2,\dots)$ is a partition of an integer $d$, meaning $\sum i\mu_i=d$, we write $\mu\vdash d$. We set
$$|\mu|=\sum \mu_i,\ \|\mu\|=\sum_i i\mu_i,\ I^\mu=\prod_i i^{\mu_i}.$$
If $\nu$ is a second partition, we say that $\nu\leqslant\mu$ if $\nu_i\leqslant\mu_i$ for each $i$, and we set
$$\bino{\mu}{\nu}=\prod_i \bino{\mu_i}{\nu_i}.$$

\section{Complex setting}
\label{section complex setting}

\subsection{Ruled surfaces over an elliptic curve}

We consider ruled surfaces (\textit{i.e.} $\CC P^1$-bundles) over an elliptic curve $\E$. They are constructed as the projectivization $\PP(\V)$ of a rank $2$ vector bundle $\V\to\E$. These ruled surfaces are classified in \cite{hartshorne2013algebraic}. We have the following alternative:
\begin{itemize}[label=-]
\item either $\V$ does split as the sum of two line bundles, in which case the ruled surface is of the form $\PP(\L\oplus\O)$ for some line bundle $\L$ on the elliptic curve $\E$,
\item or $\V$ does not split, and it leads to two distinct ruled surfaces up to isomorphism \cite{hartshorne2013algebraic}.
\end{itemize}

We refer to \cite{griffiths2014principles} for an introduction to line bundles and divisors on curves. Moreover, as elliptic curves are one-dimensional abelian varieties, we also refer to the section of  \cite{griffiths2014principles} for the construction of line bundles over abelian varieties as quotient of the trivial bundle on $(\CC^*)^n$ (here just $\CC^*$), writing the abelian variety as a quotient of $(\CC^*)^n$. Here, $\E$ can be written of the form $\CC^*/\langle\lambda\rangle$. Assume that $\E=\CC^*/\langle\lambda\rangle$. Let $\alpha\in\CC^*$ and $\delta\in\ZZ$. Then the quotient of $\CC^*\times\CC$ (resp. $\CC^*\times\CC^*$ or $\CC^*\times\CC P^1$) by the action generated by
$$(z,w)\longmapsto (\lambda z,\alpha z^\delta w),$$
is a line bundle (resp. $\CC^*$-bundle or $\CC P^1$-bundle) over $\E=\CC^*/\langle\lambda\rangle$. It is of degree $\delta$. Moreover, up to isomorphism, every line bundle is of this form.

\medskip

We specifically care about the ruled surfaces of the form $\PP(\L\oplus\O)$. The line bundle $\L$ can be assumed to be of degree $\delta\geqslant 0$. The ruled surfaces are then classified by the degree $\delta$ if $\delta>0$, or by $\L\in\mathrm{Pic}(\E)$ up to inversion if $\delta=0$. This is due to the fact that $\E$ acts by translation on itself, and the induced action is transitive on the set of degree $\delta$ line bundles if $\delta\neq 0$, and trivial on the set of degree $0$ line bundles. In the following, we assume that $\L$ is given of non-negative degree. If $\delta>0$, we denote the ruled surface by $\CC F_\delta$, and if $\delta=0$, we denote the surface associated to an element $\alpha\in\mathrm{Pic}(\E)\simeq\E$ by $\CC F_{0,\alpha}$. When we do not care about which surface, we just write $\CC F$.

\begin{rem}
By abuse of notation, if $\delta=0$, $\alpha\in\CC^*$ also denotes the line bundle in $\mathrm{Pic}(\E)\simeq\E$. Indeed, up to a change of basis of the form $(z,w z^k)$, we see that $\alpha$ only matters up to multiplication by $\lambda$, meaning $\alpha\in\CC^*/\langle\lambda\rangle=\E$.
\end{rem}

The second homology group of $\CC F$ is generated by the class $F$ of a fiber, and $E_0$ of the $0$-section $z\in\E\mapsto [0: 1]$. Notice that we also have an $\infty$-section $E_\infty$ given by $z\in\E\mapsto [l: 0]$. Their classes satisfy
$$E_\infty=E_0-\delta F.$$

\medskip

The Chern class $c_1(\CC F)\in H^2(\CC F,\ZZ)$ satisfies the following:
\begin{itemize}[label=-]
\item $c_1(\CC F)\cdot F=2$ since the tangent bundle of $\PP(\L\oplus \O)$ restricted to a fiber is the sum of the trivial bundle and tangent bundle of $\CC P^1$, which is isomorphic to $\O(2)$.
\item $c_1(\CC F)\cdot E_0=\delta$ since the tangent bundle of $\CC F$ restricted to $E_0$ is the sum of the tangent bundle of $\E$, which is trivial, and the line bundle $\L$, which has Chern class $\delta$.
\item Using the relation between $E_\infty$, $E_0$ and $F$, we get that $c_1(\CC F)\cdot E_\infty=-\delta$, which is unsurprisingly the Chern class of $\L^{-1}$, for which the $0$-section and $\infty$-section are switched.
\end{itemize}

\subsection{Curves in the total space and enumerative problems}

\subsubsection{Dimension of the moduli space of curves.} We now consider curves inside the ruled surfaces $\CC F$.

\begin{defi}
We say that a curve $\varphi:\CC C\to\CC F$, where $\CC C$ is a genus $g$ Riemann surface, is of \textit{bidegree} $(d_1,d_2)$ if it realizes the class $d_1 E_0+d_2 F$.
\end{defi}

\begin{rem}
Recall that the genus of a curve $\CC C$ is defined by the relation $\chi(\CC C)=2-2g$. If the curve is irreducible, its first Betti number is $2g$, but this is no longer the case if it is reducible.
\end{rem}

\begin{expl}
The sections of the line bundle $\L$ realize the class $E_0$ and are therefore curves of bidegree $(1,0)$. Meromorphic sections with $a$ poles realize the class $E_0+ aF$.
\end{expl}

For a class to be realizable, except $E_\infty$ itself, it has to intersect positively with $E_\infty$ and $F$, implying that $d_1,d_2\geqslant 0$. Conversely, as both $E_0$ and $F$ are realized by complex curves, the cone of realizable class is defined by $d_1,d_2\geqslant 0$.

\medskip

Let $\M(\CC F,(d_1,d_2))$ be the moduli space of curves of bidegree $(d_1,d_2)$ inside $\CC F$. If the Picard group of $\CC F$ was of rank $0$, it would just be $\PP(H^0(\CC F,\L_{(d_1,d_2)}))$, where $\L_{(d_1,d_2)}$ is a line bundle on $\CC F$ having Chern class $(d_1,d_2)$. as the Picard group is not of rank $0$ but of rank $1$, $\M(\CC F,(d_1,d_2))$ is a projective bundle over $\mathrm{Pic}(\E)$. Its dimension is given by the adjunction formula and is
$$\dim\M(\CC F,(d_1,d_2))=\frac{(\delta d_1+2d_2)(d_1+1)}{2}.$$
We now give the Proposition 15 from \cite{kleiman2013gottsche}, adapted to our setting: it is valid for any projective surface up to replacing the linear system by curves in a fixed class. It allows us to compute the dimension of the space of curves inside $\CC F$.

\begin{prop}\cite{kleiman2013gottsche}
Let $W$ be an irreducible subvariety of $\M(\CC F,(d_1,d_2))$, $\CCC_W\to W$ the tautological family of curves, $\widetilde{\CCC_W}\to\CCC_W$ the normalization, $f_W :\widetilde{\CCC_W}\to\CC F$ the natural morphism, and $0\in W$ a general closed point. Assume that the curve $\CCC_0$ is reduced.
\begin{enumerate}[label=(\roman*)]
\item There exists a natural embedding $T_0 W\hookrightarrow H^0(\widetilde{\CCC_0},\N_{f_0}/\N^\mathrm{tor}_{f_0})$, where $\N$ denotes the normal sheaf and $T_0 W$ the tangent space to $W$.
\item If $c_1(\CC F)\cdot\CCC\geqslant 1$ for any irreducible component $\CCC\subset\CCC_0$, then
$$\dim W\leqslant h^0(\widetilde{\CCC_0},\N_{f_0}/\N^\mathrm{tor}_{f_0})\leqslant c_1(\CC F)\cdot(d_1,d_2)+p_g(\CCC_0)-1.$$
\item If we have equality in (ii) and $c_1(\CC F)\cdot\CCC\geqslant 2$ for an irreducible component $\CCC$ of $\CCC_0$, then $\CCC$ is immersed.
\item If (ii) is an equality and $c_1(\CC F)\cdot\CCC\geqslant 2$ for any irreducible component $\CCC$ of $\CCC_0$, then $\N_{f_0}$ is invertible and the map from (i) is an isomorphism.
\end{enumerate}
\end{prop}

The proposition asserts that provided $\delta d_1+2d_2\geqslant 1$, which is always satisfied unless $d_1=d_2=0$, any irreducible component of the space of reduced genus $g$ curves of bidegree $(d_1,d_2)$ inside $\M(\CC F,(d_1,d_2))$ is of dimension at most $\delta d_1+2d_2+g-1$. Furthermore, if we consider only irreducible curves and we assume $\delta d_1+2d_2\geqslant 2$, which is always satisfied unless $\delta=0$ and $d_2=0$, or $\delta=1$ and $(d_1,d_2)=(1,0)$, curves are immersed. If $\delta=1$, curves of bidegree $(1,0)$ are just sections of the bundle, so the dimension count is given by Riemann-Roch and the fact that the curves are immersed is obvious. If $\delta=0$, Beside $E_0$ and $E_\infty$, it follows from Proposition \ref{theorem complex menelaus} that curves of bidegree $(d_1,0)$ can only occur if the line bundle $\L$ is torsion. If it is chosen generically, there are none and the assumption is always satisfied.

Furthermore, we have the strata of nodal curves of genus $g$ and bidegree $(d_1,d_2)$. By the adjunction formula, they have $\frac{(\delta d_1+2d_2)(d_1-1)}{2}+1-g$ nodes. Thus, each node imposing a codimension $1$ constraint, the dimension of the strata is at least
$$\dim\M(\CC F,(d_1,d_2))-\left(\frac{(\delta d_1+2d_2)(d_1-1)}{2}+1-g\right)=\delta d_1+2d_2+g-1.$$
In particular, there are components in the space of reduced genus $g$ curves of the expected dimension. There might be components of smaller dimension (although it is possible to show that there are in fact none) but they do not matter for the enumerative problem that we consider.

\begin{rem}
There might also be curves which are not nodal and can be deformed in a space of the expected dimension, but the computation that we do near the tropical limit proves that there are none either.
\end{rem}

\medskip

\subsubsection{Classical invariants.} We now have the following enumerative problem.

\begin{prob}
How many (irreducible) genus $g$ curves of bidegree $(d_1,d_2)$ passing through $\delta d_1 +2d_2 +g-1$ points in generic position are there ?
\end{prob}

If the constraints are chosen generically, as the number of constraints is chosen equal to the dimension, we expect a finite number of curves.

\begin{prop}
The number of genus $g$ curves of bidegree $(d_1,d_2)$ passing through a generic point configuration $\P$ of $\delta d_1 +2d_2 +g-1$ points is finite. It does not depend on the choice of $\P$ as long as it is generic.
\end{prop}

\begin{proof}
Taking into account all choices of $\P$, we get a family of subspaces $W_\P$ of $\M(\CC F,(d_1,d_2))$ indexed by $\P\in\CC F^{\delta d_1+2d_2+g-1}$: $W_\P$ is the subspace of curves passing through $\P$. Globally, the $W_\P$ cover $\M(\CC F,(d_1,d_2))$, and we thus have transversality with the subspace of reduced genus $g$ curves. Sard's lemma ensures that we also have transversality with a generic member of $(W_\P)$. As the dimensions have been chosen to be complementary, we get a finite number of curves passing through a generic $\P$.

The number of intersection points (\textit{i.e.} curves passing through $\P$ does not depend on the choice of $\P$ since it corresponds to the intersection number inside $\M(\CC F,(d_1,d_2))$ between the space of genus $g$ reduced curves and $W_\P$.
\end{proof}

Moreover, if the point configuration is chosen generically, no non-reduced curve can pass through them since its image varies in a space of strictly smaller dimension, and thus cannot meet the constraints by genericity. So the reduced count contains in fact all the curves. The obtained invariant is momentarily denoted by $N_{g,(d_1,d_2)}^{\CC F,\mathrm{cpx}}$.

\begin{prop}
If $\delta\neq 0$, the number $N_{g,(d_1,d_2)}^{\CC F_\delta,\mathrm{cpx}}$ does not depend on the choice of the elliptic curve $\E$. Furthermore, if $\delta=0$, the number does not depend on the choice of $\alpha\in\mathrm{Pic}(\E)$ as long as it is chosen generically, nor the choice of $\E$. Thus, the numbers only depend on $\delta$, and are denoted $N_{g,(d_1,d_2)}^{\delta,\mathrm{cpx}}$.
\end{prop}

\begin{proof}
The argument from Proposition 15 in \cite{kleiman2013gottsche} also apply for any characteristic $0$ field. In particular, they apply for families by considering the field of (converging) Puiseux series $\CC\{\{t\}\}$, and then specializing to the fibers. It suffices to take families of elliptic curves of the form $\CC^*/\langle \lambda(t) t^l\rangle$, which can either be seen as families of complex elliptic curves, or as curves over the field of Puiseux series $\CC\{\{t\}\}$.

The surfaces $\CC F_{0,\alpha}$ require a little more care since there might be curves of bidegree $(d_1,0)$, but this class does not satisfy the assumptions of Proposition 15 in \cite{kleiman2013gottsche}. Curves different from $E_0$ and $E_\infty$ occur in such a class only if $\alpha$ is a $d_1$-torsion element in $\E\simeq\mathrm{Pic}(\E)$. This follows from Proposition \ref{theorem complex menelaus} below as we have $D(\CC C)=0$ for such a curve. Moreover, such curves may appear as irreducible components of curves of other bidegrees. If $(d_1,d_2)$ is fixed, the set of $\alpha\in\CC^*$ for which the classes $(k,0)$, $k\leqslant d$ is discrete: it corresponds to the preimage in $\CC^*$ of torsion elements up to $d_1$ in $\mathrm{Pic}(\E)$. Thus, we can assume that $\alpha$ is not one of them. Moreover, this is still true for $\alpha'$ in a small neighbourhood of $\alpha$.

Finally, we can put the surfaces $\CC F_{0,\alpha}$ in a family by considering the construction of $\CC F_{0,\alpha}$ with $\alpha$ as a parameter: we quotient $\CC^*\times\CC P^1\times\CC^*$ by the following action,
$$(z,w,\alpha)\longmapsto (\lambda z,\alpha w,\alpha).$$
The quotient has a well-defined map to $\CC^*$ by mapping $(z,w,\alpha)$ to $\alpha$, and it is stable by the action. The preimage of $\alpha\in\CC^*$ is $\CC F_{0,\alpha}$. Therefore, the intersection number $N_{g,(d_1,d_2}^{\CC F_{0,\alpha},\mathrm{cpx}}$ remains the same for $\alpha'$ in a small neighbourhood of $\alpha$, and thus for generic $\alpha$.
\end{proof}

\begin{rem}
The use of families of the form $\CC^*/\langle\lambda t^l\rangle$ is interesting since it allows us to pass to the tropical limit, where a correspondence statement allows us to compute the invariants.
\end{rem}

\begin{rem}
These invariants also appear in \cite{grafnitz2022proper} as log Gromov-Witten invariants through degeneration of surfaces.
\end{rem}

In total, we get a family of invariants $N_{g,(d_1,d_2)}^{\delta,\mathrm{cpx}}$ for $\delta\geqslant 0$ by counting irreducible curves. Counting reducible curves as well, we denote the invariant by $N_{g,(d_1,d_2)}^{\delta,\mathrm{cpx},\bullet}$.

\begin{rem}
The assumption that $\alpha\in\CC^*$ is generic is necessary to get invariance for the following reason. If the parameter $\alpha$ becomes torsion, some irreducible curves might become reducible with some irreducible component being of bidegree $(k,0)$. These components fail to deform for generic $\alpha$, and thus the invariance also fails. The invariance could be recovered by carefully counting the curves having irreducible components of bidegree $(k,0)$.
\end{rem}

\subsubsection{Relative invariants.} Let $\varphi:\CC C\to\CC F$ be a curve of bidegree $(d_1,d_2)$. Intersecting $\varphi(\CC C)$ with the two distinguished divisors $E_0$ and $E_\infty$, we get respectively $\delta d_1+d_2$ and $d_2$ intersection points counted with multiplicity. The number of intersection points of each tangency index give two partitions $\mu_0\vdash \delta d_1+d_2$ and $\mu_\infty\vdash d_2$. We then say that $\CC C$ has tangency profile $(\mu_0,\mu_\infty)$. Furthermore, as $E_0$ and $E_\infty$ are both identified with the elliptic curve $\E$, we get two divisors
$$\sum_{i}\sum_{1\leqslant j\leqslant\mu_{0i}}ip_{0ij} \text{ and }\sum_{i}\sum_{1\leqslant j\leqslant\mu_{\infty i}}ip_{\infty ij},$$
for some distinct points $p_{0ij},p_{\infty ij}\in\E$. In particular, for each curve $\varphi:\CC C\to\CC F$, we get a divisor $D(\CC C)=\sum_{ij} i(p_{0ij}-p_{\infty ij})$ on $\E$. In fact, we have the following Proposition.

\begin{prop}\label{theorem complex menelaus}
If $D_0$ is some divisor on $\E$ such that $\L\simeq\L_{D_0}$, and $\varphi:\CC C\to\CC F$ is some curve of bidegree $(d_1,d_2)$, with $d_1\neq 0$, then $D(\CC C)$ is equivalent to $d_1 D_0$ on $\E$.
\end{prop}

\begin{proof}

If $d_1=1$, it is exactly the fact that the divisor of a meromorphic section of $\L$ satisfies $\L\simeq\L_D$. Another way to view it is as follows: let $s$ and $s'$ be two meromorphic sections of the line bundle. Any linear combination $\alpha s+\beta s'$ of $s$ and $s'$ is still a meromorphic section, and its divisor depends only on $(\alpha,\beta)$ up to multiplication by a non-zero constant. Thus, taking their divisor, we get a map from $\CC P^1$ to the Jacobian of the curve, that associates to $[\alpha:\beta]$ the divisor of the section $\alpha s+\beta s'$. As maps from the projective line to a complex torus are constant, we get that $s$ and $s'$ have equivalent divisors.

If we now take any curve inside the total space $\CC F$, it can be considered as a $d_1$-multisection, meaning there are $d_1$ points in the fiber over each point in the base. If $s$ is a meromorphic section of the line bundle, we can still consider linear combinations between the multisection and $s$, except these are now $d_1$-multisection. Varying the coefficients and taking the divisor, we similarly get a map from $\CC P^1$ to the Jacobian of the curve. As these maps are constant, we get that the divisor of the multisection is equivalent to the divisor of $s$ viewed as a $d_1$-multisection, which is $d_1 D$.
\end{proof}

\begin{rem}
In the case of the trivial bundle $\E\times\CC P^1$ sections are meromorphic functions. The statement then amounts to say that the divisor of a meromorphic function is principal.
\end{rem}

Imposing a tangency of order $i$ at a fixed point of $E_0$ (resp. $E_\infty$) is a codimension $i$ condition, while imposing a tangency of order $i$ somewhere on $E_0$ (resp. $E_\infty$) is codimension $i-1$. For the second enumerative problem, let $\mu_0+\nu_0\vdash \delta d_1+d_2$ and $\mu_\infty+\nu_\infty\vdash d_2$ be partitions. The number of solutions to the following enumerative problem counts curves with fixed tangency profile with the divisor $E_0+E_\infty$.

\begin{prob}
How many genus $g$ curves of bidegree $(d_1,d_2)$, have $\mu_{0i}$ (resp. $\mu_{\infty i}$) intersection points of tangency order $i$ at fixed points of $E_0$ (resp. $E_\infty$) and $\nu_{0i}$ (resp. $\nu_{\infty i}$) more intersection of tangency order $i$ with $E_0$ (resp. $E_\infty$) at non-fixed points, and pass through $|\nu_0|+|\nu_\infty|+g-1$ points in generic position in $\CC F$ ?
\end{prob}

As in the case of point constraints, the number of such curves does not depend either on the choice of the points as long as it is generic, the choice of the elliptic curve, and the choice of generic $\alpha\in\mathrm{Pic}(\E)$ if $\delta=0$. The number of these curves is denoted by $N^{\delta,\mathrm{cpx},\bullet}_{g,(d_1,d_2)}(\mu_0,\mu_\infty,\nu_0,\nu_\infty)$ for the reducible count, and $N^{\delta,\mathrm{cpx}}_{g,(d_1,d_2)}(\mu_0,\mu_\infty,\nu_0,\nu_\infty)$ for the irreducible count.

\section{Tropical line bundle over an elliptic curve}
\label{section tropical line bundle elliptic curve}

We now adapt the previous section to the tropical setting, defining the ruled surfaces over a tropical elliptic curve $\TT E$ which are obtained from tropical line bundles. The reader is assumed to be familiar with some notions of tropical geometry. We refer to \cite{brugalle2014bit} for a introduction to tropical geometry, and to sections $3$ and $4$ of \cite{mikhalkin2008tropical} for more details on divisors and line bundles over tropical curves.

\subsection{Total space of a tropical line bundle}

\subsubsection{Tropical elliptic curve.} We take the following definition of a tropical elliptic curve.

\begin{defi}
A tropical elliptic curve $\TT E$ is a quotient $\RR/l\ZZ$, where $l$ is some positive real number.
\end{defi}

As in the complex setting, tropical elliptic curves are also abelian groups. Topologically, a tropical elliptic curve is homeomorphic to $S^1$, just as any complex elliptic curve is homeomorphic to a torus $(S^1)^2$. Moreover, as complex elliptic curves are classified by a complex number $\tau$ (up to a $SL_2(\ZZ)$ action), tropical elliptic curves are classified by their length $l$.

\medskip

\subsubsection{The tropical cylinders.} We adopt a construction mimicking the construction of line bundles over a complex torus done in \cite{griffiths2014principles}. We consider the $\ZZ$-action on $\RR^2$ generated by the following affine diffeomorphism:
$$\varphi:(x,y)\longmapsto(x+l,y+\delta x-\alpha),$$
where $\delta\in\ZZ$ and $\alpha\in\RR$. The diffeomorphism is an affine map whose linear part lies in $GL_2(\ZZ)$. Thus, the quotient space inherits a lattice structure from the standard lattice structure of $\RR^2$. As the action onto the first coordinate is just a translation, projecting onto the first coordinate, we get a map
$$\RR^2/\langle\varphi\rangle\longrightarrow\RR/l\ZZ.$$
This maps makes the quotient space $\RR^2/\langle\varphi\rangle$ into a $\RR$-bundle over the tropical elliptic curve $\TT E=\RR/l\ZZ$.

\begin{rem}
Changing $\RR^2$ with $\RR\times\TT$ (where $\TT=\RR\cup\{-\infty\}$) or $\RR\times\TT P^1$ (where $\TT=\RR\cup\{-\infty,+\infty\}$) with the same action, we would get a line bundle or $\TT P^1$-bundle over the elliptic curve $\RR/l\ZZ$. This corresponds to the construction of \cite{griffiths2014principles} leading to a $\CC^*$, $\CC$ or $\CC P^1$-bundle according to the fiber that we choose.
\end{rem}

Up to reversing the vertical direction inside $\RR^2$, we can assume that $\delta\geqslant 0$. Then,
\begin{itemize}[label=-]
\item If $\delta>0$, using a horizontal translation inside $\TT E$ and $\RR^2$, one can assume that $\alpha=0$. The total space is denoted by $\TT F_\delta$, and we write $\varphi_\delta:(x,y)\mapsto(x+l,y+\delta x)$ when necessary.
\item If $\delta=0$, using a basis of the form $(e_1+ke_2,e_1)$ for some $k\in\ZZ$ instead of the canonical basis of $\RR^2$, the parameter $\alpha$ of the translation $\varphi$ becomes $\alpha-kl$. Thus, one can assume that $0\leqslant\alpha<l$. The total space is denoted by $\TT F_{0,\alpha}$, and we write $\varphi_{0,\alpha}:(x,y)\mapsto(x+l,y-\alpha)$ when necessary. The parameter $\alpha$ can thus be seen as an element of $\TT E$.
\end{itemize}

Topologically, the spaces that we obtain are cylinders. They differ by the lattice structure that they inherit from the standard lattice structure of $\RR^2$.

\begin{rem}
One other way to obtain the cylinders would be the following. Consider a cover of $\TT E=\RR/l\ZZ$ by open set $(U_i)$, and a cocycle of affine functions on $U_i\cap U_j$, thus defining a tropical line bundle. Then glue the trivial bundles $U_i\times\RR$ (resp. $U_i\times\TT$, $U_i\times\TT P^1$) together over the $U_i\cap U_j$ using the cocycle. This constructs the total space of the $\RR$-bundle (resp. line bundle, $\TT P^1$-bundle).
\end{rem}

\begin{rem}
The cylinders $\TT F$ are not compact. They yet correspond to the tropicalization of the surfaces $\CC F_\delta$ just like the usual tropicalization of $\CC P^2$ is $\RR^2$ with the data of the unbounded directions corrsponding to its toric divisors. In our case, the two directions are the top and bottom directions of the cylinders, corresponding to the divisors $E_0$ and $E_\infty$ of $\CC F_\delta$.
\end{rem}

Concretely, taking a fundamental domain, the cylinders $\TT F$ can be seen as follows.
\begin{itemize}[label=-]
\item If $\delta>0$, the cylinder $\TT F_\delta$ is the quotient of the strip $[0;l]\times\RR$ by the identification of both sides: $(0,y)\sim(l,y)$, but the lattice structure has monodromy $\left(\begin{smallmatrix} 1 & 0 \\ -\delta & 1 \\ \end{smallmatrix}\right)$: a straight line having slope $(1,p)$ crossing the right side of the strip comes back from the left side with slope $(1,p-\delta)$.
\item If $\delta=0$, the cylinder $\TT F_{0,\alpha}$ is obtained identifying both sides of the strip $[0;l]\times\RR$ by a translation: $(0,y)\sim(l,y-\alpha)$. The slope of a line crossing the boundary does not change.
\end{itemize}
We refer to the next section for pictures of the cylinders with curves inside it, expliciting the lattice structure.

\subsection{Tropical curves in $\TT F$}

\subsubsection{Abstract tropical curves.} An \textit{irreducible abstract tropical curve} is a connected metric graph with unbounded edges called \textit{ends} that have infinite length. The number of neighbours of a vertex of $\Gamma$ is called its \textit{valence}. Edges adjacent to vertices of valence $1$ are required to have infinite length, and we remove the $1$-valent vertices from $\Gamma$. The other edges are required to have finite length and are called \textit{bounded edges}. The \textit{genus} of an abstract tropical curve is defined by $g=1-\chi(\Gamma)$. If $\Gamma$ is connected, it is equal to its first Betti number.

\begin{rem}
Due to the enumerative problem that we study, the tropical curves considered in the paper do not have genus at their vertices and we do not include that in the definition. Such a generalization would be needed if one tried to consider for instance descendant invariants, as in \cite{cavalieri2021counting}.
\end{rem}

\subsubsection{Parametrized tropical curves.} In all what follows, $\TT F$ is assumed to be either some $\TT F_\delta$, or some $\TT F_{0,\alpha}$.

\begin{defi}
A parametrized tropical curve inside $\TT F$ is a map $h:\Gamma\to\TT F$ where $\Gamma$ is an abstract tropical curve, and
\begin{itemize}[label=-]
\item $h$ is affine with integer slope on the edges of $\Gamma$,
\item at each vertex, one has the balancing condition: the sum of the outgoing slopes of $h$ is $0$.
\end{itemize}
\end{defi}

Here, integer slope means that the slope of $h$ belongs to the tangent lattice. Concretely, drawing the curve inside $[0;l]\times\RR$, it means that the derivative lies in $\ZZ^2$. The weight of an edge $e$ is the integer length of the slope of $h$ on $e$, which does not depend on the chosen chart. As the vertical direction $(0,1)$ is unchanged by the gluing maps, we can speak about the vertical direction.

\medskip

\subsubsection{Bidegree of tropical curves inside $\TT F$.}

\begin{defi}
We say that a parametrized curve $h:\Gamma\to\TT F$ is of \textit{bidegree} $(d_1,d_2)$ if it has $d_2$ unbounded edges (counted with weight) in the direction $(0,1)$, and $d_1$ intersection points (counted with multiplicity equal to the horizontal coordinate of their slope) with a vertical line.
\end{defi}

Let $h:\Gamma\to\TT F$ and $h':\Gamma'\to\TT F$ be two parametrized tropical curves of respective bidegrees $(d_1,d_2)$ and $(d'_1,d'_2)$. Assume they are transverse to each other. For any intersection point $p$ of $\Gamma$ and $\Gamma'$, the intersection index $|\det(\partial_p h,\partial_p h')|$ is well-defined since it does not depend on the choice of the integer affine chart of $\TT F$. Thus, we can speak of the intersection index between $\Gamma$ and $\Gamma'$ at $p$, and define their intersection number by adding the contributions of the various intersection points. Using the balancing condition, similarly to the tropical B\'ezout theorem in the plane, we get that the intersection number only depends on the bidegrees of the two curves, and is equal to $\delta d_1 d'_1+d_1 d'_2+d_2 d'_1$. Therefore, the bidegree of a curve is defined by its intersection number with the $\infty$-section and the class of the fiber. The balancing condition ensures that it does not depend on the chosen vertical fiber.

\begin{rem}
Alternatively, one could consider the classes realized by the parametrized tropical curve $h:\Gamma\to\TT F$ inside the tropical homology group $H_{1,1}(\TT F,\ZZ)$. To do this one should consider the compact space that is a $\TT P^1$-bundle over $\TT E$ rather than a $\RR$-bundle. The homology group is then generated by the class $F$ of a fiber, which is a vertical line, and some section, for instance the $0$-section, represented by the bottom boundary component of the cylinder. A curve is of bidegree $(d_1,d_2)$ if it realizes the class $d_1 E_0+d_2 F$.
\end{rem}

\begin{lem}
Counted with multiplicity, a curve of bidegree $(d_1,d_2)$ has $\delta d_1+d_2$ (counted with weights) unbounded ends in the direction $(0,-1)$.
\end{lem}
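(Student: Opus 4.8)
The idea is to apply the cutting procedure to turn $h:\Gamma\to\TT F_\delta$ into an honest tropical curve in $\RR^2$ and then invoke the planar balancing condition. Write $w$ for the total weight of the unbounded ends of $h$ in the direction $(0,1)$; the goal is $w=d_2$.

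First I would pick a vertical fiber $F_0$ generic enough that it is disjoint from the vertices of $\Gamma$ and transverse to all of its edges (the unbounded ends of $\Gamma$ are vertical, so they either lie in a fiber or miss $F_0$). Cutting $\TT F_\delta$ along $F_0$ and prolonging to infinity the edges meeting $F_0$ produces a parametrized tropical curve $\widetilde h:\widetilde\Gamma\to\RR^2$; the balancing condition at the old vertices is unaffected because the charts of $\TT F_\delta$ are integer-affine, and no new vertices are introduced. The unbounded ends of $\widetilde\Gamma$ come in three families: the ends of $\Gamma$ of direction $(0,-1)$, of total weight $\delta d_1+d_2$ by definition of the bidegree; the ends of $\Gamma$ of direction $(0,1)$, of total weight $w$; and, for each intersection point of $\Gamma$ with $F_0$, a pair of new unbounded ends. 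Using that the linear part of the gluing map $(l+x,y)\mapsto(x,y-\delta x)$ sends a slope $(u,v)$ to $(u,v-\delta u)$ (and is the identity when $\delta=0$), a direct check shows that the pair of new ends created at a crossing whose edge has horizontal coordinate $u_i$ has slopes of the form $(u_i,v_i)$ and $(-u_i,\delta u_i-v_i)$, so that their sum is $(0,\delta|u_i|)$ whichever horizontal direction the edge crosses in.

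Then I would sum the balancing relations of $\widetilde\Gamma$ over all its vertices: the bounded edges cancel in pairs, leaving the identity that the sum of the slopes of all unbounded ends of $\widetilde\Gamma$ is zero. Reading off the second coordinate gives $-(\delta d_1+d_2)+w+\delta\sum_i|u_i|=0$. Since $\sum_i|u_i|$ is exactly the intersection number of $\Gamma$ with the vertical line $F_0$, which equals $d_1$ by definition of the bidegree, we conclude $w=d_2$. Alternatively, $w$ is the intersection number of $\Gamma$ with the section $E_\infty=E_0-\delta F$, and using the intersection numbers of the previous subsection $w=(d_1E_0+d_2F)\cdot(E_0-\delta F)=\delta d_1-\delta d_1+d_2=d_2$.

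The only delicate point is the bookkeeping of signs: one must verify that the two ends born at a crossing always contribute with the same (positive) sign $\delta|u_i|$ to the second coordinate of the total, independently of the direction in which the edge traverses $F_0$, and that this matches the definition of $d_1$ as $\sum_i|u_i|$.
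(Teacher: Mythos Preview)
Your proof is correct and follows essentially the same approach as the paper: cut along a vertical fiber, use that the sum of outgoing slopes of the resulting planar tropical curve vanishes, and observe that each pair of new ends contributes $(0,\delta u_i)$ with $\sum u_i=d_1$; the paper also mentions the alternative via the intersection with $E_\infty$. Your discussion of the sign bookkeeping is more explicit than the paper's, which simply takes $(u_i,v_i)$ to be the slope of the right-going end (so $u_i>0$) and writes $\sum u_i=d_1$ directly.
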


\begin{proof}
It is just the intersection number of the curve with the class of the $0$-section. Alternatively, use the cutting procedure from Section \ref{section cutting procedure} to get a tropical curve inside $\RR^2$. The balancing condition ensures that the sum of the outgoing slopes of the unbounded ends add up to $0$. there are by assumption $d_2$ ends in the direction $(0,11)$. The ends on the right have slope $(u_i,v_i)$, with $\sum u_i= d_1$, and each one of them is associated to an end on the left with slope $(-u_i,\delta u_i-v_i)$. Thus, all add up to $(0,\delta \sum u_i)=(0,\delta d_1)$. The result follows.
\end{proof}

We now define the tangency profile of a parametrized tropical curve of bidegree $(d_1,d_2)$.

\begin{defi}
Let $\|\mu_0\|= \delta d_1+d_2$ and $\|\mu_\infty\|= d_2$ be partitions. A parametrized tropical curve of bidegree $(d_1,d_2)$ is said to have ramification profile $(\mu_0,\mu_\infty)$ if it has $\mu_{0i}$ ends of slope $(0,-i)$, and $\mu_{\infty i}$ ends of slope $(0,i)$.
\end{defi}

\subsubsection{Examples of tropical curves in cylinders.} We now give several examples and illustrations of tropical curves living in the spaces $\TT F_{0,\alpha}$, $\TT F_1$ and $\TT F_2$.

\begin{expl}
\begin{itemize}[label=-]
\item The tropical graph of a section of the line bundle realizes the class $E_0$. It has exactly one intersection point with every fiber, $\delta$ unbounded ends going down, and none going up. By tropical graph, we mean the usual graph with vertical ends added at each corner point so that the balancing condition is satisfied.
\item The tropical graph of a meromorphic section of the line bundle realizes the class $E_0+d_2 F$, where $d_2$ is the number of poles of the section.
\end{itemize}
\end{expl}

\begin{expl}
On Figure \ref{figure example ExP1} one can see three examples of tropical curves inside the total space of a degree $0$ line bundle. The pictures are drawn inside the strip $[0;l]\times\RR$ both whose sides are identified.
\begin{itemize}[label=-]
\item The examples $(a)$ and $(b)$ take place in the trivial bundle $\TT E\times\RR$ and depict two genus $1$ curves of respective bidegrees $(2,2)$ and $(1,2)$.
\item On $(c)$ is drawn a genus $1$ and bidegree $(1,1)$ curve in some $\TT F_{0,\alpha}$ for some non-zero $\alpha$. The identification between the two sides is done with some vertical translation.
\end{itemize}
Both examples $(b)$ and $(c)$ can be seen as tropical graphs of meromorphic sections of the respective line bundles. Notice that here, the slope does not change when crossing the boundary and coming back from the other side.
\end{expl}

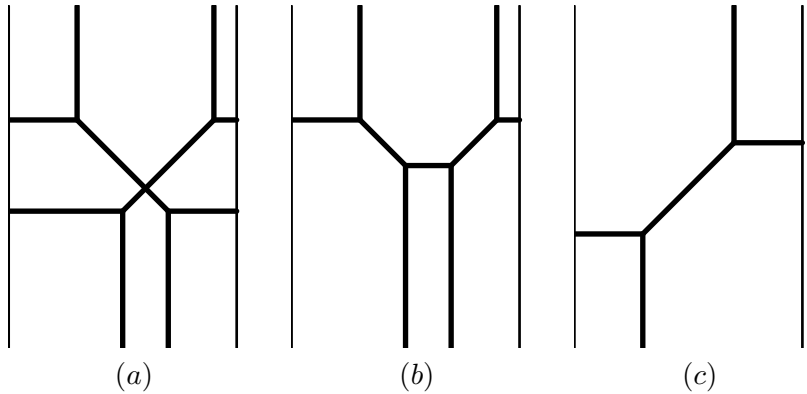
\begin{figure}
\begin{center}
\begin{tabular}{ccc}
\begin{tikzpicture}[line cap=round,line join=round,>=triangle 45,x=0.3cm,y=0.3cm]
\clip(0,0) rectangle (11,15);
\draw [line width=1pt] (0,0)-- (0,15);
\draw [line width=1pt] (10,0)-- (10,15);

\draw [line width=2pt] (0,10)-- (3,10);
\draw [line width=2pt] (3,10)-- (7,6);
\draw [line width=2pt] (7,6)-- (10,6);
\draw [line width=2pt] (0,6)-- (5,6);
\draw [line width=2pt] (5,6)-- (9,10);
\draw [line width=2pt] (9,10)-- (10,10);

\draw [line width=2pt] (9,10)-- (9,15);
\draw [line width=2pt] (3,10)-- (3,15);
\draw [line width=2pt] (7,6)-- (7,0);
\draw [line width=2pt] (5,6)-- (5,0);

\begin{scriptsize}

\end{scriptsize}
\end{tikzpicture}
&
\begin{tikzpicture}[line cap=round,line join=round,>=triangle 45,x=0.3cm,y=0.3cm]
\clip(0,0) rectangle (11,15);
\draw [line width=1pt] (0,0)-- (0,15);
\draw [line width=1pt] (10,0)-- (10,15);

\draw [line width=2pt] (0,10)-- (3,10);
\draw [line width=2pt] (3,10)-- (5,8);
\draw [line width=2pt] (5,8)-- (7,8);
\draw [line width=2pt] (7,8)-- (9,10);
\draw [line width=2pt] (9,10)-- (10,10);

\draw [line width=2pt] (9,10)-- (9,15);
\draw [line width=2pt] (3,10)-- (3,15);
\draw [line width=2pt] (7,8)-- (7,0);
\draw [line width=2pt] (5,8)-- (5,0);

\begin{scriptsize}

\end{scriptsize}
\end{tikzpicture}
&
\begin{tikzpicture}[line cap=round,line join=round,>=triangle 45,x=0.3cm,y=0.3cm]
\clip(0,0) rectangle (11,15);
\draw [line width=1pt] (0,0)-- (0,15);
\draw [line width=1pt] (10,0)-- (10,15);

\draw [line width=2pt] (0,5)-- (3,5);
\draw [line width=2pt] (3,5)-- (7,9);
\draw [line width=2pt] (7,9)-- (10,9);

\draw [line width=2pt] (7,9)-- (7,15);
\draw [line width=2pt] (3,5)-- (3,0);

\begin{scriptsize}

\end{scriptsize}
\end{tikzpicture} \\
$(a)$ & $(b)$ & $(c)$ \\
\end{tabular}
\caption{\label{figure example ExP1}Examples of tropical curves inside $\TT E\times\RR$ ($(a)$ and $(b)$), and inside $\TT F_{0,\alpha}$ for some non zero $\alpha$ ($(c)$).}
\end{center}
\end{figure}

\begin{expl}
On Figure \ref{figure section curve TF1} we draw two sections of a degree $1$ bundle. We obtain thus curves in $\TT F_1$. On $(a)$, the identification between the two sides of the strip is done with a non-zero vertical translation. The slope changes when the boundary is crossed. This comes from the fact that the total space is constructed by gluing the left and right small strips between the whole and dotted lines with an affine map which is not a mere translation: through the gluing, the dashed line on the right is sent to a small part on the left horizontal part of the curve.

On $(b)$, it is the same curve but drawn in a chart such that the identification between the two sides of the strip is done without translation (but still with a change of slope when crossing it). As a consequence, the unique unbounded end and the unique vertex live on the boundary of the chart. One can check that the balancing condition is indeed satisfied.
\end{expl}

\begin{figure}[h]
\begin{center}
\begin{tabular}{cc}
\begin{tikzpicture}[line cap=round,line join=round,>=triangle 45,x=0.3cm,y=0.3cm]
\clip(0,0) rectangle (11,15);
\draw [line width=1pt] (0,0)-- (0,15);
\draw [line width=1pt] (10,0)-- (10,15);
\draw [line width=1pt, dotted] (11,0)-- (11,15);
\draw [line width=1pt, dotted] (1,0)-- (1,15);

\draw [line width=2pt] (0,5)-- (5,5);
\draw [line width=2pt] (5,5)-- (10,10);
\draw [line width=2pt, dashed] (10,10)-- (11,11);
\draw [line width=2pt] (5,5)-- (5,0);

\begin{scriptsize}

\end{scriptsize}
\end{tikzpicture}
&
\begin{tikzpicture}[line cap=round,line join=round,>=triangle 45,x=0.3cm,y=0.3cm]
\clip(-1,0) rectangle (11,15);
\draw [line width=1pt] (0,0)-- (0,15);
\draw [line width=1pt] (10,0)-- (10,15);

\draw [line width=2pt] (0,5)-- (10,5);

\draw [line width=2pt, dashed] (0,5)-- (-1,6);
\draw [line width=2pt, dashed] (10,5)-- (11,6);

\draw [line width=2pt] (0,0)-- (0,5);
\draw [line width=2pt] (10,0)-- (10,5);

\begin{scriptsize}

\end{scriptsize}
\end{tikzpicture}\\
$(a)$ & $(b)$ \\
\end{tabular}
\caption{\label{figure section curve TF1} Examples of sections inside $\TT F_1$. On $(a)$, the chart has not been chosen so that the identification between the two sides of the strip is identity. On $(b)$ it is the case.}
\end{center}
\end{figure}
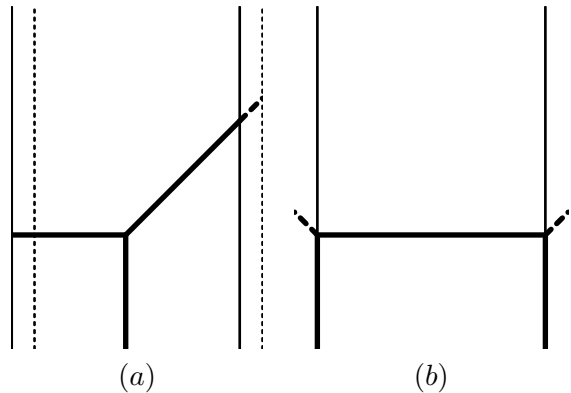

\begin{expl}
On Figure \ref{figure section curve TF1 2}, we draw a genus $1$ bidegree $(1,2)$ and a genus $2$ bidegree $(2,2)$ curve in $\TT F_1$, and a genus $1$ bidegree $(1,0)$ curve inside $\TT F_2$. On can still observe the change of slope when an edge crosses the boundary. In the case of $\TT F_1$, an edge of slope $1$ which crosses the boundary on the right comes back from the left with slope $0$. In the case of $\TT F_2$, and edge of slope $1$ which crosses on the right comes back on the left with new slope $-1$.
\end{expl}

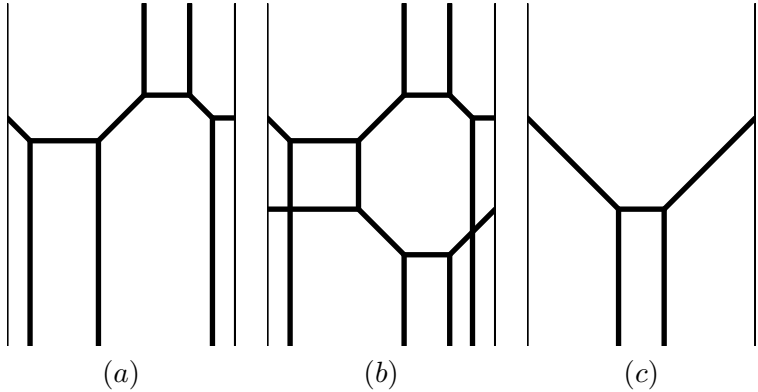
\begin{figure}
\begin{center}
\begin{tabular}{ccc}
\begin{tikzpicture}[line cap=round,line join=round,>=triangle 45,x=0.3cm,y=0.3cm]
\clip(0,0) rectangle (10,15);
\draw [line width=1pt] (0,0)-- (0,15);
\draw [line width=1pt] (10,0)-- (10,15);

\draw [line width=2pt] (0,10)-- (1,9);
\draw [line width=2pt] (1,9)-- (4,9);
\draw [line width=2pt] (4,9)-- (6,11);
\draw [line width=2pt] (6,11)-- (8,11);
\draw [line width=2pt] (8,11)-- (9,10);
\draw [line width=2pt] (9,10)-- (10,10);

\draw [line width=2pt] (1,9)-- (1,0);
\draw [line width=2pt] (4,9)-- (4,0);
\draw [line width=2pt] (6,11)-- (6,15);
\draw [line width=2pt] (8,11)-- (8,15);
\draw [line width=2pt] (9,10)-- (9,0);
\begin{scriptsize}

\end{scriptsize}
\end{tikzpicture}
&
\begin{tikzpicture}[line cap=round,line join=round,>=triangle 45,x=0.3cm,y=0.3cm]
\clip(0,0) rectangle (10,15);
\draw [line width=1pt] (0,0)-- (0,15);
\draw [line width=1pt] (10,0)-- (10,15);

\draw [line width=2pt] (0,10)-- (1,9);
\draw [line width=2pt] (1,9)-- (4,9);
\draw [line width=2pt] (4,9)-- (6,11);
\draw [line width=2pt] (6,11)-- (8,11);
\draw [line width=2pt] (8,11)-- (9,10);
\draw [line width=2pt] (9,10)-- (10,10);

\draw [line width=2pt] (0,6)-- (4,6);

\draw [line width=2pt] (4,6)-- (6,4);
\draw [line width=2pt] (8,4)-- (10,6);
\draw [line width=2pt] (6,4)-- (8,4);

\draw [line width=2pt] (1,9)-- (1,0);
\draw [line width=2pt] (4,9)-- (4,6);
\draw [line width=2pt] (6,11)-- (6,15);
\draw [line width=2pt] (8,11)-- (8,15);
\draw [line width=2pt] (9,10)-- (9,0);

\draw [line width=2pt] (6,4)-- (6,0);
\draw [line width=2pt] (8,4)-- (8,0);
\begin{scriptsize}

\end{scriptsize}
\end{tikzpicture}
&
\begin{tikzpicture}[line cap=round,line join=round,>=triangle 45,x=0.3cm,y=0.3cm]
\clip(0,0) rectangle (10,15);
\draw [line width=1pt] (0,0)-- (0,15);
\draw [line width=1pt] (10,0)-- (10,15);

\draw [line width=2pt] (0,10)-- (4,6);
\draw [line width=2pt] (4,6)-- (6,6);
\draw [line width=2pt] (6,6)-- (10,10);
\draw [line width=2pt] (4,6)-- (4,0);
\draw [line width=2pt] (6,6)-- (6,0);

\begin{scriptsize}

\end{scriptsize}
\end{tikzpicture}\\
$(a)$ & $(b)$ & $(c)$ \\
\end{tabular}
\caption{\label{figure section curve TF1 2}Examples of tropical curves inside $\TT F_1$ ($(a)$ and $(b)$) and in $\TT F_2$ $(c)$.}
\end{center}
\end{figure}

\begin{expl}\label{example superabundant curves}
Notice that in the case of $\TT F_{0,\alpha}$, if $\alpha\in\QQ l$, there exist curves that do not intersect $E_0$ nor $E_\infty$. They are elliptic curves that go around the cylinder direction at least once with the right slope. In Figure \ref{figure superabundant curve}, the unique edge has slope $\frac{1}{2}$, and the two sides of the strip are identified with a translation by $l/2$. Those curves are superabundant. Such curves are called \textit{superabundant loops}. They are characterized in Proposition \ref{proposition superabundant loops}.
\end{expl}


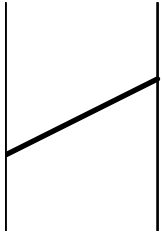
\begin{figure}
\begin{center}
\begin{tikzpicture}[line cap=round,line join=round,>=triangle 45,x=0.2cm,y=0.2cm]
\clip(0,0) rectangle (11,15);
\draw [line width=1pt] (0,0)-- (0,15);
\draw [line width=1pt] (10,0)-- (10,15);
\draw [line width=2pt] (0,5)-- (10,10);
\begin{scriptsize}

\end{scriptsize}
\end{tikzpicture}
\end{center}
\caption{\label{figure superabundant curve} Example of a superabundant loop inside $\TT F_{0,l/2}$.}
\end{figure}

\subsection{Cutting procedure}
\label{section cutting procedure}

\subsubsection{Relation to tropical curve in $\RR^2$.} Each cylinder is obtained as a quotient of $\RR^2$ by a $\ZZ$-action. The action is generated by $\varphi_{0,\alpha}:(x,y)\mapsto(x+l,y+\alpha)$ in the case of $\TT F_{0,\alpha}$, and by $\varphi_\delta:(x,y)\mapsto(x+l,y+\delta x)$ in the case of $\TT F_\delta$. We just denote it by $\varphi$. The projection $\RR^2\to\TT F=\RR^2/\langle\varphi\rangle$ is a covering map. It is not possible to lift right away tropical curves inside $\TT F$ to tropical curves inside $\RR^2$. We explain how to do so by ``cutting" the curve inside $\TT F$.

\smallskip

Let $h:\Gamma\to\TT F$ be a parametrized tropical curve. Let $\Q\subset\Gamma$ be a set of points located on the edges such that $\Gamma-\Q$ is connected and without cycles. In particular, the image of $\pi_1(\Gamma-\Q)$ inside $\pi_1(\TT F)\simeq\ZZ$ is trivial. For each point $q\in\Q$, $(\Gamma-\Q)\cup\{q\}$ contains a unique loop. Let $\lambda_q\in\pi_1(\TT F)\simeq\ZZ$ be the homotopy class realized by the loop inside $\pi_1(\TT F)\simeq\ZZ$. Let $\Q'=\{q\in\Q:\lambda_q\neq 0\}$. The set $\Q'$ is called \textit{admissible}: it is a minimal set of points such that $\Gamma-\Q'$ is connected, and its image contractible inside $\TT F$.

\smallskip

Now, let $\Gamma'$ be the abstract tropical curve where the points from $\Q'$ have been removed and the edges containing a point from $\Q'$ have been replaced by a pair of unbounded ends. As the image of $\pi_1(\Gamma-\Q')$ is trivial, it is possible to lift $h|_{\Gamma-\Q'}$ into a map $\Gamma-\Q'\to\RR^2$. Extending to infinity the edges obtained by removing points from $\Q'$, we get a parametrized tropical curve $h':\Gamma'\to\RR^2$ of genus $g(\Gamma)-|\Q'|$. Moreover, from $\Q'$, each non-vertical unbounded end gets a marked point. Furthermore, unbounded ends are paired together according to the point $q\in\Q'$ they contain. Inside each pair $\{e,e'\}$ of ends having slopes $u_e,u_{e'}$, containing marked points $q_e,q_{e'}\in\RR^2$, small neighbourhoods of $q_e$ and $q_{e'}$ have the same image under $\RR^2\to\TT F$: if $q\in\Q'$ is their common image, up to relabeling one has $q_{e'}=\varphi^{\lambda_q}(q_e)$ and $u_{e'}=\pm(\mathrm{d}\varphi)^{\lambda_q}(u_e)$.

\smallskip

Conversely, take a parametrized tropical curve $h':\Gamma'\to\RR^2$ with this additional data and assumption: each non-vertical unbounded end contains a marked point, they are paired together, and have the same projection under $\RR^2\to\TT F$ near their marked point. Then we get a parametrized tropical curve $h:\Gamma\to\TT F$: it suffices to remove the non-vertical unbounded ends until their marked point, and project into $\TT F$. One can merge the marked points two by two, and the assumption ensures that the balancing condition is satisfied. The obtained tropical curve is of genus $g(\Gamma')$ plus the number of pairs.

\begin{rem}
In concrete terms, the cutting procedure just consists in cutting the curve so that a lift is possible, and then unfold the curve to the universal cover, extending the cut edges to unbounded ends.
\end{rem}

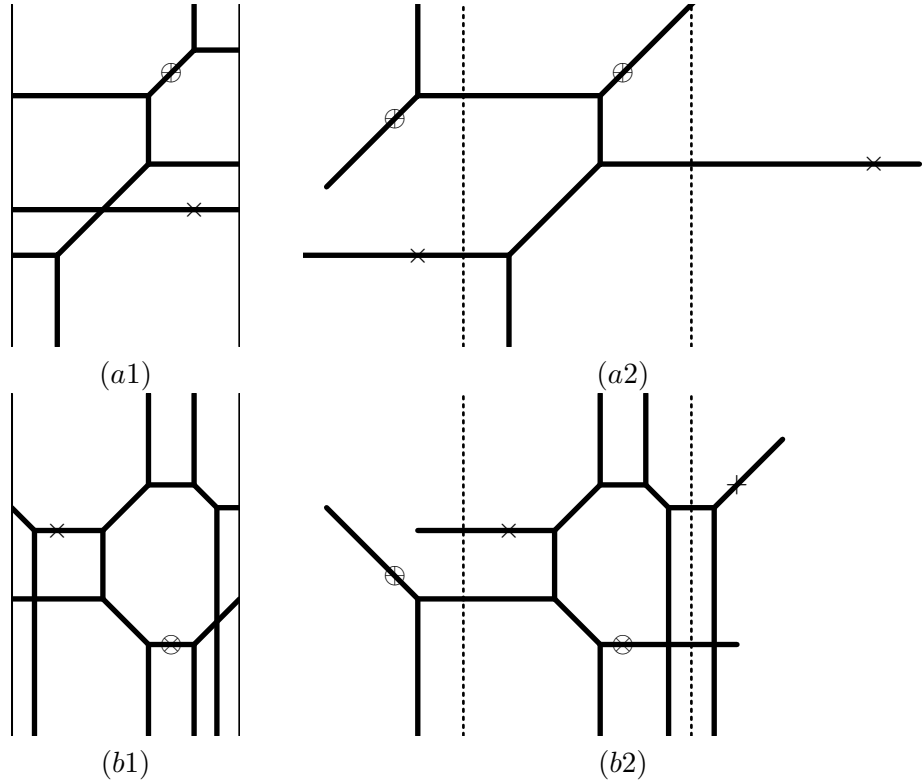
\begin{figure}[h]
\begin{center}
\begin{tabular}{ccc}
\begin{tikzpicture}[line cap=round,line join=round,>=triangle 45,x=0.3cm,y=0.3cm]
\clip(0,0) rectangle (10,15);
\draw [line width=1pt] (0,0)-- (0,15);
\draw [line width=1pt] (10,0)-- (10,15);

\draw [line width=2pt] (0,4)--++(2,0)--++(4,4)--++(4,0);
\draw [line width=2pt] (0,6)--++(10,0);
\draw [line width=2pt] (0,11)--++(6,0)--++(2,2)--++(2,0);
\draw [line width=2pt] (2,0)-- (2,4);
\draw [line width=2pt] (6,8)-- (6,11);
\draw [line width=2pt] (8,13)-- (8,15);

\draw (8,6) node {$\times$};
\draw (7,12) node {$\oplus$};

\begin{scriptsize}

\end{scriptsize}
\end{tikzpicture}
& &
\begin{tikzpicture}[line cap=round,line join=round,>=triangle 45,x=0.3cm,y=0.3cm]
\clip(-7,0) rectangle (21,15);
\draw [line width=1pt,dotted] (0,0)--++(0,15);
\draw [line width=1pt,dotted] (10,0)--++(0,15);

\draw [line width=2pt] (-11,4)--(2,4)--++(4,4)--++(14,0);
\draw [line width=2pt] (0,11)--++(6,0)--++(6,6);
\draw [line width=2pt] (0,11)--++(-2,0)--++(-4,-4);
\draw [line width=2pt] (2,0)-- (2,4);
\draw [line width=2pt] (6,8)-- (6,11);
\draw [line width=2pt] (-2,11)--++ (0,5);

\draw (18,8) node {$\times$};
\draw (-2,4) node {$\times$};
\draw (7,12) node {$\oplus$};
\draw (-3,10) node {$\oplus$};

\begin{scriptsize}

\end{scriptsize}
\end{tikzpicture} \\
$(a1)$ & & $(a2)$ \\
\begin{tikzpicture}[line cap=round,line join=round,>=triangle 45,x=0.3cm,y=0.3cm]
\clip(0,0) rectangle (10,15);
\draw [line width=1pt] (0,0)-- (0,15);
\draw [line width=1pt] (10,0)-- (10,15);

\draw [line width=2pt] (0,10)-- (1,9);
\draw [line width=2pt] (1,9)-- (4,9);
\draw [line width=2pt] (4,9)-- (6,11);
\draw [line width=2pt] (6,11)-- (8,11);
\draw [line width=2pt] (8,11)-- (9,10);
\draw [line width=2pt] (9,10)-- (10,10);

\draw [line width=2pt] (0,6)-- (4,6);

\draw [line width=2pt] (4,6)-- (6,4);
\draw [line width=2pt] (8,4)-- (10,6);
\draw [line width=2pt] (6,4)-- (8,4);

\draw [line width=2pt] (1,9)-- (1,0);
\draw [line width=2pt] (4,9)-- (4,6);
\draw [line width=2pt] (6,11)-- (6,15);
\draw [line width=2pt] (8,11)-- (8,15);
\draw [line width=2pt] (9,10)-- (9,0);

\draw [line width=2pt] (6,4)-- (6,0);
\draw [line width=2pt] (8,4)-- (8,0);

\draw (2,9) node {$\times$};
\draw (7,4) node {$\otimes$};

\begin{scriptsize}

\end{scriptsize}
\end{tikzpicture}
& &\begin{tikzpicture}[line cap=round,line join=round,>=triangle 45,x=0.3cm,y=0.3cm]
\clip(-7,0) rectangle (21,15);
\draw [line width=1pt,dotted] (0,0)-- (0,15);
\draw [line width=1pt,dotted] (10,0)-- (10,15);

\draw [line width=2pt] (10,10)-- (11,10);
\draw [line width=2pt] (-2,9)-- (4,9);
\draw [line width=2pt] (4,9)-- (6,11);
\draw [line width=2pt] (6,11)-- (8,11);
\draw [line width=2pt] (8,11)-- (9,10);
\draw [line width=2pt] (9,10)-- (10,10);
\draw [line width=2pt] (11,10)-- (14,13);

\draw [line width=2pt] (0,6)-- (4,6);

\draw [line width=2pt] (4,6)-- (6,4);
\draw [line width=2pt] (-2,6)-- (0,6);
\draw [line width=2pt] (6,4)-- (12,4);
\draw [line width=2pt] (-6,10)-- (-2,6);

\draw [line width=2pt] (11,10)-- (11,0);
\draw [line width=2pt] (4,9)-- (4,6);
\draw [line width=2pt] (6,11)-- (6,15);
\draw [line width=2pt] (8,11)-- (8,15);
\draw [line width=2pt] (9,10)-- (9,0);

\draw [line width=2pt] (6,4)-- (6,0);
\draw [line width=2pt] (-2,6)-- (-2,0);

\draw (2,9) node {$\times$};
\draw (12,11) node {$+$};
\draw (7,4) node {$\otimes$};
\draw (-3,7) node {$\oplus$};

\begin{scriptsize}

\end{scriptsize}
\end{tikzpicture}\\
$(b1)$ & & $(b2)$ \\
\end{tabular}
\caption{\label{figure cutting procedure}Examples of application of the cutting process for a curve inside $\TT F_{0,\alpha}$ in $(a)$ and in $\TT F_1$ for $(b)$.}
\end{center}
\end{figure}

\begin{expl}
On Figure \ref{figure cutting procedure} are two examples of tropical curve inside respectively some $\TT F_{0,\alpha}$ and $\TT F_1$ with an admissible set of points, and the corresponding tropical curve inside $\RR^2$. On $(a)$, the map $\varphi$ is just a translation. The points $\oplus$ are identified by $\varphi$, and the points $\times$ are identified by $\varphi^2$. On example $(b)$, notice that the map $\varphi$ is now a shear transformation that changes non-vertical slopes.
\end{expl}

Finally, we notice that a deformation of $h:\Gamma\to\TT F$ with a choice of admissible set $\Q'$ leads to a deformation of the cut curve $h':\Gamma'\to\RR^2$ provided the points of $\Q'$ never collide with a vertex. Conversely, a deformation of $h':\Gamma'\to\RR^2$ with the marked points on non-vertical ends that continue to match the conditions induce a deformation of $h:\Gamma\to\TT F$ provided the marked points never collide with vertices.

\subsubsection{Menelaus relation.} We have the following result that is the tropical analogue of Proposition \ref{theorem complex menelaus}.

\begin{theo}\label{theorem menelaus tropical bundle}
Let $h:\Gamma\to\TT F$ be a parametrized tropical curve of bidegree $(d_1,d_2)$ inside the total space of a line bundle $\L_D$ for some divisor $D$. Let $x_i\in E$ be the projection of the unbounded ends of $\Gamma$, with weight $w_i$, which is chosen with a sign such that it is positive or negative according to whether it is oriented up or down. Then the divisor $\sum w_i x_i$ is equivalent to $d_1 D$.
\end{theo}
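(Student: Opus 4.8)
The plan is to reduce the statement to the complex Theorem \ref{theorem complex menelaus}, or rather to mimic its proof tropically, by identifying the divisor $\sum w_i x_i$ with the tropical divisor $D(s)$ of a meromorphic section of $\L_D$ obtained from the curve $h$. First I would set up the cutting procedure: cut $\TT F_\delta$ along the fiber $\{0\}\times\RR = \{l\}\times\RR$ to get a genuine parametrized tropical curve $\widetilde h : \widetilde\Gamma \to \RR^2$, whose unbounded vertical ends are exactly the ends of $h$ together with the new ends created by cutting edges that crossed the boundary. The points $x_i \in E$ are the images in $E = \RR/l\ZZ$ of the horizontal coordinates of the vertical ends of $h$, and $w_i$ is the (signed) weight. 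The key observation is that, by the balancing condition applied componentwise, the total horizontal displacement forces $\sum_i w_i$-type relations; more precisely I want to track the ``height function'' along the curve.

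The main step is the following. Orient things so that one can integrate the vertical coordinate. Consider the tropical curve as (a union of) graphs of piecewise-affine functions over the base direction $E$; away from the vertical ends, project $h$ to $E$. Because $h$ has bidegree $(d_1,d_2)$, the projection $\Gamma \to E$ is, generically over $E$, a $d_1$-fold cover (the $d_1$ counts intersection with a fiber). Summing the $y$-coordinates of the $d_1$ sheets over a point of $E$ defines a piecewise-affine function $\psi : E \to \RR$ — this is exactly the section-theoretic construction: $\psi$ plays the role of ``$\sum$ of the $d_1$ values of the multisection'', i.e. the $y$-component of $d_1 s$ for $s$ a meromorphic section. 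The points where $\psi$ is non-affine are precisely (projections of) the vertical ends and the boundary crossings, and the defect $\val_{x}(\psi)$ there equals the signed sum of weights $w_i$ of the vertical ends over $x$, \emph{plus} a correction coming from the boundary crossings. By the gluing/cutting description (Remark after the construction of $\TT F_\delta$, and the Lemma computing the number of up-ends), an edge of slope $(u,v)$ crossing the boundary contributes a slope jump of $\delta u$, and summing over all boundary crossings the total contribution is $\delta d_1$, realized at the single point $0 \in E$ supporting $D = (0,\delta)$. Hence $\div \psi = \sum_i w_i x_i - \delta d_1 \cdot 0 = \sum_i w_i x_i - d_1 D$ as Weil divisors on $E$. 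Since $\div\psi$ is principal (it is the divisor of the meromorphic function $\psi$ on $E$), the proposition gives $\sum_i w_i x_i \equiv d_1 D$.

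The cleanest way to organize the correction term is probably to argue as in the complex proof: instead of a single section, take two meromorphic sections $s, s'$ of $\L_D$ and the pencil of ``tropical linear combinations'' $\min(s + t, s')$ for $t \in \RR$; the divisor of the corresponding $d_1$-multisection (obtained by superposing $h$ with $d_1$ copies of the pencil member and taking the tropical stable intersection with $E_0 \cup E_\infty$, or by a direct graph computation) is independent of $t$ by a tropical Bézout / conservation-of-number argument, and at $t \to \pm\infty$ it degenerates to $D(h) $ on one side and $d_1 D(s')$ on the other, both read off combinatorially. The main obstacle I anticipate is precisely the bookkeeping of the boundary-crossing correction: one must be careful that the integer-affine (non-product) structure of $\TT F_\delta$ means the naive ``height function'' $\psi$ is only well-defined on $E \setminus \{0\}$ and acquires a genuine jump of $\delta d_1$ at $0$, and checking that this jump is exactly $\delta d_1$ (and not, say, $\delta \sum u_i$ with $\sum u_i \neq d_1$) requires invoking that the total right-going horizontal degree is $d_1$ — which is the content of the bidegree definition and the Lemma preceding the theorem. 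Once that identification is pinned down, the rest is the (now routine) appeal to the principal-divisor characterization on $E$.
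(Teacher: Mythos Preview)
Your approach is valid but genuinely different from the paper's. The paper does not build the trace function~$\psi$ at all: it simply cuts $\TT F_\delta$ along the fiber to obtain a tropical curve $\widetilde h:\widetilde\Gamma\to\RR^2$ and then invokes the planar tropical Menelaus theorem (sum of moments of all unbounded ends is zero, where the moment of an end of slope $(u,v)$ is $\det((u,v),-)$ evaluated at a point of that end). The vertical ends contribute $\sum w_i x_i$; each boundary crossing at height $y_j$ with slope $(u_j,v_j)$ produces two new ends whose moments are $u_jy_j-v_jl$ and $-u_jy_j$ (respectively $-u_jy_j-u_j\alpha$ in the $\delta=0$ case), and these cancel modulo~$l$, giving the result in two lines.

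Your construction of $\psi$ is essentially a repackaging of the same cancellation --- the slope of $\psi$ \emph{is} the running sum of moments --- but it buys you an explicit principal-divisor witness for the equivalence rather than a black-box citation, at the cost of more bookkeeping. One point to tighten: the projection $\Gamma\to E$ is a $d_1$-fold cover only with multiplicity, so ``summing the $y$-coordinates of the $d_1$ sheets'' must mean $\psi(x)=\sum_e |u_e|\,y_e(x)$ over non-vertical edges $e$ above $x$; without the $|u_e|$ weight the boundary jump comes out as $\delta\sum u_j^2$ rather than $\delta\sum|u_j|=\delta d_1$. Once that is fixed, your computation of $\operatorname{div}\psi$ goes through (up to a global sign depending on the up/down convention for the $w_i$). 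The pencil argument you sketch in the last paragraph is unnecessary in the tropical setting and would be harder to make precise than either direct route.
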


\begin{proof}
Assume $\delta\neq 0$ and that $D=(0,\delta)\in E\times\ZZ$, so that the description of $\TT F$ is given in the preceding section. Apply the cutting procedure to get a tropical curve $\widetilde{h}:\widetilde{\Gamma}\to\RR^2$. According to the tropical Menelaus theorem \cite{mikhalkin2017quantum}, we know that the sum of the moments of the coordinates of the unbounded ends is $0$, where the moment of an unbounded end of slope $(u,v)$ is the evaluation of $\det((u,v),-)$ at any of its points.

Let $x_i$ be the abscissa of the unbounded ends of $\Gamma$, with $w_i$ their respective weight, which is positive or negative according to whether the end points up or down. Let $(l,y_j)$ be the intersection points between $\Gamma$ and the component of the boundary of the strip which is on the right, and $(u_j,v_j)$ the slope of $h$ at the point. The slope at the corresponding point $(0,y_j)$ on the left is $(-u_j,\delta u_j-v_j)$. The moment of $\widetilde{\Gamma}$ at these points are respectively $u_jy_j-v_jl$ and $-u_jy_j$. First, the balancing condition guarantees that $\sum w_i=\delta d_1$, which is the degree of $d_1 D$. By tropical Menelaus theorem in the plane, we get that
$$\sum w_i x_i=-\sum_j (u_jy_j-v_jl-u_jy_j)\equiv 0\mod l.$$
If $\delta=0$, we are inside $\TT F_{0,\alpha}$ and the points which are identified are $(l,y_j)$ and $(0,y_j+\alpha)$. Thus, the moments become respectively $u_jy_j-v_jl$ and $-u_jy_j-u_j\alpha$. We get
$$\sum w_i x_i\equiv \alpha\sum u_j\mod l,$$
which yields the result since $\sum u_j=d_1$. For other line bundles $\L_D$ and other divisors $D$, the result follows by translation.
\end{proof}

\begin{rem}
Similarly to the tropical Menelaus in the plane, Theorem \ref{theorem menelaus tropical bundle} tells us that there is a relation between the position of the unbounded ends of a curve, and they cannot be chosen freely. The equality is only mod $l$ because the coordinates are numbers in $\RR/l\ZZ$.
\end{rem}

\section{Enumerative problems and tropical invariants}

\subsection{Enumerative problems}

Let $\M_{g,n}(\TT F,(d_1,d_2),\mu_0,\mu_\infty)$ be the moduli space of irreducible parametrized tropical curves of genus $g$, bidegree $(d_1,d_2)$, tangency profile $(\mu_0,\mu_\infty)$, with $n$ marked points. A curve in $\M_{g,n}(\TT F,(d_1,d_2),\mu_0,\mu_\infty)$ has exactly $|\mu_0|+|\mu_\infty|$ ends. Assuming the curve is trivalent, it has exactly $3g-3+|\mu_0|+|\mu_\infty|+n$ bounded edges. The $g$ cycles of the graph give $2g$ conditions on the lengths of the bounded edges. Adding the translation, we get an expected dimension of
$$\dim\M_{g,n}(\TT F,(d_1,d_2),\mu_0,\mu_\infty)= |\mu_0|+|\mu_\infty|+g-1+n.$$

It is possible that some combinatorial types vary in a space whose dimension is strictly bigger than the expected dimension. This happens when the cycles do not impose independent conditions on the edge lengths. Such combinatorial types are called \textit{superabundant}. However, as in the planar case handled in \cite{mikhalkin2005enumerative}, it can be shown that they do not contribute any solution to the enumerative problems, and one can thus restrict to the space of \textit{simple tropical curves} defined below, which vary in a space of the expected dimension. In our problem, the only superabundant curves that may appear are superabundant loops from example \ref{example superabundant curves}. Those are handled using Proposition \ref{proposition superabundant loops}.

\begin{defi}
We say that the combinatorial type of a parametrized tropical curve $h:\Gamma\to\TT F$ is \textit{simple} if $\Gamma$ is trivalent and $h$ is an immersion.
\end{defi}

The fact that $h$ is an immersion means that the parametrized curve $h:\Gamma\to\TT F$ cannot have any edge mapped to a point, nor a a flat vertex, \textit{i.e.} a vertex whose adjacent edges are all mapped to a line.

\begin{prop}\label{proposition superabundant loops}
Let $\TT F$ be a cylinder that contains a tropical curve $h:\Gamma\to\TT F$ that has no unbounded end and for which $h$ is an immersion. Then, $\TT F$ is some $\TT F_{0,\alpha}$ for $\alpha\in\QQ l$, and $\Gamma$ is an elliptic curve mapped to $\TT F_{0,\alpha}$ having constant slope proportional to to $(l,-\alpha)$.
\end{prop}

\begin{proof}
Assume $h:\Gamma\to\TT F$ of bidegree $(d_1,d_2)\neq (0,0)$ has no unbounded end. We thus have $d_2=0$ since it has no top ends, and $\delta=0$ since the the curve has $\delta d_1$ lower ends and $d_1\neq 0$. Thus, $\TT F$ is some $\TT F_{0,\alpha}$.

\smallskip

Then, apply the cutting procedure for some admissible set $\Q'$ to get a curve inside $\RR^2$. The pairs of ends have slopes $\pm v_e$ for some $v_e\in\ZZ_{>0}\times\ZZ$, and the marked points on it differ by some integer multiple of $(l,-\alpha)$. For each pair of ends, let $(x_e,y_e)$ be the marked point from $\Q'$ on the end directed by $-v_e$, and $(x_e+k_el,y_e-k_e\alpha)$ the point on the end directed by $v_e$, for some $k_e\in\ZZ$. Menelaus relation for $\Gamma'$ ensures that
$$\sum_e \det(-v_e,(x_e,y_e))+\det(v_e,(x_e+k_el,y_e+k_e\alpha))=0.$$
Thus, $\det\left(\sum_e k_e v_e,(l,-\alpha)\right)=0$. As $\sum_e k_e v_e\in\ZZ_{>0}\times\ZZ$, we have that $(l,-\alpha)$ has rational slope and $\alpha\in\QQ l$. In particular, $\TT F_{0,\alpha}$ contains tropical elliptic curves going once around the cylinder direction: the superabundant loops of slope $(l,-\alpha)$ as in example \ref{example superabundant curves}. These are parametrized by the coordinate of their intersection point with a fixed fiber.

\smallskip

To conclude, assume $\Gamma$ has a at least one vertex. Consider a superabundant loop $\Upsilon$ of slope $(l,-\alpha)$ of highest coordinate that also passes through a vertex $V$ of $\Gamma$. It is possible since there are only a finite number of vertices in $\Gamma$. Not all the adjacent edges of $\Gamma$ to $V$ can be contained in the same line since $h$ is an immersion. But if some edge does not have slope $(l,-\alpha)$, by balancing condition, there is at least one edge that goes above $\Upsilon$, and as $\Gamma$ has no unbounded ends, it has to meet another vertex $W$. A superabundant loop passing through $W$ would be higher than $\Upsilon$, contradicting the assumption on the choice of $\Upsilon$. Thus, all the edges of $\Gamma$ are contained in $\Upsilon$. As $h$ is an immersion, $\Gamma$ has no vertices. It is thus of genus $1$ and is a superabundant loop.
\end{proof}

\begin{rem}
Assuming that $\alpha\notin\QQ l$ ensures that there are no superabundant loops. If $(d_1,d_2)$ is fixed, it is only sufficient to assume that $\alpha\notin \frac{1}{d_1!}\ZZ l$ to ensure that curves of bidegree $(d_1,d_2)$ cannot have any irreducible component being a superabundant loop.
\end{rem}

We have the following evaluation map:
$$\begin{array}{rccl}
\mathrm{ev} & : \M_{g,n}(\TT F,(d_1,d_2),\mu_0,\mu_\infty) & \longrightarrow & \TT F^n \\
 & (\Gamma,h,x_1,\dots,x_n) & \longmapsto & (h(x_1),\dots,h(x_n)) \\
\end{array},$$
that sends a parametrized curve to the position of its marked points. It is also possible to evaluate the position of some unbounded ends, and the evaluation map then takes values inside $\TT F^n\times\TT E^m$, where $m$ is the number of evaluated ends.

\medskip

The following proposition computes the dimension of the moduli space of curves, throwing away the case where superabundant loops occur.

\begin{prop}\label{proposition dimension count}
Assume $\TT F$ is some $\TT F_\delta$ or some $\TT F_{0,\alpha}$ for $\alpha\notin\QQ l$. The dimension of the subspace of $\M_{g,n}(\TT F,(d_1,d_2),\mu_0,\mu_\infty)$ parametrizing the curves having a simple combinatorial type is  $|\mu_0|+|\mu_\infty|+g-1$. For combinatorial types where $\Gamma$ is not trivalent anymore but $h$ is still an immersion, the dimension is strictly less.
\end{prop}

\begin{proof}
Assume $h:\Gamma\to\TT F$ is simple. Using the cutting procedure with an admissible set $\Q'$, we get a curve $h':\Gamma'\to\RR^2$ with a pairing between the non-vertical ends. The deformations of $h:\Gamma\to\TT F$ correspond to deformations of $h':\Gamma'\to\RR^2$ that keep satisfying the \textit{gluing condition} from the cutting procedure: paired ends coincide under the projection $\RR^2\to\TT F$. Recall that for an end with outing slope $u_e\in\ZZ^2$, its \textit{moment} $\mu_e$ is $\det(u_e,P)$, where $P$ is any point on $e$, \textit{e.g.} $q_e$ from the cutting process. The gluing condition amounts to find deformations of $h:\Gamma'\to\RR^2$ such that for each pair $\{e,e'\}$ the quantity $\mu_e+\mu_{e'}$ remains constant.

\smallskip

The moments satisfy the Menelaus relation: $\sum_e\mu_e=0$. It is in fact the only relation between them: if we had another relation, we could use both relations to get a relation not involving all the moments. Then, let $e_0$ be one of the end not involved in the relation. For each other end $e\neq e_0$, as $\Gamma'$ is trivalent, it is possible to deform a path from $e_0$ to $e$. This changes the value of $\mu_e$ and $\mu_{e_0}$ but not for any other end, preventing a relation not involving all the ends to exist.

\smallskip

To conclude, according to \cite{mikhalkin2005enumerative}, $h':\Gamma'\to\RR^2$ varies in a space of dimension $|\mu_0|+|\mu_\infty|+2|\Q'|+g(\Gamma')-1$. Thanks to Proposition \ref{proposition superabundant loops}, there is at least one vertical unbounded end. The gluing condition thus impose $|\Q'|$ independent conditions. As $g(\Gamma)=g(\Gamma')+|\Q'|$, we get the result for simple combinatorial types.

\medskip

We now assume that $h$ is still an immersion but $\Gamma$ is not trivalent anymore. Furthermore, we can assume that no vertex can be split into two vertices adding an edge of slope $0$: if it were the case, we could split the vertex $V$ into $V_1$ and $V_2$ adding an artificial bounded edge of slope $0$ and then delete it. The vertices $V_1$ and $V_2$ are still vertices of the new curve after the splitting, or might become marked points, if they were not at least trivalent. We would get into one of the following situations:
\begin{itemize}[label=-]
\item If the edge was disconnecting, we now have two simple tropical curves $h_j:\Gamma_j\to\TT F$ and genus $g_j$ of smaller degree with tangency profile $(\mu_0^j,\mu_\infty^j)$, for $j=1,2$. By induction on the degree, we can assume they vary in a space of respective dimension at most $|\mu_0^j|+|\mu_\infty^j|+g_j-1$. Deformations of $\Gamma$ are obtained by taking deformations of $\Gamma_1$ and $\Gamma_2$ mapping $V_1$ and $V_2$ to the same image. Thus, as $g=g_1+g_2$, $\Gamma$ varies in a space of dimension at most
$$|\mu_0|+|\mu_\infty|+g-2<|\mu_0|+|\mu_\infty|+g-1.$$
\item If the edge is not disconnecting, we get a new parametrization of the image of $\Gamma$ by a simple tropical curve $\widetilde{h}:\widetilde{\Gamma}\to\TT F$ which is of genus $g-1$. Deformations of $\Gamma$ are the deformations of $\widetilde{\Gamma}$ that map $V_1$ and $V_2$ to the same image. By induction, we can still assume that it varies in a space of dimension at most
$$|\mu_0|+|\mu_\infty|+(g-1)-1<|\mu_0|+|\mu_\infty|+g-2.$$
\end{itemize}

Therefore, we can assume no vertex can be split. Then, it is possible to find a one-parameter family deforming $\Gamma$ into a nearby simple tropical curve: for any non-trivalent vertex, choose a loop or a string between two unbounded ends and deform it, and continue for each remaining non-trivalent vertex. Thanks to the assumption on vertices, no cycle or string can arrive at a vertex and leave with the same slope, so each deformation deletes at least one non-trivalent vertex. If the dimension of the deformation space of $h:\Gamma\to\TT F$ was at least $|\mu_0|+|\mu_\infty|+g-1$, this nearby simple tropical curve would vary in a space of dimension at least one more (using the deformation provided by the one parameter family), which contradicts the dimension count for simple tropical curves.
\end{proof}

\begin{rem}
The proof of the statement concerning non-trivalent tropical curves can be seen as follows: the moduli space $\M_{g,n}(\TT F,(d_1,d_2),\mu_0,\mu_\infty)$ is a polyhedral complex, and cells corresponding to non-trivalent curves are faces of cells corresponding to simple combinatorial types, thus of strictly smaller dimension.
\end{rem}

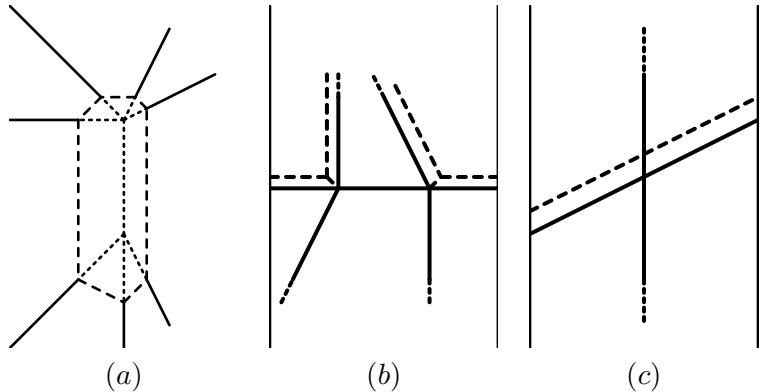
\begin{figure}
\begin{center}
\begin{tabular}{ccc}
\begin{tikzpicture}[line cap=round,line join=round,>=triangle 45,x=0.3cm,y=0.3cm]
\clip(0,0) rectangle (10,15);

\draw [line width=1pt,dotted] (5,10)-- (5,5);

\draw [line width=1pt,dotted] (5,10)-- (3,10);
\draw [line width=1pt,dotted] (5,10)-- (4,11);
\draw [line width=1pt,dotted] (5,10)-- (5.5,11);
\draw [line width=1pt,dotted] (5,10)-- (6,10.5);

\draw [line width=1pt,dotted] (5,5)-- (3,3);
\draw [line width=1pt,dotted] (5,5)-- (5,2);
\draw [line width=1pt,dotted] (5,5)-- (6,3);

\draw [line width=1pt] (3,10)-- (0,10);
\draw [line width=1pt] (4,11)-- (0,15);
\draw [line width=1pt] (5.5,11)-- (7,14);
\draw [line width=1pt] (6,10.5)-- (9,12);

\draw [line width=1pt] (3,3)-- (0,0);
\draw [line width=1pt] (5,2)-- (5,0);
\draw [line width=1pt] (6,3)-- (7,1);

\draw [line width=1pt, dashed] (3,10)-- (3,3);
\draw [line width=1pt, dashed] (3,3)-- (5,2);
\draw [line width=1pt, dashed] (5,2)-- (6,3);
\draw [line width=1pt, dashed] (6,3)-- (6,10.5);
\draw [line width=1pt, dashed] (6,10.5)-- (5.5,11);
\draw [line width=1pt, dashed] (5.5,11)-- (4,11);
\draw [line width=1pt, dashed] (4,11)-- (3,10);
v

\begin{scriptsize}

\end{scriptsize}
\end{tikzpicture}
&
\begin{tikzpicture}[line cap=round,line join=round,>=triangle 45,x=0.3cm,y=0.3cm]
\clip(0,0) rectangle (10,15);
\draw [line width=1pt] (0,0)-- (0,15);
\draw [line width=1pt] (10,0)-- (10,15);

\draw [line width=1.5pt] (0,7)-- (10,7);
\draw [line width=1.5pt] (3,7)-- (3,11);
\draw [line width=1.5pt,dotted] (3,12)-- (3,11);
\draw [line width=1.5pt] (3,7)-- (1,3);
\draw [line width=1.5pt,dotted] (0.5,2)-- (1,3);
\draw [line width=1.5pt] (7,7)-- (5,11);
\draw [line width=1.5pt,dotted] (5,11)-- (4.5,12);
\draw [line width=1.5pt] (7,7)-- (7,3);
\draw [line width=1.5pt,dotted] (7,3)-- (7,2);

\draw [line width=1.5pt,dashed] (0,7.5)-- (2.5,7.5);
\draw [line width=1.5pt,dashed] (2.5,7.5)-- (2.5,12);
\draw [line width=1.5pt,dashed] (2.5,7.5)-- (3,7);
\draw [line width=1.5pt,dashed] (7,7)-- (7.5,7.5);
\draw [line width=1.5pt,dashed] (10,7.5)-- (7.5,7.5);
\draw [line width=1.5pt,dashed] (5.5,11.5)-- (7.5,7.5);

\begin{scriptsize}

\end{scriptsize}
\end{tikzpicture}
&
\begin{tikzpicture}[line cap=round,line join=round,>=triangle 45,x=0.3cm,y=0.3cm]
\clip(0,0) rectangle (10,15);
\draw [line width=1pt] (0,0)-- (0,15);
\draw [line width=1pt] (10,0)-- (10,15);

\draw [line width=1.5pt] (5,3)-- (5,12);
\draw [line width=1.5pt,dotted] (5,3)-- (5,1);
\draw [line width=1.5pt,dotted] (5,14)-- (5,12);
\draw [line width=1.5pt] (0,5)-- (10,10);
\draw [line width=1.5pt, dashed] (0,6)-- (10,11);

\begin{scriptsize}

\end{scriptsize}
\end{tikzpicture}\\
$(a)$ & $(b)$ & $(c)$ \\
\end{tabular}
\caption{\label{figure opening cycle} On $(a)$, the deformation of a flat contractible cycle into a an open cycle. On $(b)$, the deformation of a flat non-contractible cycle containing at least to vertices. On $(c)$, the deformation of a loop.}
\end{center}
\end{figure}

\begin{rem}\label{remark superabundancy degree zero}
Using Proposition \ref{proposition superabundant loops}, the restriction to $\TT F$ being $\TT F_\delta$ or $\TT F_{0,\alpha}$ for $\alpha\notin\QQ l$ prevents the appearance of superabundant loops. The latter are genus $1$ curve without unbounded ends that yet vary in a $1$-dimensional space: they are superabundant. The proof and statement of Proposition \ref{proposition dimension count} can be adapted to work in the setting of $\TT F_{0,\alpha}$ when $\alpha\in\QQ l$. The dimension count for simple combinatorial types is still valid, except when there are no ends: Proposition \ref{proposition superabundant loops} then ensures that the curve is a superabundant loop that varies in a $1$-dimensional space.

\smallskip

Now, the only difference is that when $\Gamma$ is not trivalent anymore, we need to take the following case into account: when splitting the vertices of $\Gamma$ that can be split in a way that disconnects $\Gamma$ into $\Gamma_1$ and $\Gamma_2$, one of the component might have no ends, and thus become a superabundant loop according to \ref{proposition superabundant loops}. Assume $\Gamma_2$ is the superabundant loop. Then the dimension of the space in which $\Gamma$ varies is equal to the dimension in which $\Gamma_1$ varies if $V_1$ is a vertex of $\Gamma_1$, or plus one if $V_1$ is not a vertex of $\Gamma_1$ (meaning $V$ was a quadrivalent vertex). Thus, the combinatorial types that vary in a space of dimension $|\mu_0|+|\mu_\infty|+g-1$ are the simple ones, that may additionally have superabundant loops attached to edges.
\end{rem}

Unless stated otherwise, we consider $\TT F$ as in the assumption of Proposition \ref{proposition dimension count} so that there are no superabundant loops. We then consider the following enumerative problems.

\begin{prob}
How many parametrized tropical curves of bidegree $(d_1,d_2)$ and genus $g$ pass through $\delta d_1+2d_2+g-1$ points in generic position ?
\end{prob}

Let $\|\mu_0\|+\|\nu_0\|=\delta d_1+d_2$ and $\|\mu_\infty\|+\|\nu_\infty\| =d_2$ be partitions.

\begin{prob}
How many parametrized tropical curves of bidegree $(d_1,d_2)$, ramification profile $(\mu_0+\nu_0,\mu_\infty+\nu_\infty)$ and genus $g$ have $\mu_{0i}$ unbounded ends of slope $(0,-i)$ and $\mu_{\infty i}$ unbounded ends of slope $(0,i)$ which are fixed, and pass through an additional set of $|\nu_0|+|\nu_\infty|+g-1$ points in generic position ?
\end{prob}

Both enumerative problem amount to find the preimages of a generic point configuration by the evaluation map.

\subsection{Correspondence theorem}

Unless stated otherwise, we consider $\TT F$ as in the assumption of \ref{proposition dimension count} so that there are no superabundant loops. First, we prove that the solutions to the preceding enumerative problems need to be simple tropical curves.

\begin{lem}
If the point configuration $\P$ is generic, the tropical curves passing through $\P$ are simple and their number is finite.
\end{lem}

\begin{proof}
We consider the moduli space $\M_{g,n}(\TT F,(d_1,d_2))$, with $n=\delta d_1+2d_2+g-1$ marked points. The enumerative problem consists in finding the antecedent of a generic $\P\in\TT F^n$ by the evaluation map. The moduli space is the union of a finite number of polyedra corresponding to the possible combinatorial types. We want to prove that only the linear maps associated to simple combinatorial types are surjective.

If a combinatorial type does not correspond to an immersion, then, as in the planar case, it is possible to reparametrize the image by an immersion which is of strictly smaller genus or fewer ends. This is done by gluing the parallel adjacent edges, and deleting the contracted edges. Thus, we restrict to the immersed case.

If the combinatorial type corresponds to an immersion, we have seen that the dimension is as expected for a trivalent curve, and strictly less for a non-trivalent one, unless we are in presence of superabundant loops, but the assumption ensures that there are none. In particular, for the immersions of smaller genus or with fewer ends obtained from non-immersed curves, the dimension is also strictly less. Then the evaluation map cannot be surjective for these combinatorial types. The result follows.
\end{proof}

We now count the solutions of the preceding enumerative problems with suitable multiplicities. For a simple  tropical curve $h:\Gamma\to\TT F$, the multiplicity of a vertex $V$ is $|\det(a_V,b_V)|$, where $a_V$ and $b_V$ are the slopes of two out of the three edges adjacent to $V$. The determinant can be taken in any affine chart of the manifold, since its value does not depend on this choice.

\begin{defi}
The multiplicity of a connected simple tropical curve $h:\Gamma\to\TT F$ is
$$m^\CC_\Gamma=\prod_V m_V,$$
where the product runs over the trivalent vertices of $\Gamma$.
\end{defi}

\begin{rem}
In the presence of superabundant loops, \textit{i.e.} in $\TT F_{0,\alpha}$ for $\alpha\in\QQ l$, it is possible to define a multiplicity to a superabundant loop attached to a curve so that the count matches the count when $\alpha\notin\QQ l$. It shall be obtained as follows: slightly deforming $\alpha$ deletes the superabundant loops and it becomes an edge that goes around the cylinder direction several times. The multiplicity with which it needs to be counted encompasses the various possibilities, as shown on Figure \ref{figure deforming superabundant loop}.
\end{rem}

Let $\P\subset\TT F$ be a generic configuration of $\delta d_1+2d_2+g-1$ points inside $\TT F$. Let
$$N^{\delta,\mathrm{trop}}_{g,(d_1,d_2)}(\P)=\sum_{h(\Gamma)\supset\P} m_\Gamma^\CC,$$
where the sum runs over the irreducible parametrized tropical curves $h:\Gamma\to\TT F$ of bidegree $(d_1,d_2)$ and genus $g$ that pass through $\P$. The corresponding sum over the curves including the reducible ones is denoted by $N^{\delta,\mathrm{trop},\bullet}_{g,(d_1,d_2)}(\P)$.

For the second enumerative problem, fix partitions $\mu_0$, $\mu_\infty$, $\nu_0$ and $\nu_\infty$ such that $\|\mu_0\|+\|\nu_0\|=\delta d_1+d_2$ and $\|\mu_\infty\|+\|\nu_\infty\|=d_2$. Let $\P$ be this time a configuration of $|\nu_0|+|\nu_\infty|+g-1$ points, for each $i$, $|\mu_{0i}|$ points on $E_0$ and $|\mu_{\infty i}|$ points on $E_\infty$. Let
$$N^{\delta,\mathrm{trop}}_{g,(d_1,d_2)}(\mu_0,\mu_\infty,\nu_0,\nu_\infty)(\P)=\frac{1}{I^{\mu_0}I^{\mu_\infty}}\sum_{h(\Gamma)\supset\P} m_\Gamma^\CC,$$
where the sum runs over the irreducible parametrized tropical curves $h:\Gamma\to\TT F$ satisfying the following:
\begin{itemize}[label=-]
\item They are of bidegree $(d_1,d_2)$, genus $g$ and tangency profile $(\mu_0+\nu_0,\mu_\infty+\nu_\infty)$.
\item They pass through $\P$.
\item For any $i$, the curve pass through each of the $\mu_{0i}$ (resp. $\mu_{\infty i}$) marked points on $E_0$ (resp. $E_\infty$) with an end of weight $i$.
\end{itemize}
The count including reducible curves is denoted with a $\bullet$.

\begin{rem}
The counts a priori depend on the choice of the cylinder $\TT F$. In fact they only depend on the cylinder through the degree $\delta$, and that is why only the latter is emphasized in the notation.
\end{rem}

We have the following correspondence theorem that relates the enumerative problems over $\CC F$, and $\TT F$.

\begin{theo}\label{theorem correspondence}
\begin{itemize}[label=$\circ$]
\item The count $N^{\delta,\mathrm{trop}}_{g,(d_1,d_2)}(\P)$ does not depend on the choice of $\P$ as long as it is generic, and it is equal to $N^{\delta,\mathrm{cpx}}_{g,(d_1,d_2)}$ (resp. with $\bullet$).
\item The count $N^{\delta,\mathrm{trop}}_{g,(d_1,d_2)}(\mu_0,\mu_\infty,\nu_0,\nu_\infty)(\P)$ does not depend on the choice of $\P$ as long as it is generic, and it is equal to $N^{\delta,\mathrm{cpx}}_{g,(d_1,d_2)}(\mu_0,\mu_\infty,\nu_0,\nu_\infty)$ (resp. with $\bullet$).
\end{itemize}
\end{theo}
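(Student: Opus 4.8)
The plan is to follow the now-standard strategy for correspondence theorems of Mikhalkin--Nishinou--Siebert type, adapting it to the cylindrical ambient space $\TT F_\delta$ by means of the cutting procedure introduced in Section~\ref{section tropical line bundle elliptic curve}. First I would choose the point configuration $\P$ in a sufficiently stretched position, so that every tropical solution is simple (by the preceding Lemma) and, after applying the cutting procedure, becomes a genuine tropical curve $\widetilde h:\widetilde\Gamma\to\RR^2$ of genus $g-K$ with $|\mu_0|+|\mu_\infty|+2K$ unbounded ends, where $K$ is the number of intersection points with the cut. The gluing condition on the $2K$ new ends cuts out exactly the subspace of $\RR^2$-curves that descend to $\TT F_\delta$, and it imposes $K$ independent linear conditions; this is what matches the dimension count. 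The superabundant loops present when $\delta=0$ and $\alpha\in\QQ l$ are treated separately: one removes them, parametrizes them by a marked point as in Remark~\ref{remark superabundancy degree zero}, and the factor $\sigma_0(d)$ in their multiplicity is precisely the number of connected degree-$d$ covers of a circle by a circle, which is the classical count of such components.

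Next I would set up the two correspondences in parallel. For the tropical-to-complex direction, I invoke a realization theorem: given a simple tropical curve through $\P$ inside $\TT F_\delta$, I work over the field of Puiseux series, degenerate $\CC F_\delta$ to a union of toric pieces dictated by the combinatorial type (a one-parameter family whose central fiber is governed by the dual subdivision, glued along the fibers corresponding to the cut), and count the lifts. The count of complex lifts of a trivalent vertex of multiplicity $m_V$ is $m_V$, which reproduces $m^\CC_\Gamma=\prod_V m_V$; the invariance of $\det(a_V,b_V)$ under change of affine chart of $\TT F_\delta$, already noted in the excerpt, is what makes this well-defined on the cylinder. For the relative version, the ends hitting $E_0$ and $E_\infty$ at fixed points correspond to fixing tangency points on the two sections, and the normalization factor $\tfrac{1}{I^{\mu_0}I^{\mu_\infty}}$ accounts for the standard multiplicity $\prod i$ of an order-$i$ tangency that is not automatically recorded combinatorially; here the Menelaus-type constraint of Theorem~\ref{theorem complex menelaus} and its tropical counterpart Theorem~\ref{theorem menelaus tropical bundle} guarantee that the tropical and complex enumerative problems have the same number of free point constraints, so the dimensions agree. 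For the complex-to-tropical direction, I take the valuation (tropicalization) of a complex solution family: the image is a tropical curve through $\P$, and a dimension/genericity argument shows it is simple, so the two counts are inverse to each other up to the stated multiplicities.

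Finally, independence of $\P$ follows formally once the correspondence is established, since the complex invariant does not depend on the points; alternatively one checks it directly on the tropical side by a wall-crossing argument across the codimension-one strata of the configuration space (a four-valent vertex, or a point passing a vertex), exactly as in \cite{mikhalkin2005enumerative}, with the extra strata coming from an end crossing the cut handled by the gluing condition.

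I expect the main obstacle to be the realization (lifting) step in the cylindrical setting: unlike the toric case there is no single affine chart, the degeneration of $\CC F_\delta$ must be built so that the transition across the cut correctly reproduces the twist $(u,v)\mapsto(-u,\delta u-v)$ in the lattice structure, and one must verify that superabundant configurations genuinely contribute nothing (beyond the loops in the $\delta=0$, $\alpha\in\QQ l$ case) — this is the point where the non-product affine structure of $\TT F_\delta$ really enters, and where care is needed to ensure the log-smooth (or Puiseux-series) degeneration has the expected central fiber and that obstruction groups vanish.
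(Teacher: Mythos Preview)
Your overall strategy---build a toric degeneration and run the Nishinou--Siebert machine---is the same as the paper's, and you correctly identify the main obstacle as constructing the degeneration of $\CC F_\delta$ compatible with the twist $(u,v)\mapsto(-u,\delta u-v)$. However, what you leave open is exactly what the paper's proof supplies, and your route via the cutting procedure is not how the paper gets there.

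The paper does \emph{not} cut to $\RR^2$ and then try to impose the gluing condition on the complex side. Instead, it builds the degeneration of $\CC F_\delta$ globally, as follows: from the tropical solutions one obtains a polyhedral subdivision of the strip, which extends \emph{periodically} to a subdivision $\Xi_1$ of all of $\RR^2$ under the $\ZZ$-action generated by $(x,y)\mapsto(x+l,y+\delta x)$; the cone over this gives an (infinite) fan $\Xi$ in $\RR^3$ defining an ``almost toric'' threefold $\T'$. The $\ZZ$-action lifts to the monomial map $(t,z,w)\mapsto(t,at^l z,z^\delta w)$ on $(\CC^*)^3$, and the quotient $\T'/\ZZ$ is a one-parameter family $\X_t\to\E_t$ whose general fiber is a degree-$\delta$ line bundle over $\E_t$ and whose central fiber is a union of honest toric surfaces glued along their toric divisors. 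Once you have this, the Nishinou--Siebert argument (log structures, prelog curves, upgrading to log curves) runs verbatim, giving the multiplicity $\prod_V m_V$ as you say. The point is that the twist is absorbed into the $\ZZ$-action before one ever passes to toric pieces, so there is no ``gluing condition'' to enforce afterwards.

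Two smaller remarks. First, choosing $\P$ stretched is irrelevant here; genericity suffices for the correspondence theorem, and stretching only enters later for the floor-diagram algorithm. Second, your treatment of the superabundant loops is morally right, but the paper computes $\sigma_0(d)$ on the complex side (degree-$nd$ covers of $\E$ lifting to $\CC F_{0,\alpha}\setminus(E_0\cup E_\infty)$, which forces the divisor to be a multiple of $n$), not just as covers of circles by circles.
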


\begin{proof}
For the curves which are not superabundant, the proof of this correspondence theorem follows exactly the same steps as other versions of the correspondence theorem that take place inside toric surfaces. This follows from the fact that it is possible to degenerate the surfaces $\CC F$ into a union of toric surfaces glued over their toric divisors, and endow it with a smooth log structure. We explain the main steps and refer the reader to \cite{nishinou2006toric} for more details. 

\medskip

The union of the images of all tropical curves passing through $\P$ induces a polyedral subdivision of the strip $[0;l]\times\RR$. Up to adding fibers to this union, we can assume that for each vertical edge in the subdivision, the whole vertical line containing it is in the subdivision. Using the $\ZZ$ action generated by the map $(x,y)\mapsto(x+l,y+\delta x)$, it extends periodically to a polyhedral subdivision $\Xi_1$ of $\RR^2$. If $\delta=0$, the action is rather generated by $(x,y)\mapsto(x+l,y-\alpha)$. For any value of $(d_1,d_2)$, we can assume that $\alpha,l\in\QQ$ but such that there are no superabundant loop of bidegree $(k,0)$ for any $k\leqslant d_1$. By periodically extending the subdivision of $[0;l]$, we also get a subdivision of $\RR$. We consider the cones over both subdivisions:
\begin{itemize}[label=$\circ$]
\item The cone over the subdivision in $\RR$ is a fan in $\RR^2$ whose rays are generated by $(1,p_i)$, where $p_i$ is a point of the above subdivision of $\RR$. It is denoted by $\Sigma$, and is depicted on Figure \ref{figure fan of sigma}. The fan $\Sigma$ defines an almost toric variety $\T$: it is a partial compactification of $(\CC^*)^2$ by a chain of $\CC P^1$, glued along their toric boundary. The ``almost toric" is because the variety is not constructed from a finite fan. This non-finite fan projects to $\RR_+$, giving a map from $\T$ to $\CC$. The variety $\T$ is then fibered over $\CC$, the special fiber being the chain of $\CC P^1$.

\begin{figure}[h]
\begin{center}
\begin{tikzpicture}[line cap=round,line join=round,>=triangle 45,x=0.2cm,y=0.2cm]
\clip(-10,-1) rectangle (10,15);
\draw [line width=1pt,->] (-10,0)-- (10,0);
\draw [line width=1pt,->] (0,-1)-- (0,10);

\draw [line width=1.5pt] (0,0)-- (-10,10);
\draw [line width=1.5pt] (0,0)-- (-8,10);
\draw [line width=1.5pt] (0,0)-- (-6,10);
\draw [line width=1.5pt] (0,0)-- (-4,10);
\draw [line width=1.5pt] (0,0)-- (-2,10);
\draw [line width=1.5pt] (0,0)-- (0,10);
\draw [line width=1.5pt] (0,0)-- (2,10);
\draw [line width=1.5pt] (0,0)-- (4,10);
\draw [line width=1.5pt] (0,0)-- (6,10);
\draw [line width=1.5pt] (0,0)-- (8,10);
\draw [line width=1.5pt] (0,0)-- (10,10);

\draw [line width=1.5pt,dotted] (6,3)-- (9,3);
\draw [line width=1.5pt,dotted] (8,7)-- (9,7);
\draw [line width=1.5pt,dotted] (-6,3)-- (-9,3);
\draw [line width=1.5pt,dotted] (-8,7)-- (-9,7);

\begin{scriptsize}

\end{scriptsize}
\end{tikzpicture}
\caption{\label{figure fan of sigma} The fan $\Sigma$ defining the variety $\T$.}
\end{center}
\end{figure}
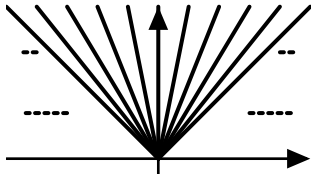

\medskip

The action of $\ZZ$ on $(\CC^*)^2$ generated by the map $(t,z)\mapsto(t,a t^l z)$ descends to an action on the fan. The action on $\RR^2$ is given by the following matrix:
$$\begin{pmatrix}
1 & 0 \\
l & 1 \\
\end{pmatrix},$$
\textit{i.e.} the shear transformation of size $l$ parallel to the horizontal direction. Taking the quotient by the action, the quotient of $\T$ is a family of elliptic curves $\E_t=\CC^*/\langle a t^l\rangle$ that degenerates to the central fiber $\E_0$, which is a chain of $\CC P^1$ glued along their toric boundary. The difference is that the chain makes a cycle.

\item Similarly, we consider the cone over $\Xi_1$ gives a fan inside $\RR^3$. A cone of $\Xi$ is generated by $(1,\sigma)$, where $\sigma$ is some cell of $\Xi_1$. The fan $\Xi$ defines an almost toric variety $\T'$ of dimension three. It is a partial compactification of $(\CC^*)^3$ whose boundary consists of toric surfaces glued along their toric divisors.

The fan $\Xi_1$ is mapped to $\Sigma$ by $(t,x,y)\mapsto(t,x)$. We consider the $\ZZ$-action on $(\CC^*)^3$ generated by the map
$$(t,z,w)\longmapsto(t,a t^l z,z^\delta w).$$
This action is compatible with the previous $\ZZ$-action on $(\CC^*)^2$, and the projection $(t,z,w)\mapsto t$. The quotient of $\T'$ by the $\ZZ$-action is then a family $\X_t$ that admits a factorization $\X_t\to\E_t$. This maps realizes $\X_t$ as a line bundle of degree $\delta$ over the elliptic curve $\E_t$.

The central fiber $\X_0$ is a union of toric surfaces glued along their toric boundary.
\end{itemize}

The previous description extends the setting of a degeneration of toric surfaces to a more general setting. It is possible to be more general by considering the quotient of $(\CC^*)^3$ by a map $(t,z,w)\mapsto(t,a t^l z,b t^a z^\delta w)$, but up to a change of basis, we can assume that the map is of the assumed form. In the case of a degree $0$ line bundle, it is of the form
$$(t,z,w)\mapsto(t,a t^l z,b t^{\alpha} w).$$

We have obtained a family of elliptic curves $\E_t$ that degenerates to a union of $\CC P^1$ glued over their toric divisors, and a degeneration of the degree $\delta$ line bundles over $\E_t$ to a union of toric varieties glued over their toric divisors. We can now apply the method from \cite{nishinou2006toric} to get a correspondence theorem. We refer the reader to it for more details. We summarize the main steps:
\begin{itemize}[label=-]
\item Endow the special fiber $\X_0$, which is a union of toric surfaces glued along their toric divisors, with a log-structure obtained from the toric construction.
\item Find the \textit{prelog-curves} in the special fiber $\X_0$, \textit{i.e.} the reducible nodal curves that matches the point constraints. These curves are the limit curves of the families of curves inside $\X_t$ that match the constraints.
\item Given a prelog curve, find the possible log-structures on it so that they can be deformed to the general fiber.
\end{itemize}
The count of the solutions yields the expected multiplicity $m^\CC_\Gamma=\prod m_V$. It is obtained as follows:
\begin{itemize}[label=-]
\item The complement of the marked points on a curve $\Gamma$ passing through $\P$ is a union of trees, each one possessing a unique unbounded end. Indeed, there are no cycles, otherwise the curve would vary in a one parameter family. Analogously, there are no components with at least two unbounded ends. Thus, there is exactly one unbounded end in each component.
\item When looking for the prelog-curves, their search can be done by pruning the trees in the forest from the previous point. Each vertex provides $\frac{m_V}{w_{V1}w_{V2}}$ solutions, where $w_{VA}$ and $w_{V2}$ are the weights of the adjacent edges not in direction of the unbounded end of the tree.
\item The upgrading from prelog-curves to log-curves gives a $w$ factor for each edge of each tree. In particular edges which are not marked contribute $w$, and edges marked $w^2$.
\end{itemize}
The product yields the expected multiplicity. The case of relative invariants is treated similarly.

\end{proof}

\begin{rem}
In the case of superabundant loops, the element $\alpha$ is torsion in $\mathrm{Pic}(E)$. It is also possible to find a correspondence theorem by choosing a family of line bundles that are also torsion in $\mathrm{Pic}(\E)$. The correspondence works the same way for simple tropical curves. Superabundant loops might be dealt with separately, lifting them as elliptic curve inside $\CC F_{0,\alpha}$.
\end{rem}

\begin{rem}
In some sense, the proof of the correspondence theorem consists in observing that the proof from Nishinou-Siebert \cite{nishinou2006toric} generalizes for non-superabundant tropical curves inside a family of varieties that degenerates to a union of toric varieties glued along their toric divisors, and not only toric ones. This approach is also used in \cite{nishinou2020realization} for curves in abelian surfaces, with the crucial difference that in this case the curves are now all superabundant.
\end{rem}

\subsection{Refined enumeration}

Following the correspondence theorem and the definition of the classical complex multiplicity, we set the following definition.

\begin{defi}
Let $h:\Gamma\to\TT F$ be a simple irreducible parametrized tropical curve. Its \textit{refined multiplicity} is defined as follows:
$$m^q_\Gamma=\prod_V [m_V]_q\in\ZZ[q^{\pm 1/2}],$$
where $[a]_q=\frac{q^{a/2}-q^{-a/2}}{q^{1/2}-q^{-1/2}}$ denotes the $q$-analog of $a$.
\end{defi}

For a generic choice of conditions $\P$ inside $\TT F$, we can now make the count of tropical solutions using instead the refined multiplicity, getting Laurent polynomials
$$BG^{\delta,\mathrm{trop}}_{g,(d_1,d_2)}(\P)=\sum_{h(\Gamma)\supset\P} m^q_\Gamma,$$
$$\text{and }BG^{\delta,\mathrm{trop}}_{g,(d_1,d_2)}(\mu_0,\mu_\infty,\nu_0,\nu_\infty)(\P)=\frac{1}{I_q^{\mu_0}I_q^{\mu_\infty}}\sum_{h(\Gamma)\supset\P} m^q_\Gamma, $$
where $I_q^\mu=\prod [i]_q^{\mu_i}$. As usual, one can count reducible curves, and their count is denoted with a $\bullet$. We have the following invariance theorem.

\begin{theo}\label{theorem refined invariance}
The refined count $BG^{\delta,\mathrm{trop}}_{g,(d_1,d_2)}(\P)$ (resp. $BG^{\delta,\mathrm{trop}}_{g,(d_1,d_2)}(\mu_0,\mu_\infty,\nu_0,\nu_\infty)$) does not depend on the choice of $\P$ as long as it is generic. (resp. with $\bullet$)
\end{theo}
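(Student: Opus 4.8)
The plan is to reduce the invariance statement to a local, finite-dimensional analysis near the walls of the relevant moduli space, exactly as in the classical proof of Itenberg--Mikhalkin \cite{itenberg2013block}, adapted to the cylinder $\TT F_\delta$. First I would set up the combinatorial space of point configurations: the configuration space of $\delta d_1+2d_2+g-1$ points (resp. $|\nu_0|+|\nu_\infty|+g-1$ free points together with the $|\mu_0|$ points on $E_0$ and $|\mu_\infty|$ points on $E_\infty$) is a manifold of the expected dimension, and outside a codimension-$1$ "discriminant" $\Delta$ the count of tropical curves through $\P$ is locally constant, since simple trivalent tropical curves with fixed combinatorial type deform in an open set and the evaluation map is a local homeomorphism there. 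The refined count $BG^{\mathrm{trop}}$ is then locally constant on the complement of $\Delta$, and since the configuration space minus $\Delta$ is connected once codimension-$2$ strata are also removed, it suffices to check that $BG^{\mathrm{trop}}$ does not jump when $\P$ crosses a generic point of a wall, i.e. a codimension-$1$ stratum.

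Next I would enumerate the types of walls. As in the planar case, a generic wall-crossing forces exactly one of the following degenerate phenomena in the tropical curves through $\P$: (i) a four-valent vertex appears, locally resolvable into three distinct trivalent combinatorial types (the "$\Psi$" or triangle move); (ii) an edge shrinks to length zero producing a vertex where a marked point or an end collides; or (iii) — special to $\TT F_{0,\alpha}$ with $\alpha\in\QQ l$ — a loop wrapping the cylinder degenerates, but since we have assumed $\alpha$ is non-torsion (or $\delta\neq 0$) this case does not occur, which is why the hypothesis is in place. For each remaining wall I would write down the finitely many tropical curves on each side that differ only in a small disk around the degeneration, all other data being identical, and verify that the sum of their refined multiplicities agrees across the wall. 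Because $m^q_\Gamma$ is a product over vertices, everything away from the disk cancels, and the identity to be checked is a purely local identity among $q$-analogs of small determinants.

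The heart of the argument is therefore the local identity at a four-valent vertex: for a four-valent vertex with outgoing primitive slopes $v_1,v_2,v_3,v_4$ (with $\sum w_iv_i=0$), the three ways of splitting into two trivalent vertices contribute, on the two sides of the wall, multiplicities whose product one must match. In the unrefined case this is the elementary relation $|\det(v_1,v_2)|\,|\det(v_1+v_2,v_3)| = |\det(v_2,v_3)|\,|\det(v_1,v_2+v_3)| + \cdots$ coming from bilinearity of the determinant; the refined statement is the well-known quantum identity $[a]_q[b]_q$-type relation, namely that for integers with the appropriate additivity one has $[|\det(v_1,v_2)|]_q[|\det(v_1+v_2,v_3)|]_q$ summed over the two complementary resolutions equal to the analogous sum, which follows from the identity $[x]_q[y+z]_q+[y]_q[z+x]_q+[z]_q[x+y]_q$ vanishing-type relations used in \cite{itenberg2013block,block2016refined}. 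I would quote this lemma and check that the determinants arising here, computed in any affine chart of $\TT F_\delta$ (legitimate since transition maps are in $SL_2(\ZZ)\ltimes\RR^2$ up to the shear, which preserves determinants), satisfy the hypotheses. The only genuinely new bookkeeping compared with $\RR^2$ is that a wall-crossing curve may have the degeneration happen on an edge that crosses the seam $\{0\}\times\RR=\{l\}\times\RR$; here I would invoke the cutting procedure to transport the local picture to $\RR^2$, where the planar analysis applies verbatim, and note that the shear $(u,v)\mapsto(u,v-\delta u)$ has determinant $1$ so does not affect any multiplicity. The reducible ($\bullet$) case follows by the same argument applied componentwise, together with the observation that the relevant walls for a reducible configuration are unions of walls for the connected pieces. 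The main obstacle, then, is simply the careful case analysis of wall types on the cylinder and the verification that no new kind of wall (beyond those in the toric/planar setting and the excluded torsion loops) can contribute.
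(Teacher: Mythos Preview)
Your approach is essentially the same as the paper's: reduce to a local wall-crossing analysis and invoke the Itenberg--Mikhalkin identities from \cite{itenberg2013block}. The paper's proof is in fact shorter than your sketch, as it simply cites \cite{itenberg2013block} (together with Proposition~3.9 of \cite{gathmann2007numbers}) for both the enumeration of wall types and the local invariance at each wall, noting that the determinants are chart-independent.

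There is, however, a gap in your explicit list of walls. The paper lists three degenerations: (a) a single quadrivalent vertex; (b) a vertex coinciding with a marked point; and (c) \emph{two} quadrivalent vertices joined by a pair of parallel edges, i.e.\ a cycle of the curve shrinks to a segment. Your enumeration covers (a) and something like (b), but omits (c) entirely; your item (iii) about superabundant loops is not a wall type in this sense but rather the reason for the non-torsion hypothesis. Case (c) is essential for positive genus, which is always the situation here, and it is moreover the one case where the cylinder genuinely differs from $\RR^2$: as the paper remarks, if the relevant cycle is non-contractible in $\TT F_\delta$ then event (c) simply cannot occur (the cycle cannot shrink), so there is nothing to check---but one must say this. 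Your cutting-procedure argument handles local pictures near the seam, but it does not by itself address what happens when a cycle that wraps the cylinder is involved in the degeneration. Once you add case (c) and this topological observation, your proof matches the paper's.
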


\begin{proof}
The proof reduces to the proof of invariance in \cite{itenberg2013block}. We consider a generic path $\P_s$ between two generic choices of point configuration $\P_0$ and $\P_1$. Given the assumption, there are no superabundant curves that can appear. Moreover, assume only one of the points moves: $p_s\in\P_s$

We know that locally, a deformation of $p_s$ induces a deformation of the solutions, which have the same multiplicity since the latter is constant on the combinatorial types. We know that the complement of the marked points in any of the curves solution to the problem is a union of trees that each possess a unique unbounded end. Otherwise it would be possible to deform the curve in a one parameter family, contradicting the generic choice of the configuration. As the marked point $p_s\in\P_s$ moves, it is possible to deform any curve passing through $\P_s$. We have two possibilities:
\begin{itemize}[label=-]
\item both sides of the edge that $p_s$ splits belong to the same tree,
\item they belong to different components of $\Gamma\backslash h^{-1}(\P_s)$.
\end{itemize}
In the first case, only the unique path between both sides of the cut edge move. This is a cycle in $\Gamma$. Meanwhile in the second case, it is the path from the marked points to both unbounded ends in each tree that move. According to \cite{itenberg2013block}, which uses Proposition 3.9 in \cite{gathmann2007numbers}, the only possibilities that can occur when moving a point are:
\begin{enumerate}[label=(\alph*)]
\item the curve degenerates to a curve that has a quadrivalent vertex
\item the curve degenerates to a curve with a vertex that coincides with a marked point,
\item the curve degenerates to a curve that has two quadrivalent vertices related by a pair of parallel edges.
\end{enumerate}
In each case, the invariance of counts with refined multiplicity comes from \cite{itenberg2013block}.
\end{proof}

\begin{rem}
The only difference with the planar case is that if one of the marked points is the only marked point belonging to a cycle that is not homologically trivial in $\TT F$, then the event (c) cannot occur. In fact, one can also use the cutting process to prove the invariance locally by reducing to the planar case from \cite{itenberg2013block}.
\end{rem}

Only the degree $\delta$ is emphasized in the notation of the invariants. If $\delta\neq 0$, it is because for each tropical elliptic curve $\TT E$ there is a unique choice of surface $\TT F_\delta$, and a scaling argument shows that the invariant does not depend on the choice of $\TT E$ either. If $\delta=0$, the computation using floor diagrams proves that the count does not depend on the choice of $\TT E$ nor $\alpha$ as long as the latter is chosen generically. This can also be done as follows: apply the cutting procedure to get inside $\RR^2$, fix the configuration $\P$, and move the parameter of gluing $\alpha$ instead. The refined count does not depend on the choice of the constraints. The scaling argument takes care of the invariance with respect to $\TT E$.

\begin{rem}
It is possible to extend the result to the case of $\TT F_{0,\alpha}$ where there are superabundant loops, meaning that $\alpha$ is torsion. We have two possibilities:
\begin{itemize}
\item either we treat curves with a superabundant loop as reducible curves, and we need to define a refined multiplicity whose value does not really matter since these curves are deformed separately,
\item or we define the multiplicity by slightly deforming the gluing parameter $\alpha$ and deforming the curves along with it.
\end{itemize}
In the first case we obtain an invariant for each surface $\TT F_{0,\alpha}$ but that does depend on whether $\alpha$ is torsion or not. The second case corresponds to a way of counting curves so that the count does not depend on the choice of the parameter $\alpha$, even if it is torsion.

Concretely, the presence of superabundant loops makes appear reducible curves that would not appear otherwise. It is possible to make them into irreducible tropical curves as follows. Choose a parametrization of the reducible tropical curve by a disconnected graph. Each superabundant loop now corresponds to a circle. Choose an intersection point between the superabundant loop and the rest of the curve. Then, add an edge between the circle and the corresponding point on some other component of the parametrizing graph. The curve is irreducible but has a contracted edge. It is now possible to deform the curve when changing the gluing parameter $\alpha$. The deformation appears on Figure \ref{figure deforming superabundant loop}. The number of possibilities of deformations is exactly described by floor multiplicities.
\end{rem}

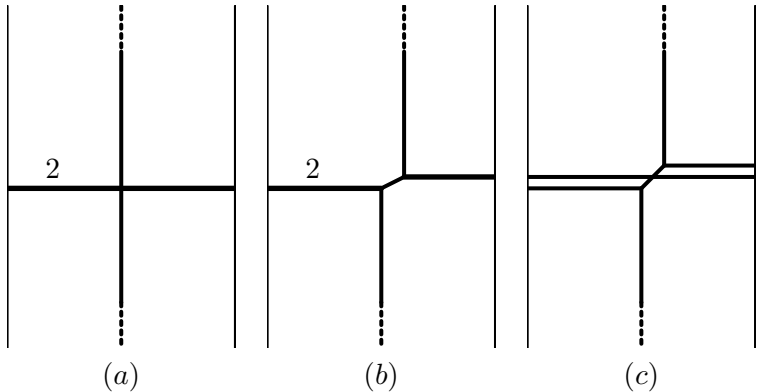
\begin{figure}
\begin{center}
\begin{tabular}{ccc}
\begin{tikzpicture}[line cap=round,line join=round,>=triangle 45,x=0.3cm,y=0.3cm]
\clip(0,0) rectangle (10,15);
\draw [line width=1pt] (0,0)-- (0,15);
\draw [line width=1pt] (10,0)-- (10,15);

\draw [line width=2pt] (0,7)-- (10,7);
\draw [line width=1.5pt] (5,2)-- (5,13);
\draw [line width=1.5pt,dotted] (5,2)-- (5,0);
\draw [line width=1.5pt,dotted] (5,15)-- (5,13);
\draw (2,7) node[above] {$2$};

\begin{scriptsize}

\end{scriptsize}
\end{tikzpicture}
&
\begin{tikzpicture}[line cap=round,line join=round,>=triangle 45,x=0.3cm,y=0.3cm]
\clip(0,0) rectangle (10,15);
\draw [line width=1pt] (0,0)-- (0,15);
\draw [line width=1pt] (10,0)-- (10,15);

\draw [line width=2pt] (0,7)-- (5,7);
\draw [line width=1.5pt] (5,7)-- (6,7.5);
\draw [line width=2pt] (6,7.5)-- (10,7.5);
\draw [line width=1.5pt] (5,2)-- (5,7);
\draw [line width=1.5pt] (6,7.5)-- (6,13);
\draw [line width=1.5pt,dotted] (5,2)-- (5,0);
\draw [line width=1.5pt,dotted] (6,15)-- (6,13);
\draw (2,7) node[above] {$2$};

\begin{scriptsize}

\end{scriptsize}
\end{tikzpicture}
&
\begin{tikzpicture}[line cap=round,line join=round,>=triangle 45,x=0.3cm,y=0.3cm]
\clip(0,0) rectangle (10,15);
\draw [line width=1pt] (0,0)-- (0,15);
\draw [line width=1pt] (10,0)-- (10,15);

\draw [line width=1.5pt] (0,7)-- (5,7);
\draw [line width=1.5pt] (5,7)-- (6,8);
\draw [line width=1.5pt] (6,8)-- (10,8);
\draw [line width=1.5pt] (0,7.5)-- (10,7.5);
\draw [line width=1.5pt] (5,2)-- (5,7);
\draw [line width=1.5pt] (6,8)-- (6,13);
\draw [line width=1.5pt,dotted] (5,2)-- (5,0);
\draw [line width=1.5pt,dotted] (6,15)-- (6,13);

\begin{scriptsize}

\end{scriptsize}
\end{tikzpicture}\\
$(a)$ & $(b)$ & $(c)$ \\
\end{tabular}
\caption{\label{figure deforming superabundant loop} A superabundant loop of weight $2$ and its deformations when changing the gluing parameter $\alpha$.}
\end{center}
\end{figure}

\section{Vertical floor diagrams}

We now intend to provide an algorithm to compute the count of parametrized tropical curves of bidegree $(d_1,d_2)$ in $\TT F$. This gives us the value of the invariants
$$N^{\delta}_{g,(d_1,d_2)} \text{ and }N^{\delta}_{g,(d_1,d_2)}(\mu_0,\mu_\infty,\nu_0,\nu_\infty),$$
as well as
$$BG^{\delta,\mathrm{trop}}_{g,(d_1,d_2)} \text{ and }BG^{\delta,\mathrm{trop}}_{g,(d_1,d_2)}(\mu_0,\mu_\infty,\nu_0,\nu_\infty).$$
We now allow ourselves to remove the ``cpx" and ``trop" from the notations, since Theorem \ref{theorem correspondence} states their equality. We start with the definition of a floor diagram, then we discuss how to construct a floor diagram from a tropical curve before giving the way to count the floor diagrams so that their count matches the previous invariants.

\subsection{Floor diagrams}
\label{subsection floor diagrams}

\begin{defi}
A \textit{floor diagram} $\Dfk$ of bidegree $(d_1,d_2)$ in $\TT F$ is an oriented weighted graph satisfying the following requirements:
\begin{enumerate}[label=(\alph*)]
\item the edges, called \textit{elevators} have a weight $w_e\in\NN$,
\item counted with weight, there are $\delta d_1+d_2$ unbounded elevators oriented inward, and $d_2$ oriented outward,
\item each vertex $\F$, called \textit{floor} has a weight $w_\F\in\NN$,
\item the sum of the weights of the floors is $d_1$,
\item there are no oriented cycles,
\item for a floor $\F$, its divergence $\div\F=\sum_{e\ni\F}\pm w_e$, with the sign of $\pm w_e$ depending on whether $e$ starts or ends at $\F$, is equal to $\delta w_\F$.
\end{enumerate}
The \textit{genus} of $\Dfk$ is by definition its genus as an unoriented graph plus its number of floors. The partitions $\mu_0\vdash \delta d_1+d_2$ and $\mu_\infty\vdash d_2$ induced by the unbounded elevators are called the tangency profile of the floor diagram.
\end{defi}

For a floor $\F$, its valency is equal to the number of elevators adjacent to it and is denoted by $\val(\F)$.

\begin{defi}
Let $\Dfk$ be a floor diagram of bidegree $(d_1,d_2)$ and tangency profile $(\mu_0,\mu_\infty)$. A \textit{marking} of $\Dfk$ is an increasing injection $\mathfrak{m}:[\![ 1;|\mu_0|+|\mu_\infty|+g(\Dfk)-1]\!] \to \Dfk$, where points are sent either to a floor or to an elevator of $\Dfk$, endowed with the partial order induced by the orientation, and such that
\begin{itemize}[label=-]
\item each floor $\F$ is marked by exactly one point,
\item each component of the complement of the markings on the elevators contains a unique unbounded elevator.
\end{itemize}
\end{defi}

\begin{rem}\label{remark marked ends}
To handle solutions of the enumerative problem where we do not only have point constraints but conditions on the ends of the curves, in the previous definition, one should allow the markings of a diagram to be located at ``infinity", \textit{i.e.} the end of the unbounded elevators. We then have to distinguish between \textit{fixed elevators}, which are elevators with a marking at their infinite extremity (corresponding to the $\mu_0$ and $\mu_\infty$ constraints), and \textit{marked elevators}, which are just elevators containing a marked point but not at their extremity.

In order to keep things simple, we keep the main framework of diagrams avoiding constraints at infinity, and put a remark to explain what changes in their presence.
\end{rem}

\begin{expl}
In Figure \ref{figure example marked floor diagram} we can see two examples of marked floor diagrams. As in \cite{brugalle2007enumeration}, we only mark the image of the markings, main difference being that it is not unique anymore since not all edges bear a marked point. The number of markings is then equal to the number of increasing maps to the diagram.

The first two marked diagrams are for $\delta=0$. They have the same underlying diagram but different markings. They are both of genus $2$ since there are two floors and no apparent cycle. The bidegree is $(5,3)$ as the sum of floors weights is $5$ while the sum of weights at top infinity is $3$.

The last two diagrams are for $\delta=1$. We can check that the balancing condition is satisfied at each floor, and that the complement of marked elevators contains a unique unbounded elevator in each of the marking. The genus is still $2$ while the bidegree is $(3,1)$.
\end{expl}

\begin{figure}[h]
\begin{center}
\includegraphics[scale=0.7]{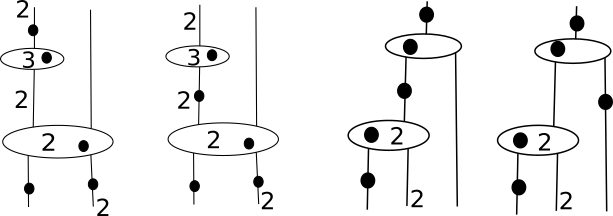}
\caption{\label{figure example marked floor diagram}Four floor diagrams with marking}
\end{center}
\end{figure}

We refer to section \ref{section example computations} for more examples of floor diagrams and markings on it.

\subsection{Floor diagrams from a tropical curve}

We now explain how to get a marked floor diagram $(\Dfk,\mathfrak{m})$ from a parametrized tropical curve $h:\Gamma\to\TT F$ passing through an vertically \textit{stretched} configuration of points. In all what follows, we consider a parametrized tropical curve $h:\Gamma\to\TT F$ of bidegree $(d_1,d_2)$ and fixed tangency profile $(\mu_0,\mu_\infty)$.

\begin{rem}
We only consider tropical curves only passing through point constraints, but everything also works when we add constraints on the position of some unbounded ends of the curves. The difference is that we get marked floor diagrams with fixed elevators instead of only marked elevators.
\end{rem}

\begin{defi}
A generic point configuration $\P$ on $\TT F$ is said to be \textit{stretched} if the difference between the $y$-coordinate of the points taken in the standard chart of $\TT F$ is very large when compared to the length $l$ of the elliptic curve $\TT E$.
\end{defi}

Before getting to the process, we give a few lemmas about the edges of parametrized tropical curves inside $\TT F$.

\begin{lem}\label{lemma bounded slope}
Let $h:\Gamma\to\TT F$ be a parametrized tropical curve of bidegree $(d_1,d_2)$. The slope of an edge can only take a finite number of values.
\end{lem}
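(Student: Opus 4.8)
The plan is to bound the slopes of the edges using the bidegree constraints together with the balancing condition, exploiting the fact that a parametrized tropical curve of bidegree $(d_1,d_2)$ has only finitely many unbounded ends, each of bounded weight. First I would apply the cutting procedure to replace $h:\Gamma\to\TT F_\delta$ by a genuine parametrized tropical curve $\widetilde{h}:\widetilde{\Gamma}\to\RR^2$ with $\delta d_1+d_2$ ends in direction $(0,-1)$, $d_2$ ends in direction $(0,1)$, and $2K$ further horizontal-ish ends coming from the $K$ points where $\Gamma$ meets the cut; the right-hand ends have slopes $(u_j,v_j)$ with $\sum_j u_j=d_1$ and the corresponding left-hand ends have slopes $(-u_j,\delta u_j-v_j)$. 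Since the $u_j$ are nonnegative integers summing to $d_1$, each $u_j\leqslant d_1$; the $v_j$ are then controlled because the $y$-coordinates at which the edge exits the right side of the strip and re-enters from the left must match, so $v_j$ is pinned down (up to the finitely many combinatorial types) by the geometry — in any case, for the purpose of this lemma one only needs that the multiset of end-slopes of $\widetilde{h}$ ranges over a finite set determined by $(d_1,d_2,\delta,\mu_0,\mu_\infty)$.

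Next I would run the standard argument that for a tropical curve in $\RR^2$ with a fixed finite set of possible end-directions and bounded total end-weight, every bounded edge has slope in a finite set. Concretely: fix a combinatorial type (there are finitely many, since the number of vertices is bounded by $g$ and the number of ends); orient $\widetilde{\Gamma}$ so that, say, all ends point "outward", and note that the primitive direction and weight of every edge is determined by the balancing condition from the ends lying "beyond" it in a spanning-tree decomposition. Cutting the $g$ independent cycles of $\widetilde{\Gamma}$ turns it into a tree, on which the weighted direction vector of each edge is exactly the sum of the (signed) weighted direction vectors of the ends in the corresponding branch — hence lies in a fixed finite set. Re-gluing the cycles only adds to each edge a vector in the finite-rank lattice spanned by the cycle classes, which again is constrained to a finite set once one recalls that the total weight (hence the $\ell^1$-norm of all edge vectors) is bounded in terms of $d_1,d_2,\delta$. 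Summing over the finitely many combinatorial types gives a single finite set of possible slopes, and slopes in $\TT F_\delta$ are read off in a chart from slopes in $\RR^2$, so the same finiteness holds upstairs.

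The main obstacle I anticipate is making precise that the cycle contributions are themselves bounded: a priori an edge could carry a large weight even though the curve has small bidegree, if a long thin cycle wraps many times. This is exactly where one uses that the curve is of bidegree $(d_1,d_2)$ in the sense defined above — its intersection number with a vertical fiber is $d_1$ and with the zero section is $d_2$ — which forces the total horizontal and vertical "flux" through any transversal to be bounded by a fixed linear expression in $d_1,d_2,\delta$; since every edge weight is bounded by such a flux, the $\ell^1$-norm of the edge-vector data is bounded, and a bounded set of integer vectors is finite. (In the degenerate case $\delta=0$, $\alpha\in\QQ l$ one must separately note that a superabundant loop of weight $d$ has slope a primitive multiple of $(n,k)$ with $n\mid d n$, still a finite set for fixed bidegree.) Once this boundedness is in hand, the finiteness of the slope set is immediate, and the lemma follows.
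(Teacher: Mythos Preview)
Your proposal is far more elaborate than what is needed, and the main line of argument has a genuine gap: after cutting, you claim the vertical components $v_j$ of the new end-slopes are ``pinned down'' by the matching of $y$-coordinates across the cut, but that matching is a \emph{position} constraint, not a slope constraint---it does not bound $v_j$ at all. Without a bound on the $v_j$, your spanning-tree argument (expressing an internal edge slope as a signed sum of end slopes) gives no bound either, so the whole combinatorial-type enumeration is built on sand.

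The fix you describe in your ``obstacle'' paragraph is in fact the entire proof, and renders the rest superfluous. The paper argues in two lines: for an edge of slope $(u,v)$, choose a fiber $F$ passing through a point of that edge; the edge alone contributes $|u|$ to $\Gamma\cdot F=d_1$, so $|u|\leqslant d_1$. Likewise, choosing the zero section $E_0$ through a point of the edge gives $|v|\leqslant \Gamma\cdot E_0=\delta d_1+d_2$ (not $d_2$, as you wrote). No cutting, no combinatorial types, no tree decomposition---just the observation that a single edge's local intersection index with a transversal curve is bounded by the global intersection number. Your flux remark is exactly this idea; promote it to the whole argument and discard the scaffolding.
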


\begin{proof}
Let $e$ be a bounded edge of $\Gamma$ with slope $(u,v)$. Assume $u\neq 0$. The intersection index with a fiber $F$ gives intersection number $|u|$, and it cannot be more than the intersection number between $\Gamma$ and $F$. So one has $|u|\leqslant \Gamma\cdot F=d_1$. Assuming $v\neq 0$, we can similarly intersect with a section and get this time $|v|\leqslant \Gamma\cdot E_0=\delta d_1+d_2$.
\end{proof}

\begin{lem}\label{lemma length bounded}
Let $h:\Gamma\to\TT F$ be a parametrized tropical curve of bidegree $(d_1,d_2)$ and let $e$ be a non-vertical edge of $\Gamma$ with slope $(u,v)$. Then there exists $M$ only depending on the bidegree $(d_1,d_2)$ and the length $l$ of $\TT E$ such that one has
$$l(e)\leqslant M.$$
\end{lem}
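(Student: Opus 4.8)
The plan is to convert the length bound into a count of intersections with a fiber. The key observation is that a non-vertical edge, when we unroll the cylinder $\TT F_\delta$ into $\RR^2$, becomes a straight segment whose horizontal extent grows linearly with $l(e)$; but a large horizontal extent forces many intersections with a vertical fiber, and the total number of such intersections is controlled by $d_1$. Concretely, I would apply the cutting procedure along a fiber $F$ chosen generically (avoiding the images of the vertices of $\Gamma$ and not containing any vertical edge), equivalently pass to the $\ZZ$-cover of $\TT F_\delta$ on which the deck transformation acts by the affine transvection $(x,y)\mapsto (x+l,y+\delta x)$. Since an edge is simply connected it lifts, and since $h$ is affine with integer slope in every chart and the deck transformation is affine, the lift $\widetilde e$ is a genuine straight segment in $\RR^2$ of slope $(u,v)$, hence of horizontal extent $|u|\cdot l(e)$, with $|u|\geqslant 1$ because $e$ is non-vertical (and $|u|\leqslant d_1$ by Lemma \ref{lemma bounded slope}, although this is not needed).

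Next I would run the counting step. The preimages of a fixed generic fiber $F$ are the vertical lines $\{x=c+kl\}_{k\in\ZZ}$, so the segment $\widetilde e$, having horizontal extent $|u|\, l(e)$, meets at least $|u|\,l(e)/l-1$ of them; downstairs this says that $e$ itself meets $F$ in at least $|u|\,l(e)/l-1$ points. Each such intersection point is transverse (by genericity of $F$) and contributes $|\det((u,v),(0,1))|=|u|$ to the tropical intersection number $\Gamma\cdot F$, and all contributions to $\Gamma\cdot F$ are non-negative. By the definition of bidegree, $\Gamma\cdot F=d_1$, so
$$|u|\left(\frac{|u|\,l(e)}{l}-1\right)\leqslant d_1,$$
which rearranges to $u^2\, l(e)/l\leqslant d_1+|u|$, hence
$$l(e)\leqslant \frac{(d_1+|u|)\,l}{u^2}\leqslant (d_1+1)\,l,$$
using $u^2\geqslant|u|\geqslant 1$. (If $|u|\,l(e)/l-1\leqslant 0$ the inequality $l(e)\leqslant (d_1+1)l$ is immediate.) Thus $M=(d_1+1)l$ works, and it depends only on $d_1$ and $l$.

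This argument is uniform in $\delta$: for $\delta=0$, i.e.\ inside $\TT F_{0,\alpha}$, the unrolling is by the translation $(x,y)\mapsto(x+l,y+\alpha)$, which is still affine, and the same count applies. I expect the only real care to be bookkeeping: making precise that the cutting procedure (or the passage to the $\ZZ$-cover) turns the edge into a straight segment and tracking that the horizontal slope is unchanged along it, plus the routine genericity of $F$. There is no serious obstacle; the content is entirely in the elementary inequality relating horizontal extent, number of fiber crossings, and the intersection number $d_1$.
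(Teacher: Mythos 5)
Your proposal is correct and follows essentially the same route as the paper: bound the number of times the edge crosses a fixed fiber in terms of $l(e)/l$, note that each crossing contributes $|u|\geqslant 1$ to the intersection number $\Gamma\cdot F=d_1$, and deduce $l(e)\leqslant (d_1+1)l$. Your version is in fact slightly more careful than the paper's (which counts $\lfloor l(e)/l\rfloor$ crossings rather than using the horizontal extent $|u|\,l(e)$, and writes "at most" where a lower bound on the number of crossings is what is used), but the idea and the constant $M=(d_1+1)l$ are identical.
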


In other words, Lemma \ref{lemma length bounded} asserts that there is a uniform bound on the length of non-vertical edges for curves of a fixed bidegree in $\TT F$.

\begin{proof}
Let $e$ be a bounded edge of $\Gamma$. If the length of $e$ is too big, it rolls several times around the cylinder and provides too many intersection points with a fiber $F$. In fact, it provides at least $\left\lfloor \frac{l(e)}{l} \right\rfloor$ intersection points, which each contribute $|u|$ to the intersection index. Then, one gets that $\left\lfloor \frac{l(e)}{l} \right\rfloor|u|\leqslant d_1$. Hence, as $|u|\geqslant 1$, it suffices to take $M=l\left(d_1+1\right)$.
\end{proof}

\begin{defi}
A vertical edge of $\Gamma$ is called an \textit{elevator}. A connected component of the complement of vertical edges is called a \textit{floor}.
\end{defi}

We now assume that our configuration of points $\P$ is stretched in the vertical direction, meaning the difference between the heights of two points in the configuration are very large when compared to $l,d_1,d_2$ and $\delta$. We also assume that $\Gamma$ passes through $\P$. We define a diagram $\Dfk(\Gamma)$ from $\Gamma$ as follows, and then show that it is actually a floor diagram:
\begin{itemize}[label=-]
\item The vertices of $\Dfk$ are the floors of $\Gamma$. The weight of a floor $\F$ is its intersection number with a fiber $F$.
\item The edges of $\Dfk$ are the elevators of $\Gamma$, oriented with an increasing $y$-coordinate. The weight of an elevator is its weight as an edge of $\Gamma$, \textit{i.e.} the integral length of the slope of $h$ on it.
\item The floor diagram $\Dfk$ is marked by the marked points of $\P$.
\end{itemize}
If $\Gamma$ has tangency profile $(\mu_0,\mu_\infty)$, so has $\Dfk$. We now have to check the various requirements to ensure that with the marking $\mathfrak{m}$ provided by the marked points $\P$, $(\Dfk,\mathfrak{m})$ is indeed a floor diagram of bidegree $(d_1,d_2)$, tangency profile $(\mu_0,\mu_\infty)$, and of genus $g(\Dfk)=g(\Gamma)$. This is the content of the following proposition.

\begin{prop}
Let $h:\Gamma\rightarrow\TT F$ be a tropical curve of bidegree $(d_1,d_2)$ passing by the stretched configuration $\P$, and let $\Dfk$ be the graph constructed by the preceding construction. Then, one has
\begin{enumerate}[label=$(\roman*)$]
\item the floors weights satisfy $\sum_\F w_\F=d_1$,
\item for each floor $\F$ of $\Gamma$ one has $\div\F=\sum_{e\ni\F}\pm w_e$,
\item each floor contains exactly one marked point,
\item each elevator contains at most one marked point,
\item the floors have genus $1$, and in particular $g(\Dfk)=g(\Gamma)$,
\item each floor is adjacent to at least one unmarked elevator, and components of the complement of the marked elevators contain a unique unbounded elevator.
\end{enumerate}
In particular, $\Dfk$ is a floor diagram, and $\mathfrak{m}$ is a marking on $\Dfk$.
\end{prop}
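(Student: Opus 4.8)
The plan is to verify the six assertions in the proposition roughly in the order listed, with the genus-computation $(v)$ being the conceptual heart of the argument and the place where the stretchedness hypothesis on $\P$ is really used. Throughout I would work with the cutting procedure: cut $\TT F_\delta$ along $\{0\}\times\RR=\{l\}\times\RR$ to produce an honest planar tropical curve $\widetilde h:\widetilde\Gamma\to\RR^2$, so that the floors of $\Gamma$ become the ``horizontal pieces'' of $\widetilde\Gamma$ and the elevators become the vertical edges. The point is that in $\RR^2$ a floor is a subtropical curve whose bounded edges are non-vertical, hence by Lemma \ref{lemma length bounded} all have length at most $M$, a bound independent of $\Gamma$; since $\P$ is stretched, each floor lives in a thin horizontal band around exactly one marked point, and vertical elevators are long.

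First I would do $(i)$ and $(ii)$, which are purely homological bookkeeping. For $(i)$: the weight of a floor was defined to be its intersection number with a generic fiber $F$, and since the fibers are vertical they meet only the floors (not the elevators), so $\sum_\F w_\F$ equals the total intersection number $\Gamma\cdot F=d_1$ by the bidegree computation already established in the tropical section. For $(ii)$: fix a floor $\F$ and intersect it with a fiber $F$ placed just above it and with a fiber placed just below it — the difference of these two intersection numbers with $E_0$, computed via the balancing condition applied to $\F$ together with the Menelaus-type relation of Theorem \ref{theorem menelaus tropical bundle} restricted to the piece of the curve below a given height, gives $\div\F=\delta w_\F$; equivalently, the sum of the vertical slopes of the elevators leaving $\F$ minus those entering, which is exactly $\sum_{e\ni\F}\pm w_e$, must compensate the $\delta w_\F$ coming from the $w_\F$ strands of $\F$ crossing the nontrivial lattice identification of the cylinder. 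This is the tropical shadow of $E_0\cdot E_0=\delta$ and of Theorem \ref{theorem complex menelaus}.

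Next, $(iii)$ and $(iv)$: these follow from a dimension/genericity count combined with stretchedness. A floor $\F$ is a connected non-vertical subcurve; if it carried no marked point, then since the point configuration is generic the piece $\F$ could be translated vertically freely (its attaching elevators are vertical and hence accommodate a vertical shift), producing a one-parameter family of curves through $\P$, contradicting finiteness from the correspondence theorem — so each floor has at least one marked point; a parallel count on the total number of marked points, $|\mu_0|+|\mu_\infty|+g(\Gamma)-1$, versus the number of floors plus the number of ``gaps'' needed on elevators (one marked point per bounded segment of the elevator forest, cf. the tree-decomposition argument in the proof of Theorem \ref{theorem correspondence}) forces exactly one marked point per floor and at most one per elevator. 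Then $(vi)$ is the same tree-decomposition statement: removing the marked points from $\Gamma$ yields a forest each of whose components has a unique unbounded end (no cycles, else a deformation; no two unbounded ends, else again a deformation), and the components lying on elevators are exactly the pieces of the elevator forest, each containing one unbounded elevator.

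The main obstacle is $(v)$, the claim that every floor has genus $1$ (and hence $g(\Dfk)=g(\Gamma)$, the genus of $\Dfk$ being defined as its graph genus plus its number of floors). Here I would argue as follows. Each floor $\F$, viewed after cutting inside $\RR^2$, is a planar tropical curve passing through exactly one marked point (by $(iii)$); by the usual planar dimension count a planar tropical curve through a single generic point that is part of a configuration making the whole curve rigid cannot carry a positive-genus cycle that is ``free'' — but $\F$ does wrap around the cylinder, and the single nontrivial loop of $\TT F_\delta$ restricted to the band containing $\F$ contributes exactly one to the first Betti number. More precisely: the elevators are bridges of $\Gamma$ in the stretched regime (a cycle of $\Gamma$ using an elevator would have to come back down, forcing a second long vertical edge and an intersection count with a fiber exceeding $d_1$, impossible by Lemma \ref{lemma bounded slope}), so $b_1(\Gamma)=\sum_\F b_1(\F)$; meanwhile each floor, being a connected curve that genuinely goes around the base circle $E$ and realizes some positive fiber-degree $w_\F$, must close up, contributing exactly one independent cycle (the ``horizontal loop''), and genericity of $\P$ rules out any further cycle inside a single floor since that floor sees only one point constraint. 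Hence $b_1(\F)=1$ for every floor, $g(\Gamma)=b_1(\Gamma)=\#\{\text{floors}\}+b_1(\Dfk_{\mathrm{unor}})-\#\{\text{floors}\}+\dots$ — more cleanly, the unoriented graph underlying $\Dfk$ is obtained from $\Gamma$ by contracting each floor (a genus-$1$ piece) to a point, so $b_1(\Gamma)=b_1(\Dfk_{\mathrm{unor}})+\#\{\text{floors}\}=g(\Dfk)$. Assembling $(i)$–$(vi)$ then yields directly that $(\Dfk,\mathfrak m)$ satisfies all the axioms (a)–(f) of a floor diagram and that $\mathfrak m$ is a marking, which is the assertion of the proposition.
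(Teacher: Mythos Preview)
Your overall architecture matches the paper's, and parts $(i)$, $(ii)$, and $(vi)$ are handled essentially the same way. There is, however, one genuine error and a couple of places where your arguments are more roundabout than the paper's.

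The error is in $(v)$: your claim that \emph{elevators are bridges of $\Gamma$} is false. Vertical edges have zero intersection with a fiber, so a cycle containing two elevators (going up one, down the other) costs nothing against the bound $d_1$; and indeed the paper's own examples (the diagrams for $N_{2,(d,2)}$ and $N_{3,(d,w)}$) have pairs of elevators between the same two floors. Consequently the equality $b_1(\Gamma)=\sum_\F b_1(\F)$ that you deduce from the bridge claim is wrong in general. You recover at the very end with the correct contraction relation $b_1(\Gamma)=b_1(\Dfk)+\sum_\F b_1(\F)$, but that sentence contradicts the earlier one, so the argument as written is internally inconsistent. The paper's route to $(v)$ is shorter and avoids this entirely: since $\Gamma\backslash h^{-1}(\P)$ is a forest and a floor contains exactly one marked point (by $(iii)$), a floor of genus $\geqslant 2$ would still carry a cycle after deleting that point, contradicting the forest property. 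That is the whole argument.

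For $(iii)$ and $(iv)$ your global counting argument (``the right number of marked points forces exactly one per floor'') is circular as stated, since the count presupposes knowing the number of floors, which in turn depends on $(v)$. The paper instead argues locally: for $(iii)$, Lemmas \ref{lemma bounded slope} and \ref{lemma length bounded} bound the vertical extent of any floor by a constant independent of $\Gamma$, so two stretched points cannot lie on the same floor; for $(iv)$, genericity of $\P$ guarantees no two marked points share the same projection to $E$, hence no elevator can carry two of them. You do state the bounded-band picture in your preamble, so you have the right idea for $(iii)$; just use it directly rather than the count.
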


\begin{proof}
\begin{enumerate}[label=$(\roman*)$]
\item The total intersection number between $\Gamma$ and a generic fiber $F$ is $d_1$, and only the floors contribute to the intersection, since the elevators have also a vertical slope. The result follows.
\item We cut the curve along the strip to get an honest tropical curve inside $\RR^2$. The sum of all the slopes of the outgoing edges is $0$ by the balancing condition. By definition, the elevators adjacent to the floor contribute exactly to $(\div\F)\times (0,-1)$. Meanwhile, let $P$ be one of intersection points with right side of the strip. Let $(u_P,v_P)$ be its slope. The slope of the edge coming on the other side of the strip is $(u_P,v_P-\delta u_P)$ with ingoing orientation. Thus, taking the outgoing orientation, the two slopes add up to $(0,\delta u_P)$. Adding all the contribution, we get $(0,\delta \sum_P u_P)$. Moreover, the intersection index $w_\F$ between $\F$ and a fiber is precisely $\sum_P u_P$. Hence, by the balancing condition, one gets
$$\div\F=\delta w_\F.$$
\item By lemma \ref{lemma bounded slope}, the slope of a curve of bidegree $(d_1,d_2)$ can only take a finite set of values. By lemma \ref{lemma length bounded}, the length of non-vertical edges is bounded by the degree. As there are a finite number of edges, it is not possible to find a path from two points in the stretched configuration without having to pass through an elevator. Thus, a floor contains at most one marked point. It contains at least one otherwise it is possible to translate vertically the floor and get a $1$-parameter family of curves passing through $\P$, contradicting the genericity of $\P$.
\item As the configuration $\P$ is generic, no two marked points have the same projection to $\TT E$, and thus an elevator cannot contain more than one marked point.
\item As in the planar case and proved in \cite{mikhalkin2005enumerative}, the complement of the marked points $\Gamma\backslash h^{-1}(\P)$ is without cycle, and each component contains a unique unbounded end:
	\begin{itemize}[label=-]
	\item As solutions are trivalent, it is possible to deform cycles and paths between two unbounded ends, thus, there are no cycles, and at most one unbounded end per component.
	\item An Euler characteristic computation ensures that there is at least one end per component. Alternatively, a bounded component would give a relation between the marked points at its boundary, resulting in a non generic choice of constraints. Thus, there are at least one end per component.
	\end{itemize}
Thus, a floor cannot be of genus greater than $2$, otherwise, as it contains a unique marked point, the complement of the marked points would still contain some cycle.
\item Following the preceding point, not all the elevators adjacent to a floor can be marked, since it would lead to a bounded component in $\Gamma-h^{-1}(\P)$. The rest follows from the same observation.
\end{enumerate}
\end{proof}

\begin{rem}
The difference between the markings for diagrams on a cylinder and the planar case from \cite{brugalle2007enumeration}\cite{brugalle2008floor} is that the floors from a diagram in a cylinder do not contain any unbounded end from the tropical curve $\Gamma$. Thus, the unbounded end (which is vertical) is reached by a path of elevators.
\end{rem}

In particular, the point $(v)$ can be refined in the following proposition: each floor is a graph whose total space is homeomorphic to a circle. Then, the projection $\pi:\TT F\to \TT E$ restricts on the floor to a cover of $\TT E$.

\begin{prop}
Let $h:\Gamma\to\TT F$ be a genus $g$ parametrized tropical curve of bidegree $(d_1,d_2)$ passing through a stretched configuration of points $\P$. Each floor of $\Gamma$ is homeomorphic to a circle. The degree $w_\F$ of the floor splits as
$$w_\F=d_\F k_\F,$$
where $d_\F$ is the degree of the cover $\pi_\F:\F\to \TT E$, and $k_\F$ is the common horizontal coordinate of slope of $h$ on the edges of $\F$.
\end{prop}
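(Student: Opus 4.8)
The plan is to deduce both assertions from the trivalence and the immersion property of $\Gamma$, together with the facts established in the previous proposition that each floor has genus $1$ (equivalently, first Betti number $1$) and contains exactly one marked point. Since for a curve of bidegree $(d_1,d_2)$ every unbounded end is vertical (of slope $(0,\pm i)$), a floor $\F$ --- a connected component of the complement of the vertical edges, which by the marked-point count is not a stray isolated vertex --- is a finite connected graph built from bounded non-vertical edges, with elevators attached at some of its vertices. The key local observation, read off from the three outgoing slopes at a vertex $V$ of $\F$ via the balancing condition, is that $V$ cannot be adjacent to exactly two vertical edges: if two of its outgoing slopes are $(0,a)$ and $(0,b)$, the third has horizontal coordinate $0$ as well, hence is itself vertical. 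The immersion hypothesis rules out $V$ being adjacent to three vertical edges, since then all edges at $V$ would map into a single vertical line, making $V$ a flat vertex. Consequently every vertex of $\F$ is adjacent to at most one elevator, so $\F$ has no vertex of degree $1$ --- such a leaf would be a trivalent vertex with two adjacent elevators. A connected graph with first Betti number $1$ has as many edges as vertices, and if moreover it has no vertex of degree $1$ then twice its number of edges, being the sum of its degrees, forces every degree to equal $2$; such a graph is a single cycle. This proves the first assertion (and shows that exactly one elevator hangs off each vertex of the circle $\F$).

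For the splitting, consider the projection $\pi\colon\TT F_\delta\to E$, which is well defined because the gluing maps preserve the vertical fibration, and its restriction $\pi_\F\colon\F\to E$. Orient the cycle $\F$ and traverse it: at a vertex $V$ let $s_1$ be the traversal slope of the incoming floor-edge, $s_2$ that of the outgoing one, and $(0,a)$ the outgoing slope of the unique adjacent elevator; the balancing condition $-s_1+s_2+(0,a)=0$ gives $s_2=s_1+(0,-a)$, so the horizontal coordinate of the traversal slope is one and the same integer $k_\F$ on every edge of $\F$. As $\F$ has no vertical edge, $k_\F\neq0$, and after choosing the orientation appropriately $k_\F>0$; then the $E$-coordinate of $h$ increases strictly along the traversal, so $\pi_\F$ is a covering of the circle $E$ by the circle $\F$, of some degree $d_\F\geq1$. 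Finally $w_\F$ is by definition the intersection number of $\F$ with a generic fiber $F=\pi^{-1}(\mathrm{pt})$; this fiber meets $\F$ transversally in the $d_\F$ preimages of a generic point, and at each of them the local intersection index is $\lvert\det((k_\F,\ast),(0,1))\rvert=k_\F$, whence $w_\F=d_\F k_\F$.

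The only genuinely delicate step is the local claim that a trivalent vertex lying on a floor carries at most one elevator; the remainder is bookkeeping with the balancing condition and elementary graph theory. I would also take care to confirm the harmless points that a floor is never a single vertex with only vertical adjacent edges (such a vertex is flat, or is the valence-one endpoint of an unbounded vertical end, hence belongs to no floor) so that $\F$ genuinely has edges and $k_\F$ is meaningful, and that the balancing computation gives the two floor-slopes at each vertex the \emph{same} horizontal coordinate, with the same sign in the chosen traversal --- this is exactly what makes $\pi_\F$ a genuine covering map rather than a map that folds back on itself.
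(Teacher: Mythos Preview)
Your proof is correct and follows essentially the same approach as the paper: both hinge on the local balancing observation that a trivalent vertex with two vertical outgoing edges forces the third to be vertical as well, from which one concludes the floor has no leaves and hence (being of first Betti number $1$) is a cycle. The paper phrases the combinatorial step slightly differently---an edge outside the unique cycle would be separating, yielding a tree whose leaves violate the local observation---whereas you use the handshaking identity $\sum\deg=2E=2V$ to force all degrees equal to $2$; both packagings are equivalent and the remaining computation of $w_\F=d_\F k_\F$ is identical.
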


\begin{proof}
As the curve is trivalent, and the floor $\F$ contains a unique cycle, if an edge of $\F$ was not contained in the cycle, it would be separating. In particular, on one side would be a graph without cycle whose vertices are at some point adjacent to elevators. Thus, their slopes would be vertical by the balancing condition. Hence, $\F$ is homeomorphic to a circle and every vertex of $\F$ must be adjacent to some elevator. In particular, the horizontal slope of the edges on $\F$ is constant using to the balancing condition. Let us denote this number by $k_\F >0$. Furthermore, there are $d_\F$ points in $\F$ resulting from the intersection of $\F$ with a fiber $F$. Each of them contributes $k_\F$ to the intersection between $\F$ and $F$. Hence,
$$w_\F=d_\F k_\F.$$
\end{proof}

\begin{rem}
The possible splittings of $w_\F$ as a product $dk$ are partially depicted on Figure \ref{figure deforming superabundant loop} $(b)$ and $(c)$. The integer $d$ is the number of turns that the floor makes, and $k$ the ``speed" at which it makes them.
\end{rem}

\subsection{Multiplicities of floor diagrams}

\subsubsection{Classical multiplicity.} We now define the multiplicity of a marked floor diagram $(\Dfk,\mathfrak{m})$ so that it matches the sum of the multiplicities of the tropical curves $h:\Gamma\to\TT F$ it encodes. The fact that the multiplicity is the right one is proven in Theorem \ref{theorem count floor diagrams}.

\begin{defi}\label{definition multiplicity floor diagram}
Let $(\Dfk,\mathfrak{m})$ be a marked floor diagram. The multiplicity $m(\F)$ of a floor $\F$ is
$$m(\F)=w_\F^{\val(\F)-1}\sigma_1(w_\F),$$
where $\sigma_1(n)=\sum_{d|n}d$ is the sum of divisors. Let $\Dfk^\infty_{m}$ (resp. $\Dfk^\infty_{um}$, $\Dfk^b_{m}$, $\Dfk^b_{um}$) be the set of unbounded elevators which are marked (resp. unbounded unmarked, bounded marked, bounded unmarked). The multiplicity of $(\Dfk,\mathfrak{m})$ is defined as follows:
$$m(\Dfk,\mathfrak{m})=\prod_\F m(\F) \prod_{e\in\Dfk^\infty_{m}}w_e \prod_{e\in\Dfk^b_{m}}w_e^2\prod_{e\in\Dfk^\infty_{um}}w_e^2\prod_{e\in\Dfk^b_{um}}w_e^3,$$
where the first product is over the floors of $\Dfk$.
\end{defi}

\begin{rem}\label{remark multiplicity fixed ends}
In the presence of constraints on the unbounded ends, \textit{i.e.} when considering the relative invariants $N^\delta_{g,(d_1,d_2)}(\mu_0,\mu_\infty,\nu_0,\nu_\infty)$, we get fixed elevators, corresponding to elements in the partition $\mu_0$ or $\mu_\infty$. The diagram multiplicities are handled as in \cite{gathmann2007caporaso}, and we take as diagram multiplicity $\frac{1}{I^{\mu_0+\mu_\infty}}m(\widetilde{\Dfk},\widetilde{\mfk})$, where $(\widetilde{\Dfk},\widetilde{\mfk})$ is the marked diagram where the infinite markings have been replaced by finite marking on their respective elevators, diagram for which the multiplicity is defined above.
\end{rem}

We refer to the proof of Theorem \ref{theorem count floor diagrams} for details but briefly explain the meaning of the different terms in the multiplicity. The floor multiplicity corresponds the various possible shapes of the floor and the product of the vertices on it, as partially depicted on Figure \ref{figure deforming superabundant loop} $(b)$ and $(c)$. In a sense, it corresponds to the computation of some $N^\delta_{1,(w_\F,-)}(\mu_0,\mu_\infty,\nu_0,\nu_\infty)$, where $|\nu_0|+|\nu_\infty|=1$. Choosing a shape of floor amounts to choose a splitting $w_\F=d_\F  k_\F$, and the sum over these possibilities makes appear the $\sigma_1(w_\F)$. Concerning elevators, as each elevator is adjacent to two vertices, it contributes to a factor $w_e^2$ to the multiplicity. The contribution $w_e^3$ of the unmarked elevators comes from the fact that contrarily to the planar case, fixing the elevators adjacent to a vertex and the position of a point is not enough to ensure a unique solution. In fact, there are precisely $w_e$ possible positions, where $w_e$ is the weight of the unmarked elevator in the direction of the unique unmarked unbounded elevator.

\begin{expl}
The multiplicities of the marked diagrams on Figure \ref{figure example marked floor diagram} are respectively:
\begin{itemize}[label=-]
\item $m(\Dfk_1,\mfk_1)=2^3\sigma_1(2)\cdot 3^1\sigma_1(3)\cdot 2\cdot 2^3\cdot 2,$
\item $m(\Dfk_2,\mfk_2)=2^3\sigma_1(2)\cdot 3^1\sigma_1(3)\cdot 2\cdot 2^2\cdot 2^2,$
\item $m(\Dfk_3,\mfk_3)=2^2\sigma_1(2)\cdot 1^2\sigma_1(1)\cdot 2^2,$
\item $m(\Dfk_4,\mfk_4)=2^2\sigma_1(2)\cdot 1^2\sigma_1(1)\cdot 2^2.$
\end{itemize}
The first and second diagram have each two possible markings since we need to choose which of the marking is on the bottom end of weight $2$. The third diagram has only one marking but the last one has three since we need to choose which of the three markings lies on the right unbounded elevator.
\end{expl}

\subsubsection{Refined multiplicity.} The multiplicity giving the refined invariants is more complicated to handle, since its value does not only depend on the valency and weight of a floor, but also on the weights of the adjacent elevators. With the same notations as in Definition \ref{definition multiplicity floor diagram}, we set the following.

\begin{defi}\label{definition refined multiplicity floor diagram}
Let $(\Dfk,\mathfrak{m})$ be a marked floor diagram. Let $\F$ be a floor, and $(u_i)_{1\leqslant i\leqslant N}$ the weights of the adjacent elevators. The refined multiplicity $m^q(\F)$ of a floor $\F$ is
$$m^q(\F)=\sum_{kd=w_\F} d^{N-1}\prod_{i=1}^N \frac{[ku_i]_q}{[u_i]_q}= \sum_{kd=w_\F} d^{N-1}\prod_{i=1}^N \frac{q^{ku_i/2}-q^{-ku_i/2}}{q^{u_i/2}-q^{-u_i/2}}.$$
The refined multiplicity of $(\Dfk,\mathfrak{m})$ is defined as follows:
$$m^q(\Dfk,\mathfrak{m})=\prod_\F m^q(\F) \prod_{e\in\Dfk^\infty_{m}}[w_e]_q \prod_{e\in\Dfk^b_{m}}[w_e]_q^2\prod_{e\in\Dfk^\infty_{um}}w_e[w_e]_q\prod_{e\in\Dfk^b_{um}}w_e[w_e]_q^2,$$
where the first product is over the floors of $\Dfk$.
\end{defi}

\begin{rem}
The definition is adapted to the setting of fixed elevators by dividing by $I_q^{\mu_0+\mu_\infty}$.
\end{rem}

\subsection{Statement}

We now have the main result of the section, asserting that the count of marked floor diagrams coincides with the number of tropical curves.

\begin{theo}\label{theorem count floor diagrams}
One has
$$\sum_{(\Dfk,\mathfrak{m})}m(\Dfk,\mathfrak{m})=N^{\delta}_{g,(d_1,d_2)} \text{ and } \sum_{(\Dfk,\mathfrak{m})}m(\Dfk,\mathfrak{m})=N^\delta_{g,(d_1,d_2)}(\mu_0,\mu_\infty,\nu_0,\nu_\infty) ,$$
$$\sum_{(\Dfk,\mathfrak{m})}m^q(\Dfk,\mathfrak{m})=BG^{\delta,\mathrm{trop}}_{g,(d_1,d_2)} \text{ and } \sum_{(\Dfk,\mathfrak{m})}m^q(\Dfk,\mathfrak{m})=BG^{\delta,\mathrm{trop}}_{g,(d_1,d_2)}(\mu_0,\mu_\infty,\nu_0,\nu_\infty) ,$$
where the first sum is over the marked floor diagrams of genus $g$ and bidegree $(d_1,d_2)$, and the second sum over the marked floor diagrams of genus $g$, bidegree $(d_1,d_2)$  and ramification profile $(\mu_0+\nu_0,\mu_\infty+\nu_\infty)$, with elevators corresponding to $\mu_0$ and $\mu_\infty$ fixed.
\end{theo}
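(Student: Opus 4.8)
The plan is to compute both sides with respect to a configuration $\P$ stretched in the vertical direction and to reconstruct the contributing tropical curves floor by floor. By Theorem \ref{theorem correspondence} the numbers $N^\mathrm{trop}_{g,(d_1,d_2)}$, $N^\mathrm{trop}_{g,(d_1,d_2)}(\mu_0,\mu_\infty,\nu_0,\nu_\infty)$ and their refined analogues are independent of the generic configuration used to compute them, so I may take $\P$ stretched. By the results of the previous subsection, sending a tropical solution $h:\Gamma\to\TT F_\delta$ to the pair $(\Dfk(\Gamma),\mathfrak m)$, where $\mathfrak m$ is the marking induced by $\P$, defines a map to the set of marked floor diagrams of genus $g$, bidegree $(d_1,d_2)$ and the prescribed tangency (resp. ramification) profile. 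It then suffices to prove, for each such marked floor diagram $(\Dfk,\mathfrak m)$, the local equalities
\[
\sum_{\Dfk(\Gamma)=\Dfk}m^\CC_\Gamma=m(\Dfk,\mathfrak m)\qquad\text{and}\qquad\sum_{\Dfk(\Gamma)=\Dfk}m^q_\Gamma=m^q(\Dfk,\mathfrak m),
\]
the sums running over the tropical solutions inducing $(\Dfk,\mathfrak m)$; summing over all $(\Dfk,\mathfrak m)$ yields the four identities, the reducible case (with $\bullet$) being identical with possibly disconnected floor diagrams.

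Fix $(\Dfk,\mathfrak m)$. To analyze the local sums I would describe how a curve $\Gamma$ with $\Dfk(\Gamma)=\Dfk$ is assembled from $\Dfk$. Because $\P$ is stretched, the floors of $\Gamma$ occupy widely separated horizontal bands, and using the partial order on $\Dfk$ induced by its orientation one reconstructs $\Gamma$ one floor at a time: the complement $\Gamma\setminus h^{-1}(\P)$ is a forest each of whose trees carries a unique unbounded elevator, and pruning the trees from their marked leaves toward those unbounded ends, each step adds one floor together with the elevators attached to it. Realizing a floor $\F$ is precisely the enumerative problem of Proposition \ref{proposition floor multiplicity}: one chooses a splitting $w_\F=d_\F k_\F$, lifts of the $\val(\F)-1$ already-positioned adjacent elevators to the $d_\F$-fold cover of $E$, and a position for the remaining adjacent elevator $e_0(\F)$ — the unmarked elevator of $\F$ pointing toward the unbounded end of its tree — which by Theorem \ref{theorem menelaus tropical bundle} is pinned modulo $w_{e_0(\F)}$-torsion. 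That this reconstruction is a bijection between tropical solutions over $(\Dfk,\mathfrak m)$ and the corresponding tuples of choices follows from genericity and stretchedness as in the planar case, and the number of choices for each floor does not depend on where the previously positioned elevators sit; hence the local sums factor as products over the floors.

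It remains to identify the per-floor factors. Every vertex of $\Gamma$ lies on a floor, and at a vertex of $\F$ where an elevator of weight $w_e$ attaches one has $m_V=w_ek_\F$, so $\prod_{V\in\F}m_V=k_\F^{\val(\F)}\prod_{e\ni\F}w_e$. Summing $\prod_{V\in\F}m_V$ over the realizations of $\F$ and converting the normalized count of Proposition \ref{proposition floor multiplicity} into this raw sum gives $w_{e_0(\F)}\bigl(\prod_{e\ni\F}w_e\bigr)m(\F)$ per floor. Multiplying over all floors, each bounded elevator contributes $w_e^2$ and each unbounded one $w_e$ through the products $\prod_{e\ni\F}w_e$, while each unmarked elevator contributes one extra $w_e$ since it equals $e_0(\F)$ for exactly one floor $\F$; sorting the four cases (marked or unmarked, bounded or unbounded) reproduces $\prod_{e\in\Dfk^\infty_{m}}w_e\prod_{e\in\Dfk^b_{m}}w_e^2\prod_{e\in\Dfk^\infty_{um}}w_e^2\prod_{e\in\Dfk^b_{um}}w_e^3$, and together with $\prod_\F m(\F)$ this is $m(\Dfk,\mathfrak m)$. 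The refined equality is proved by the same reconstruction, the only change being that the elevator weights no longer factor out of the per-floor sum; the refined analogue of Proposition \ref{proposition floor multiplicity} gives $w_{e_0(\F)}\bigl(\prod_{e\ni\F}[w_e]_q\bigr)m^q(\F)$ per floor, and the same collation produces $m^q(\Dfk,\mathfrak m)$ of Definition \ref{definition refined multiplicity floor diagram}. One could alternatively deduce the statement by iterating the Caporaso--Harris type formula of Theorem \ref{theorem caporaso harris formula}.

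The main obstacle is the bookkeeping in the last step: one must verify that the assignment from tropical solutions to (marked floor diagram, reconstruction data) is genuinely a bijection, so that no solution is over- or under-counted, and that every factor $w_e$ — whether it appears inside a vertex multiplicity $w_ek_\F$ or as the count of admissible positions of an unmarked elevator — is attributed exactly once. The extra position factor carried by unmarked \emph{bounded} elevators, responsible for the exponent $3$ rather than $2$ and absent from the planar floor-diagram calculus of \cite{brugalle2007enumeration}, is the delicate point: it comes from the Menelaus relation of Theorem \ref{theorem menelaus tropical bundle} applied consistently along the whole curve, not floor by floor. Finally one records that, by genericity, the tropical solutions are exactly the curves produced by the reconstruction, superabundant loops in the torsion case $\TT F_{0,\alpha}$ being handled by the conventions fixed in the previous sections.
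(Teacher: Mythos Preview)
Your proposal is correct and follows essentially the same route as the paper: choose a stretched configuration, use the floor decomposition already established, invoke Proposition~\ref{proposition floor multiplicity} to count the realizations of each floor, and then collate the edge factors $w_e$, $w_e^2$, $w_e^3$ according to whether an elevator is bounded/unbounded and marked/unmarked. The only minor slip is that the independence of the \emph{refined} counts from the choice of $\P$ is Theorem~\ref{theorem refined invariance}, not Theorem~\ref{theorem correspondence}; otherwise your bookkeeping matches the paper's.
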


\begin{proof}
We already proved the following: if the point configuration $\P$ is stretched, all the sought parametrized tropical curves passing through $\P$ admit a floor decomposition. Thus, we need to prove that the multiplicity of a marked floor diagram $(\Dfk,\mathfrak{m})$ matches the sum of the multiplicities of the tropical curves it encodes.

\medskip

To recover a tropical curve from a floor diagram, one just needs to recover the floors and the position of the unmarked elevators. This is done as follows. By assumption, the complement of marked elevators is a forest of trees that each contain a unique unbounded elevator. For a floor $\F$, assuming all the adjacent elevators are marked, we show below that this is indeed sufficient to impose a finite number of shape of the floor $\F$, and fix the position of the remaining elevator $e$ up to $w_e$-torsion. We then proceed by induction, pruning the trees of the forest.

\medskip

Let $\F$ be a floor all of whose adjacent elevators but one bear a marking. Let $(u_i)$ be the collection of weights of the adjacent elevators (with a sign according to whether the elevator goes up or down), $x_i$ their position in $\TT E$, with the first elevator being the unmarked one. The floor $\F$ is a circle with some adjacent elevators, and it might go around the cylinder several times. Thus, one has to consider all the possible splittings of $w_\F$ as a product $w_\F=d\cdot k$, where $d$ is the degree of the topological cover $\pi|_\F:\F\to \TT E$, and $k$ the common horizontal coordinate of the slopes of the edges on $\F$.
\begin{itemize}[label=-]
\item First, assume that $d=1$. We consider the chart given by the strip $[0;l]\times\RR$. Then the floor that we are looking for is the tropical graph (\textit{i.e.} the usual graph inside $\TT F$ plus vertical ends to make it balanced at corners of the graph) of some piecewise affine function on $[0;l]$ satisfying the gluing condition on the boundary of the strip leading to $\TT F$.

\begin{rem}
The problem amounts to find a meromorphic sections of a line bundle that have prescribed poles and zeros, and one non-fixed pole/zero $x_0$ which should have multiplicity $u_1$.
\end{rem}

The condition for the existence of such a piecewise affine function is that the divisor $\sum_1^N u_ix_i$ is equivalent to the divisor of the line bundle defining $\TT F$ inside $\mathrm{Pic}(\TT E)\simeq \ZZ\times \TT E$: the degree ensures that the change of slope between $0$ and $l$ is the right one, and the component in $\TT E$ ensures that points on both sides of the strip are glued together. The degree condition is already ensured by the divergence condition in the diagram. The remaining constraint determines uniquely $u_1x_1$:
$$u_1x_1+\sum_2^N u_k x_k=\left\{ \begin{array}{l}
w_\F\alpha \text{ if }\delta=0 \\
0 \text{ if }\delta\neq 0 \\
\end{array}\right. \in \TT E\simeq\RR/l\ZZ.$$
Thus, it gives $u_1$ possibilities for the position $x_1$ of the unmarked elevator, translating by the $u_1$-torsion elements. This accounts for one of the $u_1$ factor in the formula. (The other ones come from the vertex multiplicity.) For each possible choice of $x_1$, the choice of piecewise affine function is unique up to a constant shift because the difference of two such functions gives an affine function on $\TT E$, which is necessarily constant. The point condition on the floor fixes the constant.

\item If $d\neq 1$, we unfold the floor to the $d$-fold cover of $\TT E$. We thus take a chart given by the strip $[0;dl]\times\RR$. For each elevator, one has to choose a lift (no pun intended !), there are $d^{\val(\F)-1}$ possibilities for the lifts of the marked elevators. We now look for a piecewise affine function on $[0;dl]$. The same reasoning gives $u_1$ solutions for the position of $x_1$ inside $[0;dl]$. We then project the solutions back to $\TT E$ and get floors that make $d$ rounds around the cylinder. However, one has to divide by the $d$ deck transformations of the cover. However, as there are now $d$ possible shifts to make the floor pass through the fixed point, we have to multiply again by $d$, cancelling the previous division. Notice that the position of the unmarked elevator is fixed in the cover, we do not and cannot choose its lift under the covering map since the relation $\sum u_i x_i\equiv\ast$ has to be satisfied in the $d$-fold cover of $\TT E$.
\end{itemize}
The vertex multiplicity of every vertex on $\F$ is equal to $u_i k$, where $u_i$ is the weight of the adjacent elevator. Thus, we get the following contribution:
\begin{equation}\label{equation floor multiplicity}
u_1\sum_{dk=w_\F} d^{\val(\F)-1}\left(\prod_1^N ku_i\right)=u_1\left(\prod_1^N u_i\right)w_\F^{\val(\F)-1}\sigma_1(w_\F),
\end{equation}
where $\sigma_1(n)=\sum_{d|n}d$. We then make the product over all the floors to reconstruct all the tropical curves. Finally, we get:
\begin{itemize}[label=-]
\item the product of all the $m(\F)$,
\item a marked bounded edge $e$ is adjacent to two floors, thus we multiply by $w_e^2$.
\item an unmarked bounded edge is adjacent to two floors, hence a factor $w_e^2$ coming from the multiplicities of the adjacent vertices, and as its position is only determined up to $w_e$-torsion, an additional factor $w_e$, as present in equation (\ref{equation floor multiplicity}).
\item a marked unbounded end is adjacent to only one floor, hence a contribution $w_e$.
\item an unmarked unbounded end is adjacent to only one floor, and its position is fixed up to $w_e$-torsion, hence a contribution of $w_e^2$.
\end{itemize}
We thus get the announced multiplicity. In the refined case, equation (\ref{equation floor multiplicity}) is replaced by
$$u_1\sum_{kd=w_\F} d^{\val(\F)-1}\prod_i[ku_i]_q= u_1\prod_i [u_i]_q\sum_{dk=w_\F} d^{\val(\F)-1}\prod_i \frac{[ku_i]_q}{[u_i]_q}=u_1m^q(\F)\prod_i [u_i]_q .$$
Making the product of all contribution, we get the multiplicity from Definition \ref{definition refined multiplicity floor diagram}.
\end{proof}

\begin{rem}
The multiplicity of a floor $\F$ is derived from the solution to the following special instance of our enumerative problem: the genus is $1$, all but  one ends are fixed, and we have a unique additional point condition.
\end{rem}

As in \cite{arroyo2011recursive}, the floor diagram decomposition can be seen as the iterated application of some Caporaso-Harris type formula for the relative invariants $N^\bullet_{g,(d_1,d_2)}(\mu_0,\mu_\infty,\nu_0,\nu_\infty)$. In our case, the formulas for classical and refined invariants are as follows. Despite their massive expression, their proof is not that difficult and relies on the existence of floor diagrams. Their shape becomes clearer with the Fock space approach which is the content of section \ref{section fock}. If $\mu\geqslant\nu$ are two partitions, then $\bino{\mu}{\nu}$ is by definition $\prod_i \bino{\mu_i}{\nu_i}$. Recall that $I^\mu=\prod_i i^{\mu_i}$, and $I_q^\mu=\prod_i [i]_q^{\mu_i}$.

\begin{theo}\label{theorem caporaso harris formula}
We have the following recursive formulas:
\begin{align*}
 & N^{\delta,\bullet}_{g,(d_1,d_2)}(\mu_0,\mu_\infty,\nu_0,\nu_\infty) \\
= & \sum_{k:\nu_{0k}>0} k N^{\delta,\bullet}_{g,(d_1,d_2)}(\mu_0+e_k,\mu_\infty,\nu_0-e_k,\nu_\infty)  \\
 & + \sum \bino{\mu_0}{\mu'_0} \bino{\nu'_0}{\nu_0-e_k} I^{\nu'_0-\nu_0}s^{|\mu'_0-\mu_0|+|\nu'_0-\nu_0|}\sigma_1(s)k^2  N^{\delta,\bullet}_{g',(d_1-s,d_2)}(\mu'_0,\mu_\infty,\nu'_0,\nu_\infty) \\
 & + \sum \bino{\mu_0}{\mu'_0}\bino{\nu'_0}{\nu_0} I^{\nu'_0-\nu_0} s^{|\mu_0-\mu'_0|+|\nu'_0-\nu_0|}\sigma_1(s) k^2 N^{\delta,\bullet}_{g',(d_1-s,d_2)}(\mu'_0+e_k,\mu_\infty,\nu'_0,\nu_\infty), \\
\end{align*}
and
\begin{align*}
& BG^{\delta,\bullet}_{g,(d_1,d_2)}(\mu_0,\mu_\infty,\nu_0,\nu_\infty) \\
= & \sum_{k:\nu_{0k}>0} [k]_q BG^{\delta,\bullet}_{g,(d_1,d_2)}(\mu_0+e_k,\mu_\infty,\nu_0-e_k,\nu_\infty)  \\
 & + \sum  \bino{\mu_0}{\mu'_0}\bino{\nu'_0}{\nu_0-e_k} I_q^{\nu'_0-\nu_0}\left(\sum_{dl=s}d^{|\mu'_0-\mu_0|+|\nu'_0-\nu_0|}\frac{[lk]_q}{[k]_q}\right. \\
 & \left.\prod_i \left(\frac{[il]_q}{[i]_q}\right)^{\mu_{0i}-\mu'_{0i}+\nu'_{0i}-\nu_{0i}}\right)k[k]_q BG^{\delta,\bullet}_{g',(d_1-s,d_2)}(\mu'_0,\mu_\infty,\nu'_0,\nu_\infty) \\
 & + \sum \bino{\mu_0}{\mu'_0}\bino{\nu'_0}{\nu_0} I_q^{\nu'_0-\nu_0}\left(\sum_{dl=s}d^{|\mu'_0-\mu_0|+|\nu'_0-\nu_0|}\frac{[lk]_q}{[k]_q}\right. \\
 & \left.\prod_i \left(\frac{[il]_q}{[i]_q}\right)^{\mu_{0i}-\mu'_{0i}+\nu'_{0i}-\nu_{0i}}\right)k[k]_q BG^{\delta,\bullet}_{g',(d_1-s,d_2)}(\mu'_0+e_k,\mu_\infty,\nu'_0,\nu_\infty), \\
\end{align*}
where the second and third sums are over $g',s,k,\mu'_0,\nu'_0$ such that respectively
$$\begin{array}{lcl}
\text{second sum:} & \text{ and } & \text{third sum:} \\
\mu'_0\leqslant \mu_0 & & \mu'_0\leqslant \mu_0 \\
\nu'_0\geqslant \nu_0-e_k & & \nu'_0\geqslant \nu_0 \\
\|\nu'_0\|+\|\mu'_0\|=\delta(d_1-s)+d_2 & & \|\nu'_0\|+\|\mu'_0\|+k=\delta(d_1-s)+d_2 \\
g-g'=|\nu'_0-\nu_0| & & g-g'=|\nu'_0-\nu_0|+1 \\
\end{array}$$
\end{theo}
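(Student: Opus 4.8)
The plan is to derive both recursions directly from the floor diagram description of the previous theorem. Since $N^\bullet_{g,(d_1,d_2)}(\mu_0,\mu_\infty,\nu_0,\nu_\infty)$ and $BG^\bullet_{g,(d_1,d_2)}(\mu_0,\mu_\infty,\nu_0,\nu_\infty)$ are weighted counts of marked floor diagrams, it is enough to reorganize those sums by peeling off, from each diagram $(\Dfk,\mathfrak m)$, the part lying ``below'' the first marked point $\mathfrak m(1)$. Because each floor carries exactly one marked point and each component of the complement of the markings on the elevators contains a unique unbounded elevator, $\mathfrak m(1)$ lies either on an unbounded inward (i.e. $E_0$-)elevator sitting below every floor, or on the lowest floor $\F_0$ of $\Dfk$. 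This dichotomy produces the first summand on the one hand, and the last two summands on the other.

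In the first case, the elevator carrying $\mathfrak m(1)$ has some weight $k$ and is one of the free $E_0$-ends, so $\nu_{0k}>0$. Un-marking it and reclassifying it as a fixed end produces a marked floor diagram of the same bidegree and genus with tangency data $(\mu_0+e_k,\mu_\infty,\nu_0-e_k,\nu_\infty)$, and conversely any such diagram together with a choice of one of its fixed $k$-ends to free and mark returns to $(\Dfk,\mathfrak m)$. Comparing $m(\Dfk,\mathfrak m)$ (resp.\ $m^q(\Dfk,\mathfrak m)$) before and after, and accounting for the normalization by $I^{\mu_0}$ (resp.\ $I_q^{\mu_0}$) implicit in the relative count, the end contributes $w_e=k$ (resp.\ $[w_e]_q=[k]_q$) as a marked unbounded elevator but is neutral once fixed, which gives exactly the prefactor $k$ (resp.\ $[k]_q$) of the first term; summing over $k$ yields the first summand.

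In the second case, set $s=w_{\F_0}$; removing $\F_0$ decreases $\sum_\F w_\F$, hence $d_1$, by $s$, and drops the number of floors by one, while the upward elevators of $\F_0$ become new inward $E_0$-ends of the remaining (smaller) diagram. Let $e_0$, of weight $k$, be the distinguished upward elevator of $\F_0$ pointing toward the unbounded end of its component. The binomials $\binom{\mu_0}{\mu'_0}$ and $\binom{\nu'_0}{\nu_0}$ (resp.\ $\binom{\nu'_0}{\nu_0-e_k}$) count the choices of which fixed and which free $E_0$-ends of $\Dfk$ are consumed by $\F_0$, leaving the profiles $\mu'_0\le\mu_0$ and $\nu'_0$ for the smaller diagram, while $I^{\nu'_0-\nu_0}$ (resp.\ $I_q^{\nu'_0-\nu_0}$) is the weight of the newly created free ends. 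The third versus the second summand corresponds to whether $e_0$ becomes a fixed end of the smaller diagram (adding $e_k$ to $\mu'_0$ and, by the Euler-characteristic count for removing $\F_0$, dropping the genus by one more, which is the discrepancy $g-g'=|\nu'_0-\nu_0|+1$) or a free marked one ($g-g'=|\nu'_0-\nu_0|$). Finally, the internal multiplicity of the peeled floor, read off from Proposition \ref{proposition floor multiplicity}, is $s^{|\mu'_0-\mu_0|+|\nu'_0-\nu_0|}\sigma_1(s)$ (resp.\ the refined floor multiplicity $m^q(\F_0)$ of Definition \ref{definition refined multiplicity floor diagram}, which does not collapse to a divisor sum and is therefore reproduced verbatim), and the elevator $e_0$ contributes the remaining $k^2$ (resp.\ $k[k]_q$) through the elevator products of Definitions \ref{definition multiplicity floor diagram}--\ref{definition refined multiplicity floor diagram}. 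Summing over all marked floor diagrams and over these combinatorial choices produces the second and third summands.

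The content of the argument is combinatorial, and the main obstacle is the weight bookkeeping: one must check that under peeling the elevator contributions of Definitions \ref{definition multiplicity floor diagram} and \ref{definition refined multiplicity floor diagram} split correctly between the peeled piece and the remainder --- in particular the $w_e^2$ versus $w_e^3$ (resp.\ $[w_e]_q^2$ versus $w_e[w_e]_q^2$) behaviour of a bounded elevator that becomes unbounded, and the treatment of an unmarked bounded edge that behaves like a free end for one adjacent floor --- that the induced marking on the smaller diagram is well defined, and that $(\Dfk,\mathfrak m)\mapsto(\text{peeled piece},\text{remainder})$ is a bijection onto exactly the data satisfying the stated constraints on $g',s,k,\mu'_0,\nu'_0$. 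For the refined identity one additionally needs the non-simplification of $m^q(\F_0)$ noted after Proposition \ref{proposition floor multiplicity}, together with the factorization relating $\big(\prod_i[u_i]_q\big)\,m^q(\F_0)$ to the unnormalized divisor sum $\sum_{d d'=s}d^{\,\val(\F_0)-1}\prod_i[d'u_i]_q$ used in that proof.
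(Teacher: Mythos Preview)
Your approach is essentially the paper's: the paper phrases the recursion geometrically (move one marked point far down and record whether it lands on an elevator or on the bottom floor), while you phrase it combinatorially (peel off what lies below $\mathfrak m(1)$ in a floor diagram). These are the same decomposition viewed through the correspondence between stretched tropical curves and marked floor diagrams, and your bookkeeping outline (binomials, $I^{\nu'_0-\nu_0}$, floor multiplicity $s^{\,\cdot}\sigma_1(s)$, elevator factor $k^2$, and the refined analogue via Definition~\ref{definition refined multiplicity floor diagram}) matches what the paper invokes.

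There is, however, a concrete slip in your case split. You declare that the distinguished elevator $e_0$ of $\F_0$ is always \emph{upward}, and then distinguish the second and third sums by whether this upward $e_0$ becomes ``free marked'' or ``fixed''. That is not the correct dichotomy. The edge $e_0$ is by definition the unique edge of $\F_0$ pointing toward the unbounded end of its tree in $\Gamma\setminus h^{-1}(\P)$; for the lowest floor this edge goes \emph{down} precisely when $\F_0$ is adjacent to a free $E_0$-end, and goes \emph{up} only when every bottom elevator of $\F_0$ is fixed. The second sum is the first situation: $e_0$ is a downward unbounded free end of weight $k$ (hence the $\nu_0-e_k$ in the binomial and in the range $\nu'_0\geqslant\nu_0-e_k$), it is consumed when $\F_0$ is removed, and the remaining upward bounded elevators of $\F_0$ become the new free $E_0$-ends $\nu'_0-(\nu_0-e_k)$. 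The third sum is the second situation: all bottom elevators of $\F_0$ are fixed, $e_0$ is an upward unmarked bounded elevator, and upon peeling it becomes a \emph{fixed} end of the smaller diagram (hence the $\mu'_0+e_k$). Once you correct the direction of $e_0$ in the second-sum case, your genus count and the elevator-weight bookkeeping ($w_e^2$ vs.\ $w_e^3$, resp.\ $[w_e]_q^2$ vs.\ $w_e[w_e]_q^2$) go through as you indicate, and the argument is complete.
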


\begin{proof}
The proof is similar to the tropical proof of the Caporaso-Harris formula in \cite{gathmann2007caporaso}. The three sums emphasize the three possible events when taking one of the $|\nu_0|+|\nu_\infty|+g-1$ marked points and moving it far down:
\begin{itemize}[label=$\circ$]
\item The marked point goes on an elevator, and we then count the same curves but with a bottom elevator that is now fixed. The factor $k$ is present due to the fact that for relative invariant, tropical curves are counted with a multiplicity $\frac{1}{I^{\mu_0+\mu_\infty}}m_\Gamma$.
\item The marked points goes on a floor, of degree denoted by $s$, and exactly one unfixed bottom elevator arrives on the floor. We get terms in the second sum:
	\begin{itemize}[label=-]
	\item The $\mu_0$ fixed ends are split between those that do not arrive on the floor and those who do, respectively $\mu'_0$ and $\mu_0-\mu'_0$.
	\item One unfixed elevator of weight $k$ arrives on the floor.
	\item Including the unfixed ends leaving the floor, there are $\nu'_0\geqslant \nu_0-e_k$ unfixed ends. 
	\end{itemize}
The other terms are here to account for the different shapes of the floors, and the multiplicities of the vertices on it. The binomial coefficients are here to account for the symmetries between the different unbounded ends.
\item Last, the marked point still goes on a floor, but only fixed bottom elevators arrive on it. Then, one unmarked elevator needs to leave the floor, and the other are free. The position of the free elevators is determined by the other. We get the last sum with a similar bookkeeping.
\end{itemize}
\end{proof}


\begin{rem}
The recursive formula from Theorem \ref{theorem caporaso harris formula} along with some initialization for $d_1=1$ and $g=1$, or $g=0$ and $d_1=0$ allows one to compute all relative invariants.
\end{rem}

\begin{rem}
It is also possible to get a Caporaso-Harris like formula by taking a point far up. Concretely, it means starting the floor diagram by the top floor rather than by the first floor.
\end{rem}

\section{Examples and applications}

We now give several examples of computation of invariants $N^\delta_{g,(d_1,d_2)}(\mu_0,\mu_\infty,\nu_0,\nu_\infty)$. We also give a polynomiality result about some of their generating functions, and quasi-modularity for another.

\subsection{Concrete computations}
\label{section example computations}

\begin{expl}
We compute $N^\delta_{1,(d_1,d_2)}$ for any bidegree. As the genus is $1$, there is only one floor diagram that contributes: a floor diagram with more floors would be of genus at least $2$. However, if $d_2\geqslant 1$, there are two possible markings, since the non-marked end can be above or under the floor. Their multiplicities are both $d_1^{\delta d_1+2d_2-1}\sigma_1(d_1)$, giving
$$N^\delta_{1,(d_1,d_2)}=2d_1^{\delta d_1+2d_2-1}\sigma_1(d_1).$$
If $d_2=0$, there is no elevators above the floor, and one thus has
$$N^\delta_{1,(d_1,d_2)}=d_1^{\delta d_1-1}\sigma_1(d_1).$$
\end{expl}

\begin{figure}[h]
\begin{center}
\includegraphics[scale=0.5]{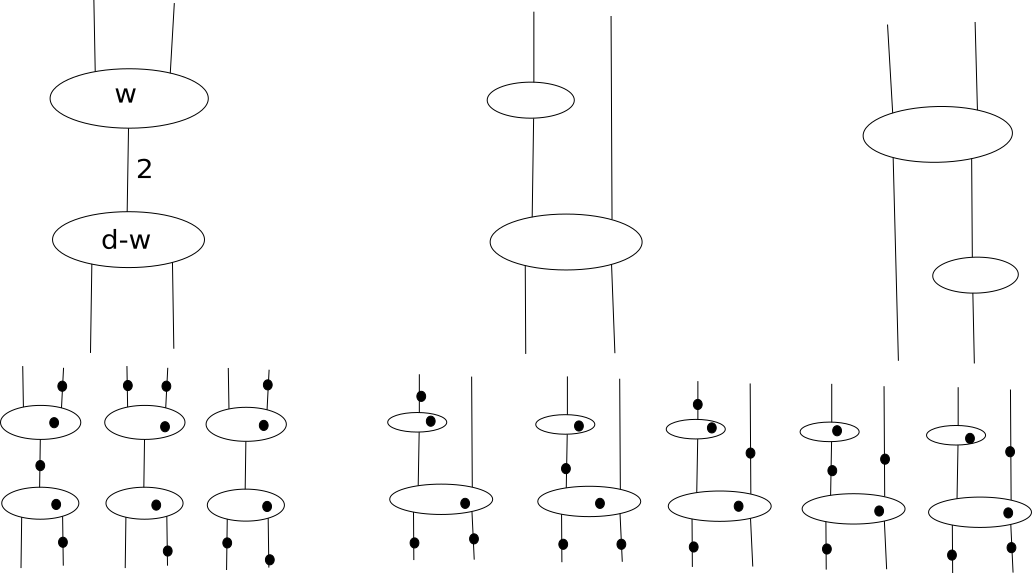}
\caption{\label{figure fd n2d2} Floor diagrams of genus $2$ and bidegree $(d,2)$ in $\TT F_0$ with their markings.}
\end{center}
\end{figure}

\begin{expl}
We take $\delta=0$, $g=2$ and compute $N^0_{2,(d,2)}$ for any $d$. The floor diagrams $\Dfk$ we are looking for are depicted on Figure \ref{figure fd n2d2}. There are three possible shapes. For each shape, one has to look for all the possible splittings of the degree $d$ among the two floors of the curve: $w$ and $d-w$. Finally, one has to account for the different labellings $\mathfrak{m}$ of the floor diagrams. There are $n=5$ points for marking $\Dfk$. As both floors are marked, exactly two elevators have no marked point. Taking into account the fact that each component of the complement of the marked elevators possesses a unique unbounded end, this allows one to make the list of all possibilities depicted on Figure \ref{figure fd n2d2}.
\begin{itemize}[label=$\circ$]
\item The first diagram possesses three possible markings. In exactly two of them, the unique bounded elevator of weight $2$ is unmarked. They give the following contribution:
$$w^2\sigma_1(w)(d-w)^2\sigma_1(d-w)\left( 2^2+2^3+2^3\right).$$
\item For the second diagram on Figure \ref{figure fd n2d2}, the possible images of the markings are drawn under it. There are five possibilities. Then, one has to count the increasing functions to the each of the image, as for usual floor diagrams in \cite{brugalle2007enumeration}. This yields the following contribution:
$$w\sigma_1(w)(d-w)^3\sigma_1(w)\left( 1+1+3+3+2 \right).$$
\item The last floor diagram is the symmetric of the second one and yields the same contribution.
\end{itemize}
Finally, we get that
$$N^0_{2,(d,2)}=\sum_{w=1}^{d-1} 20 w^2\sigma_1(w)(d-w)^2\sigma_1(d-w) + 20w\sigma_1(w)(d-w)^3\sigma_1(w).$$
\end{expl}

\begin{figure}[h]
\begin{center}
\includegraphics[scale=0.5]{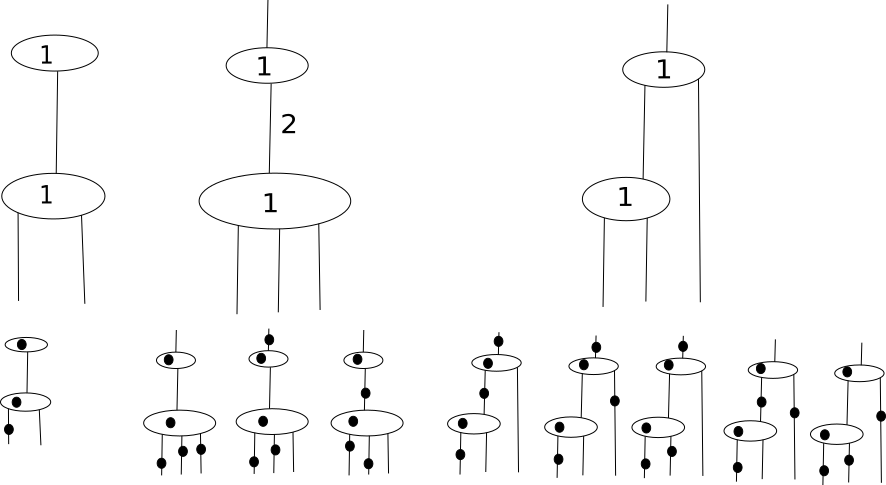}
\caption{\label{figure fd 1N220N221} Floor diagrams of genus $2$ and bidegree $(2,0)$ and $(2,1)$ in $\TT F_1$ with their markings.}
\end{center}
\end{figure}

\begin{expl}
We now assume $\delta=1$, $g=2$ and compute $N^1_{2,(2,0)}$ and $N^1_{2,(2,1)}$. For $N^1_{2,(2,0)}$, there is just one floor diagram of genus $2$, on the left of Figure \ref{figure fd 1N220N221}, and it possesses exactly one marking, depicted under it. It yields
$$N^1_{2,(2,0)}=1.$$
For $N^1_{2,(2,1)}$, there are two floor diagrams of genus $2$ and bidegree $(2,1)$. They are drawn on the right of Figure \ref{figure fd 1N220N221}. There are $n=5$ points to mark the diagram. Thus, exactly two elevators are unmarked.
\begin{itemize}[label=$\circ$]
\item The first diagram (middle of Figure \ref{figure fd 1N220N221}) has three possibles markings. The edge of weight $2$ is unmarked for the first two, and marked for the last. They give the following contribution:
$$2^3+2^3+2^2.$$
\item Similarly to the previous example, we depict the possible markings of the second floor diagram (right of Figure \ref{figure fd 1N220N221}) according to their image. When counting them with the number of increasing functions to the fixed image, they yield the following contribution:
$$1+3+1+4+4.$$
\end{itemize}
We thus get
$$N^1_{2,(2,1)}=33.$$
\end{expl}

\begin{figure}[h]
\begin{center}
\includegraphics[scale=0.5]{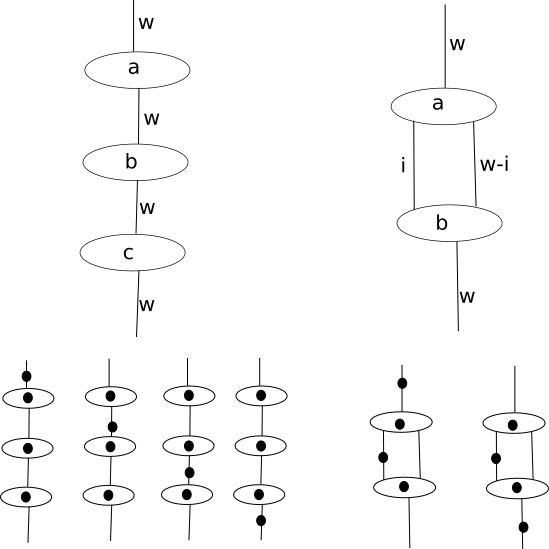}
\caption{\label{figure fd genus 3} Floor diagrams used for the computation of $N^0_{3,(d,w)}(0,0,w^1,w^1)$.}
\end{center}
\end{figure}

\begin{expl}
Back with $\delta=0$, we now take $g=3$ and compute the relative invariant
$$N_{3,(3,w)}(0,0,w^1,w^1).$$
There are now two floor diagrams of genus $3$. One has three floors, and one has two, and they are depicted on Figure \ref{figure fd genus 3}.
\begin{itemize}[label=$\circ$]
\item The first one possesses four different markings, which are each of multiplicity $w^{9}$. In this case, we have on Figure \ref{figure fd genus 3} $a=b=c=1$. If the bidegree was $(d,w)$ rather than $(3,w)$, one would have to multiply by the floors multiplicities $abc\sigma_1(a)\sigma_1(b)\sigma_1(c)$ for all the splittings $a+b+c=d$.
\item The second floor diagram possesses two possible markings, and additionally to the splitting of $3=a+b$ among the two floors, one has to account for the splitting of $w$ among the two bounded edges between the two floors. They yield the following contribution:
$$12w^3\sum_{i=1}^{w-1} i^2(w-i)^3  ,$$
as the floors contribute $2^2 1^2\sigma_1(2)\sigma_1(1)+2^2 1^2\sigma_1(2)\sigma_1(1)=12$. If the bidegree was rather $(d,w)$, one would make the sum over the splittings $d=a+b$, with floors multiplicities $a^2b^2\sigma_1(a)\sigma_1(b)$.
\end{itemize}
In total, we get
$$N^0_{3,(3,w)}(0,0,w^1,w^1)=4w^9+12w^3\sum_{i=1}^{w-1} i^2(w-i)^3.$$
Notice that as for any polynomial $P$, $\sum_{k=1}^nP(k)$ is still a polynomial in $n$, the result is in fact a polynomial in $w$. This fact is generalized in the next subsection.
\end{expl}

\begin{rem}
Notice that in each case, the number of floors is bounded by the genus, since each floor contributes $1$ to the genus, and by the first coordinate of the bidegree, since each floor is of degree at least $1$.
\end{rem}

\subsection{Polynomiality of relative invariants}

We now consider partitions $\nu_0$ and $\nu_\infty$ of fixed length $l_0$ and $l_\infty$, but of arbitrary size, provided that $\|\nu_0\|-\|\nu_\infty\|=\delta d_1$ for some fixed $d_1$. Let $d_2=\|\nu_\infty\|$. We look for curves of genus $g$ of bidegree $(d_1,d_2)$ and tangency profile $(\nu_0,\nu_\infty)$ passing through $l_0+l_\infty+g-1$ points. For the following theorem, we consider a partition $\nu$ of fixed length $l$ as a tuple $(\nu_1,\dots,\nu_l)$.

\begin{theo}\label{theorem polynomiality}
For any fixed $g$ and $d_1$, the function $(\nu_0,\nu_\infty)\mapsto N^\delta_{g,(d_1,d_2)}(0,0,\nu_0,\nu_\infty)$ is a piecewise polynomial function in the variables $\nu_{0,1},\dots,\nu_{0,l_0},\nu_{\infty, 1},\dots,\nu_{\infty, l_\infty-1}$.
\end{theo}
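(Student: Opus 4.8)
The plan is to run the floor-diagram algorithm of the previous section and track how the multiplicity $m(\Dfk,\mathfrak{m})$ depends on the partition data. First I would fix $g$ and $d_1$. As noted in the remark after the examples, the number of floors of any contributing floor diagram is bounded above by $g$ (each floor contributes at least $1$ to the genus) and by $d_1$ (each floor has weight at least $1$). Hence the \emph{combinatorial skeleton} — the underlying oriented graph of floors and elevators, together with the distribution of floor weights $(w_\F)$ and the assignment of the fixed unbounded elevators to the values $\nu_{0,i}$ and $\nu_{\infty,j}$ — ranges over a \emph{finite} set, independent of the size of $\nu_0,\nu_\infty$. Moreover the genus-$g$ condition forces the graph to be trees-plus-$(g-\#\text{floors})$ extra edges, so the bounded-elevator structure is likewise finitely constrained. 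The markings $\mathfrak{m}$ of a fixed skeleton also form a finite set. So $N^\mathrm{trop}_{g,(d_1,d_2)}(0,0,\nu_0,\nu_\infty)$ is a finite sum over skeletons and markings, and it suffices to show each term is piecewise polynomial in $\nu_{0,1},\dots,\nu_{0,l_0},\nu_{\infty,1},\dots,\nu_{\infty,l_\infty-1}$ (with $\nu_{\infty,l_\infty}$ determined by $\|\nu_0\|-\|\nu_\infty\|=\delta d_1$, hence itself an affine-linear function of the free variables, which keeps us inside the polynomial world).

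Next I would examine the multiplicity formula from Definition \ref{definition multiplicity floor diagram}. For a fixed skeleton and marking, the multiplicity is a product over floors of $m(\F)=w_\F^{\val(\F)-1}\sigma_1(w_\F)$ times monomials $w_e^{a_e}$ in the elevator weights. The weights $w_e$ of the \emph{bounded} elevators and the weights $w_\F$ of the floors are not free: they are determined by the divergence conditions (requirement (f): $\div\F=\delta w_\F$) together with the prescribed values of the fixed unbounded elevators, which are precisely the parts $\nu_{0,i}$ and $\nu_{\infty,j}$. Since the skeleton is a fixed graph, solving these linear balancing relations expresses each $w_e$ and each $\delta w_\F$ as an \emph{affine-linear} function of the variables $\nu_{0,i},\nu_{\infty,j}$ with rational (in fact integer) coefficients; when $\delta\neq 0$ one divides by $\delta$, still affine-linear. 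Substituting these linear forms into the monomials $w_e^{a_e}$ gives polynomials in the $\nu$-variables. The only non-polynomial ingredient is $\sigma_1(w_\F)$. But on the region of parameter space where the $w_\F$ stay positive integers and in a fixed residue/divisibility configuration, $\sigma_1$ of a linear form is still a finite sum of linear forms; more honestly, the divisor-sum $\sigma_1(w_\F)=\sum_{d\mid w_\F}d$ is where genuine piecewise behavior enters, and I would handle it by noting that within each chamber (cut out by the finitely many linear equalities and inequalities governing which elevator slopes/weights are positive, which floor splittings occur, which inequalities among the $\nu_{0,i}$ determine the marking's validity) the relevant quantity is a polynomial. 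This is the same mechanism as in the $\sigma_1(d)$-appearance in double Hurwitz numbers treated tropically in \cite{cavalieri2010tropical}.

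The main obstacle, then, is the honest treatment of the $\sigma_1$ factors and of the chamber structure. Concretely: $\sigma_1$ is not polynomial, but the floor multiplicity $m(\F)$ actually arises (Proposition \ref{proposition floor multiplicity}) as $\sum_{d k = w_\F} d^{\val(\F)-1}k^{\val(\F)}$ — a sum over \emph{divisor pairs} of the floor weight. I expect the cleanest argument is: for a fixed skeleton and marking, after choosing, for each floor $\F$, a splitting $w_\F = d_\F k_\F$ (equivalently, choosing the number of turns $d_\F$ the floor makes), \emph{and} fixing how the prescribed parts $\nu_{0,i}$, $\nu_{\infty,j}$ distribute among the elevators, the contribution becomes a genuine monomial in the $d_\F$, the $k_\F$, and the $\nu$-variables, and the set of valid $(d_\F)$ is finite since $d_\F\le w_\F$ is bounded once we are in a chamber where $w_\F$ lies in a bounded-slope regime — wait, that is false in general since $w_\F$ grows with $\|\nu\|$. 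So instead I would argue that summing a monomial $d^{a}k^{b}$ over all factorizations $dk=w$ with $d$ ranging over all divisors of a \emph{linear form} $w=w(\nu)$ yields, on each chamber, a quasi-polynomial, and then invoke the standard fact (used already in the genus-$3$ example: "$\sum_{k=1}^n P(k)$ is a polynomial in $n$") that such divisor-sums, summed against the polynomial constraints coming from the balancing relations, assemble into piecewise polynomials. Making this last assembly precise — identifying the walls of the chambers with the linear inequalities that govern validity of markings and positivity of weights, and checking that across each wall the pieces are genuinely polynomial — is the technical heart; everything else is bookkeeping with the finitely many floor diagrams permitted by the bounds $\#\text{floors}\le\min(g,d_1)$.
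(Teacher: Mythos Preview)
There is a genuine gap, and it stems from a misreading of how the floor weights $w_\F$ enter the problem.

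The floor weights are \emph{part of the combinatorial data} of a floor diagram (requirement (c) in the definition), and they are constrained by $\sum_\F w_\F = d_1$, which is \emph{fixed}. Hence each $w_\F$ is a positive integer bounded by $d_1$, there are only finitely many ways to distribute the weights among the floors, and $\sigma_1(w_\F)$ is simply a \emph{constant} once the diagram is chosen. The ``main obstacle'' you identify --- the non-polynomiality of $\sigma_1$ --- does not arise at all. Your claim that ``$w_\F$ grows with $\|\nu\|$'' is false: only the \emph{elevator} weights grow with $\nu$; the floor weights do not. Likewise, the floor weights are not determined by the divergence relation $\div\F=\delta w_\F$; that relation constrains the elevator weights \emph{given} the floor weights, not the other way around.

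What actually requires attention, and what your plan misses, is this: once the floor weights are fixed, the divergence conditions together with the prescribed unbounded weights $\nu_{0,i},\nu_{\infty,j}$ determine the bounded elevator weights only up to one free integer parameter per independent cycle in the underlying graph (there are $g-\#\text{floors}$ of these). The contribution of a single marked diagram is therefore not a single monomial in the $\nu$-variables but a \emph{sum}, over these free cycle parameters, of a polynomial in the parameters and the $\nu$'s. The range of each free parameter is cut out by the positivity constraints on all elevator weights, hence by linear inequalities in the $\nu$'s. The lemma ``$\sum_{k=0}^n P(k)$ is a polynomial in $n$'' is the right tool, but it applies to \emph{this} summation over cycle parameters --- exactly as in the genus-$3$ example, where $\sum_{i=1}^{w-1} i^2(w-i)^3$ is a sum over the weight of one edge in a two-edge cycle --- and not to any divisor sum. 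Once you reorient the argument this way, the piecewise polynomiality follows, with the walls coming from the positivity inequalities and from possible coincidences among elevator weights (automorphisms of the diagram).
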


\begin{rem}
The last variable $\nu_{\infty, l_\infty}$ does not appear since the variables are constrained by the relation $\|\nu_0\|-\|\nu_\infty\|=\delta d_1$.
\end{rem}

\begin{proof}
As the genus $g$, number of marked points $l_0+l_\infty+g-1$ and total weight of the floors $d_1$ are fixed, up to the choice of the weight of the edges, there are a finite number of floor diagrams. Thus, we only need to show that spreading the weight among the edges leads to a polynomial for each marked floor diagram $(\Dfk,\mathfrak{m})$.

Let $\Dfk_1$ be a minimal subset of the set of bounded edges of a marked floor diagram $(\Dfk,\mathfrak{m})$ such that the complement is simply connected. Thus, if one fixes the weight $w_e$ of an edge $e\in\Dfk_1$, as the weights of the unbounded ends are prescribed by $\nu_0$ and $\nu_\infty$, there is a unique way to complete the weights of the other edges with numbers (eventually negative) so that the condition $\div\F=\delta w_\F$ is satisfied.

For each choice of weights $w_e$ on the marked edges, the multiplicity given by the formula of Definition \ref{definition multiplicity floor diagram} is a polynomial in the variables. Then, we need to make a sum over all the possible values of the weights $w_e$ of the marked edges. As for any polynomial $P$ the function $\sum_0^n P(k)$ is still a polynomial in $n$, the result follows by induction.

Finally, the fact that the weights need to be positive integers impose some linear conditions on the variables, and if some weights coincide, we may have to divide by the automorphism group of the floor diagram. Thus, each floor diagram does not always contribute solutions, and we only get a piecewise polynomial function.
\end{proof}

\begin{rem}
It is straightforward to extend the result to the case where some of the ends of the tropical curves are fixed but we restrict to the case where they are not to keep relatively easy notations.
\end{rem}

\subsection{Quasi-modularity}

We finish this section by proving the quasi-modularity of the following generating series: let $g$, $d_2$ be fixed integers and $\mu_0$, $\mu_\infty$, $\nu_0$, $\nu_\infty$ be fixed partitions with $\|\mu_0+\nu_0\|=\|\mu_\infty+\nu_\infty\|=d_2$, we consider the generating series for invariants in $\TT F_{0,\alpha}$:
$$F_{g,d_2}(\mu_0,\mu_\infty,\nu_0,\nu_\infty)=\sum_1^\infty N_{g,(d_1,d_2)}^{0}(\mu_0,\mu_\infty,\nu_0,\nu_\infty)y^{d_1}.$$
Quasi-modular functions are some generating functions that generalize modular functions, which are holomorphic functions on the Poincar\'e half-plane that have a nice behavior under the action of the modular group $PSL_2(\ZZ)$ acting by homography. We use the following characterization \cite{kaneko1995generalized}: a function is quasi-modular if and only if it expresses as a polynomial in the Eisenstein series $G_2,G_4,G_6$, where up to a linear transformation, $G_{2k}(y)=\sum_{n=0}^\infty \left(\sum_{d|n}d^k\right) y^n$. We also use the fact that quasi-modular functions are stable by the differential operator $D=y\frac{\mathrm{d}}{\mathrm{d}y}$.

\begin{theo}\label{theorem quasimodularity}
The functions $F_{g,d_2}(\mu_0,\mu_\infty,\nu_0,\nu_\infty)(y)$ are quasi-modular forms.
\end{theo}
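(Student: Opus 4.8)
The plan is to reduce the quasi-modularity of $F_{g,d_2}(\mu_0,\mu_\infty,\nu_0,\nu_\infty)$ to a finite sum, over marked floor diagrams, of generating series built from the elevator weights, and then recognize each such series as a polynomial in Eisenstein series via the operator $D = y\,\frac{\mathrm{d}}{\mathrm{d}y}$. First I would fix $g$, $d_2$, and the four partitions, and observe that the number of \emph{combinatorial types} of marked floor diagrams of genus $g$, with $\sum_\F w_\F = d_1$ varying but the genus and the tangency data bounded, is finite once we forget the actual edge weights: the number of floors is at most $g$ (each floor contributes $1$ to the genus) and at most $d_1$, and the number of elevators is controlled by $g$ and the lengths of the partitions. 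So $F_{g,d_2} = \sum_{(\Dfk,\mathfrak{m})} F_{(\Dfk,\mathfrak{m})}$, where the sum is over the finitely many \emph{weightless} marked diagram shapes and $F_{(\Dfk,\mathfrak{m})}(y) = \sum_{d_1} \big(\sum m(\Dfk,\mathfrak{m})\big) y^{d_1}$, the inner sum being over all ways to assign positive integer weights to the bounded edges and floors compatibly with the divergence conditions $\operatorname{div}\F = \delta w_\F = 0$ (recall $\delta = 0$ here, since we are in $\TT F_{0,\alpha}$) and with the prescribed weights on the ends dictated by $\nu_0,\nu_\infty$ (and $\mu_0,\mu_\infty$).

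Next I would analyze one fixed shape $(\Dfk,\mathfrak{m})$. Since $\delta = 0$, the divergence condition at each floor says the signed weights of adjacent elevators sum to $0$; choosing a spanning tree $\Dfk_1^c$ of the bounded-edge graph as in the proof of Theorem~\ref{theorem polynomiality}, the weights of the non-tree bounded edges are free parameters $e_1,\dots,e_r$ (here $r$ is the first Betti number of the floor graph, so $r \le g$), and all remaining edge weights are integer affine functions of the $e_j$ and of the fixed end-weights. The floor multiplicity is $m(\F) = w_\F^{\operatorname{val}(\F)-1}\sigma_1(w_\F)$, and each $w_\F$ is likewise an integer affine function of the $e_j$. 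The elevator contributions $w_e^{1,2,3}$ are monomials in affine functions of the $e_j$. Hence, letting $d_1 = \sum_\F w_\F$, which is itself an affine function $d_1 = c_0 + \sum c_j e_j$ of the $e_j$ with $c_j \ge 1$, we get
\begin{equation*}
F_{(\Dfk,\mathfrak{m})}(y) = \sum_{e_1,\dots,e_r \ge 1} P(e_1,\dots,e_r)\,\prod_\F \sigma_1\big(L_\F(e)\big)\; y^{\,c_0 + \sum_j c_j e_j},
\end{equation*}
where $P$ is a polynomial and $L_\F$ are affine forms in $e$ with positive integer coefficients. (Positivity constraints $e_j \ge 1$ and $L_\F(e)\ge 1$ make this a finite-complexity piecewise description, but each piece is a sum of the same analytic type.)

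The key analytic input is then: for a single variable, $\sum_{n\ge 1} n^k \sigma_1(n)\, y^n = \big(D^k G_2\big)(y)$ up to an additive constant and normalization, and more generally any series $\sum_{n\ge1} Q(n)\sigma_1(an+b)\,y^n$ with $Q$ polynomial is quasi-modular, being obtained from $G_2(y^a)$-type series by applying $D$ and multiplying by monomials; one also uses that $G_2(y^a)$ is quasi-modular (it is a modular-type object for a congruence subgroup, but what we actually need is only the Kaneko--Zagier characterization as a polynomial in $G_2,G_4,G_6$ after the substitution, together with closure under $D$). I would first treat the genus-reduced case $r=0$ (no free edges): then $d_1$ is fixed, the series is a single monomial in $y$, trivially quasi-modular — this is only the base case. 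For $r\ge 1$, I would argue by iterating: expand $\prod_\F \sigma_1(L_\F(e))$ using $\sigma_1(m)=\sum_{d\mid m} d$, interchange sums, and peel off one variable $e_j$ at a time, at each stage recognizing $\sum_{e_j} (\text{polynomial in }e_j)\cdot(\text{divisor-sum factors linear in }e_j)\cdot y^{c_j e_j}$ as an application of $D$ to a product of Eisenstein series evaluated at powers of $y$, times the remaining (lower-rank) generating series. Because quasi-modular forms are a ring closed under $D$, the whole expression lands in the ring generated by $G_2,G_4,G_6$. Summing over the finitely many shapes $(\Dfk,\mathfrak{m})$ and over the finitely many chambers of the piecewise description preserves quasi-modularity, giving the theorem.

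The main obstacle I anticipate is the bookkeeping in the induction on the number $r$ of free edge-parameters when several floors share an edge, so that the affine forms $L_\F(e)$ are genuinely multivariate and not independent: one must show the nested divisor sums $\sum_{d_1\mid L_{\F_1}(e)}\cdots$ can still be organized, after reindexing the divisibility variables, into products of one-variable Eisenstein-type series in suitable monomials $y^{c_j e_j}$, so that the $e_j$-summations decouple stage by stage. This is exactly the combinatorial heart of the argument, and it is essentially the statement that the relevant generating function of a \emph{tropical cover} of the elliptic curve is quasi-modular — the analogue of the classical result of Dijkgraaf and of B\"ohm--Bringmann--Buchholz--Markwig on tropical covers of $E$ — so I would either invoke that body of results directly for the per-diagram series or reproduce the short induction using only closure of quasi-modular forms under $D$ and under the substitution $y\mapsto y^a$.
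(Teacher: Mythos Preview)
Your plan contains a genuine structural error in how you identify the free parameters. You correctly note that, since $\delta=0$, the divergence condition at each floor reads $\sum_{e\ni\F}\pm w_e=0$ and involves only the elevator weights. But then you assert that ``each $w_\F$ is likewise an integer affine function of the $e_j$'' and that $d_1=\sum_\F w_\F$ is affine in the $e_j$. This is false: the floor weights $w_\F$ do not appear in any of the constraints when $\delta=0$, so they are completely free positive integers, independent of the elevator weights and of each other. Conversely, the elevator weights are \emph{not} free parameters to sum over: because all $w_e>0$, the diagram has no oriented cycles, and the unbounded elevator weights are fixed by $\mu_0,\mu_\infty,\nu_0,\nu_\infty$ with total $d_2$, every bounded elevator weight is bounded by $d_2$. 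Hence, once $g$, $d_2$ and the tangency profile are fixed, there are only finitely many marked floor diagrams with fully specified elevator weights; the only thing left to vary with $d_1$ is the tuple $(a_1,\dots,a_p)$ of floor weights, subject to $\sum a_i=d_1$ and nothing else.

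With the correct free parameters the argument collapses to one line and none of your anticipated obstacles arise. For a fixed diagram with elevator contribution $W$ (a constant) and floors $\F_1,\dots,\F_p$ of valencies $v_1,\dots,v_p$, the generating series is
\[
W\sum_{a_1,\dots,a_p\geq 1}\prod_{i=1}^p a_i^{v_i-1}\sigma_1(a_i)\,y^{\sum a_i}
= W\prod_{i=1}^p\Big(\sum_{a\geq 1}a^{v_i-1}\sigma_1(a)y^a\Big)
= W\prod_{i=1}^p D^{v_i-1}G_2(y),
\]
which is quasi-modular since the ring of quasi-modular forms is closed under $D$. In particular there is no need for substitutions $y\mapsto y^a$, no multivariate $\sigma_1(L_\F(e))$, and no appeal to tropical-cover quasi-modularity results. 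Your worry that $G_2(y^a)$ is only quasi-modular for a congruence subgroup (and is \emph{not} a polynomial in $G_2,G_4,G_6$) is well-founded and would have been a real obstruction in your setup; it simply does not appear once the floor weights are recognized as the independent summation variables.
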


\begin{rem}
Theorem \ref{theorem quasimodularity} extends the result from \cite{bohm2020counts} for generating series of relative invariants.
\end{rem}

\begin{proof}
As $d_2$, $g$ and the ramification profile are fixed, up to the choice of the weights of the floors, there are a finite number of floor diagrams. Thus, to show that the generating series are quasi-modular, we only need to show that the generating series for the multiplicities of each floor diagram is a quasi-modular form. For a floor diagram $\Dfk$, the generating series for the various weights that we can put on the floors is as follows:
$$W\sum_{a_1,\cdots,a_p} \left(\prod_1^p a_i^{\val\F_i-1}\sigma_1(a_i)\right)y^{\sum a_i},$$
where $W=\prod_{e\in\Dfk^\infty_{m}}w_e \prod_{e\in\Dfk^b_{m}}w_e^2\prod_{e\in\Dfk^\infty_{um}}w_e^2\prod_{e\in\Dfk^b_{um}}w_e^3$ is the contribution of the elevators, and the floors of $\Dfk$ are labeled $\F_1,\dots,\F_p$, having respective weights $a_1,\cdots,a_p$, so that $\sum a_i=d_1$. This factors as
$$W\prod_{i=1}^p \left( \sum_1^\infty a^{\val\F_i-1}\sigma_1(a)y^a\right).$$
Each sum under the product can be recognized as $D^{\val\F_i-1}G_2(y)$, where $G_2(y)=\sum_1^\infty \sigma_1(n)y^n$ and $D=y\frac{\mathrm{d}}{\mathrm{d}y}$. Hence, the generating series for each floor diagram is a quasi-modular form. Thus, so are the generating series for invariants.
\end{proof}

Quasi-modularity is a desirable property because it allows for a control of the asymptotics of the coefficients. Moreover, as they express as a polynomial in the Eisensetin series, it means that the knowledge of the degree of this polynomial and a finite number of values is sufficient to recover all values of the invariants.

\section{The Fock space approach}
\label{section fock}

The goal of this section is to use the floor diagram approach to relate the invariants considered in this paper to some coefficients of operators on a Fock space. This approach was pioneered by Y. Cooper and R. Pandharipande \cite{cooper2017fock} to express Severi degrees of $\PP^1\times\PP^1$ and some double Hurwitz number. The approach was later generalized by F. Block and L. G\"ottsche \cite{block2016fock} to both fit the setting of their refined invariants, and to apply for $h$-transverse polygons, which are basically the polygons for which the floor diagram approach works for the Severi degrees. In \cite{cavalieri2021counting}, the authors use floor diagrams to express descendant invariants of Hirzebruch surfaces in terms of operators on a Fock space, and one point relative descendant invariants.

\medskip

We describe two settings: one for the classical invariants $N^{\delta,\bullet}_{g,(d_1,d_2)}$, and one for the refined invariants $BG^{\delta,\bullet}_{g,(d_1,d_2)}$.

\medskip

We consider the Heisenberg algebra $\H$, modeled on the hyperbolic lattice, generated over $\QQ$ by $a_n,b_n$ for $n\in\ZZ$ and the following relations:
$$[a_n,a_m]=[b_n,b_m]=0 \text{ and }[a_n,b_m]=n\delta_{n+m,0}.$$
Its deformation $\H^q$ is the $\QQ[q^{\pm 1/2}]$-algebra generated by the same generators, but with the relations
$$[a_n,a_m]=[b_n,b_m]=0 \text{ and }[a_n,b_m]=[n]_q\delta_{n+m,0}.$$
The operators $a_{-n},b_{-n}$ are called \textit{creation operators} and $a_n,b_n$ are called \textit{annihilation operators}. By definition, $a_0=b_0=0$. For a partition $\mu$, let
$$a_\mu=\prod_i \frac{(a_i)^{\mu_i}}{\mu_i!},  a_{-\mu}=\prod_i \frac{(a_{-i})^{\mu_i}}{\mu_i!}, b_\mu=\prod_i \frac{(b_i)^{\mu_i}}{\mu_i!}, b_{-\mu}=\prod_i \frac{(b_{-i})^{\mu_i}}{\mu_i!}\in\H (\text{resp. }\H^q).$$
The \textit{Fock space} $\F$ (resp. $\F^q$) is the free $\QQ$-vector space (resp. $\QQ[q^{\pm 1/2}]$-vector space) generated by the creation operators acting on the vacuum $v_\emptyset$. In other words, by definition for any $n>0$ one has $a_nv_\emptyset=b_nv_\emptyset=0$, and $\F$ has a basis indexed by double partitions: let $|\mu,\nu\rangle =a_{-\mu}b_{-\nu}v_\emptyset$. The vectors $|\mu,\nu\rangle$ form a basis of $\F$. The condition $\langle v_\emptyset\mid v_\emptyset\rangle=1$, and $a_n$ (resp. $b_n$) being adjoint to $a_{-n}$ (resp. $b_{-n}$) defines a scalar product for which
$$\langle \mu,\nu \mid \mu',\nu'\rangle = \left\{ \begin{array}{l}
 \frac{I^\mu}{\mu!}\frac{I^\nu}{\nu!}\delta_{\nu,\mu'} \text{ in }\F, \\
  \frac{I_q^\mu}{\mu!}\frac{I_q^\nu}{\nu!}\delta_{\nu,\mu'} \text{ in }\F^q.\\
  \end{array} \right.$$
The Fock space $\F$ (resp. $\F^q$) is graded in the following way: $\bigoplus_n \F_n$, where $\F_n$ is generated by $|\mu,\nu\rangle$ for $\|\mu\|+\|\nu\|=n$. Then $\H$ (resp. $\H^q$) becomes a graded algebra: $\H=\bigoplus_{n\in\ZZ}\H_n$, and elements $a_{-n},b_{-n}$ belong to $\H_n$.

\medskip

We now introduce a family of operators on the Fock space. Let $\delta\in\NN$, we set:
\begin{align*}
H_\delta(t) = & \sum_{k>0}b_{-k}b_{k} + \sum_{s,\mu,\nu,k} t^s  s^{|\mu|+|\nu|}\sigma_1(s) k^2 a_{-\mu}a_\nu b_k \\
& + \sum_{s,\mu,\nu,k} t^s s^{|\mu|+|\nu|}\sigma_1(s) k^2 b_{-k}a_{-\mu}a_\nu , \\
\end{align*}
and its deformation
\begin{align*}
H^q_\delta(t) = & \sum_{k>0}b_{-k}b_{k} + \sum_{s,\mu,\nu,k} t^s  \left[\sum_{dl=s} d^{|\mu|+|\nu|}\prod \left(\frac{[li]_q}{[i]_q}\right)^{\mu_i+\nu_i}\right] k[k]_q a_{-\mu}a_\nu b_k \\
& + \sum_{s,\mu,\nu,k} t^s \left[\sum_{dl=s} d^{|\mu|+|\nu|}\prod \left(\frac{[li]_q}{[i]_q}\right)^{\mu_i+\nu_i}\right] k[k]_q b_{-k}a_{-\mu}a_\nu , \\
\end{align*}
where in each case, the first sum over the tuples $(s,\mu,\nu,k)$ is with $s,k>0$ and such that
$$\|\mu\|+s\delta=\|\nu\|+k,$$
while the second is over those such that
$$\|\mu\|+s\delta+k=\|\nu\|.$$
The various monomials in the function $H_\delta$ correspond to the various vertices that can occur in a floor diagram:
\begin{itemize}[label=-]
\item The term $b_{-k}b_k$ corresponds to a marked point on an elevator of weight $k$.
\item The term $a_{-\mu}a_\nu b_k$ corresponds to a floor with $|\mu|$ elevators leaving the floor, $|\nu|$ arriving on it, and a remaining elevator of weight $k$. In the setting of the floor diagrams, it is the unique elevator that goes in the direction of the unique unbounded end of the complement of marked points on elevators. As its position is fixed by the constraints, it behaves like a fixed elevator for the next floor that it meets.
\item Similarly for the term $b_{-k}a_{-\mu}a_\nu$ but with the specific elevator on the other side.
\item The positive integer $s$ corresponds to the weight of the floor. This way, the previous relation on the indices of the sum is just the condition on the divergence of a floor, which comes from the balancing condition. The $s^{|\mu|+|\nu|}\sigma_1(s)$ is the floor multiplicity appearing in Definition \ref{definition multiplicity floor diagram}, and the $k^2$ part of the multiplicity of the specific elevator. Their refined counterpart appears for the deformed operator $H_\delta^q(t)$.
\end{itemize}

\begin{rem}\label{rem caporaso operator}
Notice how the monomials in the expressions of the operators $H_\delta(t)$ and $H_\delta^q(t)$ correspond to the terms in the Caporaso-Harris formula.
\end{rem}

\begin{rem}
Sometimes, one includes an additional variable $u$, for instance in \cite{cooper2017fock}. It is used to recover the Euler characteristic of the graph encoded by the floor diagram. In the classical case, one then sets
\begin{align*}
\widetilde{H}_\delta(t,u) = & \sum_{k>0}b_{-k}b_{k} + \sum_{s,\mu,\nu,k} t^s u^{|\mu|} s^{|\mu|+|\nu|}\sigma_1(s) k^2 a_{-\mu}a_\nu b_k \\
& + \sum_{s,\mu,\nu,k} t^s u^{|\mu|+1} s^{|\mu|+|\nu|}\sigma_1(s) k^2 b_{-k}a_{-\mu}a_\nu , \\
\end{align*}
Precisely, the $b_{-k}b_k$ correspond to a half-closed interval, which is of Euler characteristic $0$, and each monomial in the other sum corresponds to a floor which encodes a graph of genus $1$, including the leaves of the elevators ending on the floor, but not the one that starts from the floor. The exponent of $u$ is then precisely the opposite of the Euler characteristic. However, this additional variable is not really needed due to the necessary relation between the number of marked points $n$, the genus $g$ and the bidegree $(d_1,d_2)$: $n=\delta d_1+2d_2+g-1$.
\end{rem}

\begin{rem}
Since the operators do not all commute, one needs to care about the order of the operators in the monomials. In particular, this prevents to right the two sums as a unique sum with $k\in\ZZ\backslash\{0\}$ since the position of the $b$ operator is important.
\end{rem}

We now state the main theorem of the section. Concretely, it means that the invariants, relative invariants, and their refined counterparts can be expressed as some coefficients of powers of the operators $H_\delta(t,u)$ (resp. $H^q_\delta(t,u)$) .

\begin{theo}\label{theorem coefficient fock operator}
We have
$$N^{\delta,\bullet}_{g,(d_1,d_2)} = \left\langle 0,1^{d_2} \right| \mathrm{Coeff}_{t^{d_1}} H_\delta(t)^{\delta d_1+2d_2+g-1} \left| 0,1^{\delta d_1+d_2}  \right\rangle, $$
$$BG^{\delta,\bullet}_{g,(d_1,d_2)} = \left\langle 0,1^{d_2} \right| \mathrm{Coeff}_{t^{d_1}} H^q_\delta(t)^{\delta d_1+2d_2+g-1} \left| 0,1^{\delta d_1+d_2}  \right\rangle, $$
and
$$N^{\delta,\bullet}_{g,(d_1,d_2)}(\mu_0,\mu_\infty,\nu_0,\nu_\infty) = \frac{\mu_\infty !}{I^{\mu_\infty+\nu_\infty}}\frac{\mu_0!}{I^{\mu_0+\nu_0}} \left\langle \mu_\infty,\nu_\infty \right| \mathrm{Coeff}_{t^{d_1}} H_\delta(t)^{|\nu_0|+|\nu_\infty|+g-1} \left| \mu_0,\nu_0  \right\rangle, $$
$$BG^{\delta,\bullet}_{g,(d_1,d_2)}(\mu_0,\mu_\infty,\nu_0,\nu_\infty) = \frac{\mu_\infty !}{I_q^{\mu_\infty+\nu_\infty}}\frac{\mu_0!}{I_q^{\mu_0+\nu_0}} \left\langle \mu_\infty,\nu_\infty \right| \mathrm{Coeff}_{t^{d_1}} H^q_\delta(t)^{|\nu_0|+|\nu_\infty|+g-1} \left| \mu_0,\nu_0  \right\rangle. $$
\end{theo}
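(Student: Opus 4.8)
The plan is to run the Fock space counterpart of the floor-diagram count of Section~5, in the spirit of \cite{cooper2017fock} and \cite{block2016fock}. I would first reduce to the relative identities, the absolute ones being the special case $\mu_0=\mu_\infty=\emptyset$, $\nu_0=1^{\delta d_1+d_2}$, $\nu_\infty=1^{d_2}$: then $|\nu_0|+|\nu_\infty|+g-1=\delta d_1+2d_2+g-1$, the prefactor $\frac{\mu_\infty!}{I^{\mu_\infty+\nu_\infty}}\frac{\mu_0!}{I^{\mu_0+\nu_0}}$ is $1$, the states $|\mu_0,\nu_0\rangle$ and $\langle\mu_\infty,\nu_\infty|$ become $|0,1^{\delta d_1+d_2}\rangle$ and $\langle 0,1^{d_2}|$, and a generic curve of bidegree $(d_1,d_2)$ meets $E_0$ in $\delta d_1+d_2$ points and $E_\infty$ in $d_2$ points transversally, so prescribing the ramification profile $(1^{\delta d_1+d_2},1^{d_2})$ imposes no condition. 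Write $N$ for the relevant exponent.

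\textbf{Expanding the matrix coefficient.} Next I would insert, between consecutive factors of $H_\delta(t)^N$ and between $H_\delta(t)^N$ and the boundary states, the resolution of the identity $\mathrm{Id}=\sum_{\mu,\nu}\frac{\mu!\,\nu!}{I^\mu I^\nu}\,|\mu,\nu\rangle\langle\mu,\nu|$ on $\F$ (and its analogue with $I_q$ on $\F^q$). This turns the matrix coefficient into a sum, over all choices of one monomial of $H_\delta(t)$ from each of the $N$ factors and of all intermediate basis vectors, of products of matrix elements of single monomials, the operator $\mathrm{Coeff}_{t^{d_1}}$ retaining only the terms whose total $t$-exponent — the total weight of the chosen floor monomials — equals $d_1$. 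Read from the bottom ket to the top bra, such a term is exactly a marked floor diagram, as already spelled out in prose before the statement: $b_{-k}b_k$ is a marked point on a weight-$k$ elevator crossing that level; $a_{-\mu}a_\nu b_k$ is a floor of weight $s$ on which the elevators listed by $\nu$ arrive from below while those listed by $\mu$ together with a distinguished elevator $b_k$ leave upward; $b_{-k}a_{-\mu}a_\nu$ is the same with that elevator leaving downward, the order of the operators (which is why the two families cannot be merged) recording its direction. The constraints $\|\mu\|+s\delta=\|\nu\|+k$ and $\|\mu\|+s\delta+k=\|\nu\|$ are the divergence relations $\div\F=\delta w_\F$; since each floor monomial adds $1$ and each $b_{-k}b_k$ adds $0$ to the genus and there are $N$ factors, the genus comes out $g$; the ket and bra provide the unbounded ends, their $a$-parts being the fixed ones. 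So the surviving terms are in bijection with the marked floor diagrams $(\Dfk,\mathfrak{m})$ of the prescribed genus, bidegree and ramification profile with the $\mu_0,\mu_\infty$-edges fixed.

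\textbf{Matching multiplicities.} Then I would match the scalar of each term with the corresponding floor multiplicity. Using $[a_n,b_m]=n\,\delta_{n+m,0}$ (resp.\ $[n]_q\,\delta_{n+m,0}$), each contracted $a_{\pm w}$ or $b_{\pm w}$ on an elevator produces a factor $w$ (resp.\ $[w]_q$); the denominators $\mu_i!$ in $a_{\pm\mu},b_{\pm\mu}$ absorb the symmetries between parallel elevators of equal weight; and together with the two copies of $I^\mu/\mu!$ — from the identity resolution and from the scalar product — these reassemble into the edge exponents $w_e^2,w_e^3,w_e,w_e^2$ of Definition~\ref{definition multiplicity floor diagram}. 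The floor coefficient $s^{|\mu|+|\nu|}\sigma_1(s)$, resp.\ $\sum_{dl=s}d^{|\mu|+|\nu|}\prod_i([li]_q/[i]_q)^{\mu_i+\nu_i}$, is exactly $m(\F)$, resp.\ $m^q(\F)$, from Proposition~\ref{proposition floor multiplicity} and Definition~\ref{definition refined multiplicity floor diagram}, the extra $k^2$ being the distinguished elevator's squared weight, and summing over the two families of monomials accounts for its two directions; the normalizations of the two boundary states together with the factor $\frac{1}{I^{\mu_0}I^{\mu_\infty}}$ built into the definition of the relative tropical count produce the prefactor $\frac{\mu_\infty!}{I^{\mu_\infty+\nu_\infty}}\frac{\mu_0!}{I^{\mu_0+\nu_0}}$. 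Summing over all terms then gives $\sum_{(\Dfk,\mathfrak{m})}m(\Dfk,\mathfrak{m})=N^\bullet_{g,(d_1,d_2)}(\mu_0,\mu_\infty,\nu_0,\nu_\infty)$ by the floor-diagram counting theorem and by Theorem~\ref{theorem correspondence}; the refined statement follows verbatim with $H_\delta\rightsquigarrow H^q_\delta$, $m\rightsquigarrow m^q$, $I\rightsquigarrow I_q$.

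\textbf{The main obstacle} is precisely this last bookkeeping: checking that the symmetry factors, the two occurrences of $I^\mu/\mu!$, the binomial coefficients implicit in how the incoming elevators are distributed among the floors, and the $1/I^{\mu_0+\mu_\infty}$ of the tropical count all cancel or combine to the exact weights of Definitions~\ref{definition multiplicity floor diagram} and~\ref{definition refined multiplicity floor diagram} and to the stated prefactor, with nothing left over. A convenient way to organize the verification is to expand $\langle\mu_\infty,\nu_\infty|\,H_\delta(t)\,|\mu_0,\nu_0\rangle$ for a single factor and recognize, monomial by monomial, the three groups of terms in the Caporaso--Harris recursion of Theorem~\ref{theorem caporaso harris formula} (compare Remark~\ref{rem caporaso operator}); the identity is then that recursion iterated $N$ times, so once the single-step match is in place the remainder is routine.
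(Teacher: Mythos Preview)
Your proposal is essentially correct and would go through, but it is organised differently from the paper's proof, and the difference is worth noting.

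The paper does \emph{not} insert resolutions of the identity. Instead it formulates and uses a version of Wick's theorem (Proposition~\ref{proposition wick theorem}): after expanding $H_\delta(t)^N$ as a sum of ordered products of monomials $m_\infty m_N\cdots m_1 m_0$, the vacuum expectation of each product is computed as a weighted sum over \emph{Feynman graphs}, namely all ways of pairing every positive-index operator with a negative-index partner of the complementary type to its right. A Feynman graph is then literally a marked floor diagram, so the bijection with floor diagrams is immediate and the only remaining work is to match the edge weights from Wick's theorem and the coefficients in $H_\delta(t)$ with Definition~\ref{definition multiplicity floor diagram}.

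Your transfer-matrix approach is valid, but the sentence ``such a term is exactly a marked floor diagram'' is where the difference bites. A pair (sequence of monomials, sequence of intermediate states) does \emph{not} determine a floor diagram uniquely: if two elevators of equal weight cross the same levels but land on different floors, the intermediate states cannot tell them apart, whereas the Feynman pairing can. In your approach this ambiguity is absorbed into the matrix elements themselves (the $\mu!$ denominators and the various $I^\mu/\mu!$ from the resolution of the identity conspire to count the pairings with the right multiplicity), which is precisely the ``main obstacle'' you flag. The paper's Feynman-graph route sidesteps this by making the pairing explicit from the start, so the bookkeeping reduces to reading off edge weights. Your alternative suggestion---checking the single-step matrix element against the Caporaso--Harris recursion of Theorem~\ref{theorem caporaso harris formula} and then iterating---is a clean way to close the bookkeeping in your framework, and is essentially equivalent to what the paper does, just phrased recursively rather than diagrammatically.
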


The operator $H_\delta(t)$ is a power series in $t$, and $\mathrm{Coeff}_{t^{d_1} }$ means that we just take the corresponding coefficient. Similarly for the deformed version.

\begin{rem}
In view of Remark \ref{rem caporaso operator}, the Caporaso-Harris formula corresponds just to expanding one term of the product $H_\delta(t)^{\delta d_1+2d_2+g-1}$ in Theorem \ref{theorem coefficient fock operator}.
\end{rem}

\begin{rem}
With the $u$ variable to recover the genus, the first equality in the theorem becomes for instance
$$N^{\delta,\bullet}_{g,(d_1,d_2)} = \left\langle 0,1^{d_2} \right| \mathrm{Coeff}_{t^{d_1}u^{g+d_2-1}} H_\delta(u,t)^{\delta d_1+2d_2+g-1} \left| 0,1^{\delta d_1+d_2}  \right\rangle. $$

\end{rem}

\begin{rem}
Notice that in the expression of $H_\delta$, the assumption on the indices of each sum ensures degree of the monomial $a_{-\mu}a_\nu b_k$ (resp. $b_{-k}a_{-\mu}a_\nu$) is equal to $\delta s$. Thus, $H_\delta$ belongs to $\sum \H_{-\delta s}t^s$, and so do its powers $H_\delta^n$. In particular,
$$\left\langle 0,1^{d_2} \right| H_\delta(t)^{\delta d_1+2d_2+g-1} \left| 0,1^{\delta d_1+d_2}  \right\rangle
= t^{d_1}\left\langle 0,1^{d_2} \right| \mathrm{Coeff}_{t^{d_1}} H_\delta(t)^{\delta d_1+2d_2+g-1} \left| 0,1^{\delta d_1+d_2}  \right\rangle,$$
since every other term in the series is $0$ by degree consideration.
\end{rem}

As in \cite{cooper2017fock} and \cite{block2016fock}, these relations allow us to find some expressions of the generating series of the invariants.

\begin{coro}
One has the following generating series:
$$\sum_{d_1,d_2,g} N_{g,(d_1,d_2)}^{\delta,\bullet} \frac{Q^{\delta d_1+2d_2+g-1}}{(\delta d_1+2d_2+g-1)!} E_0^{d_1}F^{d_2}
= \left\langle e^{a_1 F} e^{QH_\delta(E_0)}e^{a_{-1}} \right\rangle ,$$

$$\sum_{d_1,d_2,g} BG_{g,(d_1,d_2)}^{\delta,\bullet} \frac{Q^{\delta d_1+2d_2+g-1}}{(\delta d_1+2d_2+g-1)!} E_0^{d_1}F^{d_2}
= \left\langle e^{a_1 F} e^{QH^q_\delta(E_0)}e^{a_{-1}} \right\rangle .$$
\end{coro}

\begin{proof}
We have the following. On the first line, we use Theorem \ref{theorem coefficient fock operator}, on the second we replace $|0,1^{\delta d_1+d_2}\rangle$ by $e^{a_{-1}}v_\emptyset$ since only one term of the exponential series contributes to the coefficient for degree reasons, and we set $n=\delta d_1+2d_2+g-1>0$.
\begin{align*}
 & \sum_{d_1,d_2,g} N_{g,(d_1,d_2)}^{\delta,\bullet} \frac{Q^{\delta d_1+2d_2+g-1}}{(\delta d_1+2d_2+g-1)!} E_0^{d_1}F^{d_2}  \\
 = & \sum_{d_1,d_2,g} \frac{Q^{\delta d_1+2d_2+g-1}}{(\delta d_1+2d_2+g-1)!} E_0^{d_1}F^{d_2} \left\langle 0,1^{d_2} \right| \mathrm{Coeff}_{t^{d_1}} H_\delta(t)^{\delta d_1+2d_2+g-1} \left| 0,1^{\delta d_1+d_2}  \right\rangle \\
  = & \sum_{d_1,d_2,n} \frac{1}{n!} E_0^{d_1}F^{d_2} \left\langle 0,1^{d_2} \right| \mathrm{Coeff}_{t^{d_1}} (QH_\delta(t))^{n} e^{a_{-1}}\left| 0,0  \right\rangle \\
  = & \sum_{d_2,n} F^{d_2} \left\langle 0,1^{d_2} \right| \frac{(QH_\delta(E_0))^{n}}{n!} e^{a_{-1}}\left| 0,0  \right\rangle \\
  = & \sum_{d_2} F^{d_2} \left\langle 0,1^{d_2} \right| e^{QH_\delta(E_0)} e^{a_{-1}}\left| 0,0  \right\rangle \\
  = & \left\langle e^{a_1 F} e^{QH_\delta(E_0)}e^{a_{-1}} \right\rangle .\\
\end{align*}
The computation is verbatim for refined invariants.
\end{proof}

Similarly, one can compute generating series for relative invariants. For $\mu$ a partition, let $U_0^\mu=\prod_i U_{0i}^{\mu_i}$ and similarly for $V_0$, $U_\infty$ and $V_\infty$. Then, as noticed in \cite{block2016fock}, for indeterminates $U$ and $V$, one has
$$\exp\left( \frac{1}{n}(b_{-n}U + a_{-n}V)\right)v_\emptyset = \sum_{\mu,\nu}\frac{1}{I^{\mu+\nu}}U^\mu V^\nu |\mu,\nu\rangle.$$

\begin{coro}
One has the following generating series:
\begin{align*}
 & \sum_{d_1,d_2,g,\mu,\nu} N_{g,(d_1,d_2)}^{\delta,\bullet}(\mu_0,\mu_\infty,\nu_0,\nu_\infty) \frac{Q^{|\nu_0|+|\nu_\infty|+g-1}}{(|\nu_0|+|\nu_\infty|+g-1)!} E_0^{d_1}F^{d_2} \\
 = & \left\langle e^{F\sum \frac{b_{n}U_{\infty n}+a_{n}V_{\infty n}}{n}} e^{QH_\delta(E_0)}e^{\sum \frac{b_{-n}U_{0n}+a_{-n}V_{0n}}{n}} \right\rangle . \\
\end{align*}
\end{coro}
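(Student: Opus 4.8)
The plan is to mimic, essentially verbatim, the proof of the preceding corollary, now feeding in the relative form of Theorem~\ref{theorem coefficient fock operator} in place of the absolute one, together with the generating-state identity displayed just above and its adjoint. As a preliminary step I would take the adjoint of $\exp\bigl(\sum_n\tfrac1n(b_{-n}U_n+a_{-n}V_n)\bigr)v_\emptyset=\sum_{\mu,\nu}\tfrac{1}{I^{\mu+\nu}}U^\mu V^\nu|\mu,\nu\rangle$: since $a_n$ is adjoint to $a_{-n}$ and $b_n$ to $b_{-n}$, this yields $\langle v_\emptyset|\exp\bigl(\sum_n\tfrac1n(b_{n}U_n+a_{n}V_n)\bigr)=\sum_{\mu,\nu}\tfrac{1}{I^{\mu+\nu}}U^\mu V^\nu\langle\mu,\nu|$, which is the covector needed on the left.

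Then I would apply the ket identity (with variables $U_{0n},V_{0n}$) to $e^{\sum\frac{b_{-n}U_{0n}+a_{-n}V_{0n}}{n}}v_\emptyset$ and the bra identity (with variables $FU_{\infty n},FV_{\infty n}$) to $\langle v_\emptyset|e^{F\sum\frac{b_nU_{\infty n}+a_nV_{\infty n}}{n}}$; this rewrites the right-hand side of the claimed formula as a quadruple sum over $(\mu_0,\nu_0,\mu_\infty,\nu_\infty)$ of $\tfrac{F^{|\mu_\infty|+|\nu_\infty|}}{I^{\mu_0+\nu_0}I^{\mu_\infty+\nu_\infty}}\,U_0^{\mu_0}V_0^{\nu_0}U_\infty^{\mu_\infty}V_\infty^{\nu_\infty}\langle\mu_\infty,\nu_\infty|e^{QH_\delta(E_0)}|\mu_0,\nu_0\rangle$. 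Next I would expand $e^{QH_\delta(E_0)}=\sum_{n\ge0}\tfrac{Q^n}{n!}H_\delta(E_0)^n$ and invoke the grading property of $H_\delta$ recorded in the remarks after Theorem~\ref{theorem coefficient fock operator}: since $H_\delta(t)\in\bigoplus_s\H_{-\delta s}t^s$, the matrix element $\langle\mu_\infty,\nu_\infty|H_\delta(t)^n|\mu_0,\nu_0\rangle$ is a single monomial $t^{d_1}\cdot(\text{scalar})$, nonzero only when $\delta d_1=\|\mu_0+\nu_0\|-\|\mu_\infty+\nu_\infty\|$ is a nonnegative integer; hence substituting $t=E_0$ merely extracts $E_0^{d_1}\,\mathrm{Coeff}_{t^{d_1}}$. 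Putting $d_2=\|\mu_\infty+\nu_\infty\|$ one gets $\|\mu_0+\nu_0\|=\delta d_1+d_2$, which is exactly the index constraint built into $H_\delta$, and the sum over $n$ becomes a sum over $g$ through $n=|\nu_0|+|\nu_\infty|+g-1$.

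Finally I would substitute Theorem~\ref{theorem coefficient fock operator}, which identifies $\mathrm{Coeff}_{t^{d_1}}\langle\mu_\infty,\nu_\infty|H_\delta(t)^{|\nu_0|+|\nu_\infty|+g-1}|\mu_0,\nu_0\rangle$ with $\tfrac{I^{\mu_0+\nu_0}I^{\mu_\infty+\nu_\infty}}{\mu_0!\,\mu_\infty!}N^\bullet_{g,(d_1,d_2)}(\mu_0,\mu_\infty,\nu_0,\nu_\infty)$; the factors $I^{\mu_0+\nu_0}I^{\mu_\infty+\nu_\infty}$ cancel the denominators produced by the two generating states, leaving exactly the generating series on the left, each relative invariant carrying the monomial $U_0^{\mu_0}V_0^{\nu_0}U_\infty^{\mu_\infty}V_\infty^{\nu_\infty}$, the weight $F^{|\mu_\infty|+|\nu_\infty|}$ and the symmetry factor $1/(\mu_0!\,\mu_\infty!)$. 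The refined case is identical after replacing $H_\delta$, $N^\bullet$, $I^\mu$ by $H^q_\delta$, $BG^\bullet$, $I_q^\mu$. I expect the only genuinely delicate point to be the bookkeeping of the various normalization constants — the weights $\mu!/I^{\mu+\nu}$ attached to the bra and ket in Theorem~\ref{theorem coefficient fock operator} against the $1/I^{\mu+\nu}$ coming out of the exponential states, together with the correct $F$- and $Q$-exponents — but there is no new geometric or combinatorial input beyond Theorem~\ref{theorem coefficient fock operator} itself.
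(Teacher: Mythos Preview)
Your proposal is correct and is exactly the argument the paper intends: the paper does not give a separate proof for this corollary but simply writes ``Similarly, one can compute generating series for relative invariants'' and records the exponential-state identity, leaving the reader to rerun the computation from the previous corollary with the relative form of Theorem~\ref{theorem coefficient fock operator}; your write-up is precisely that. You have also correctly tracked the normalization constants (the monomials $U_0^{\mu_0}V_0^{\nu_0}U_\infty^{\mu_\infty}V_\infty^{\nu_\infty}$, the factor $1/(\mu_0!\,\mu_\infty!)$, and the $F$-exponent $|\mu_\infty|+|\nu_\infty|$) that the paper's stated left-hand side suppresses.
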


To get the generating series of invariants for irreducible curves, as stated in \cite{block2016fock}, one only needs to take the logarithm. This comes from the fact that the number of reducible curves is obtained by taking union of different irreducible components.

\begin{coro}
One has the following relations:
$$\sum_{d_1,d_2,g} N^\delta_{g,(d_1,d_2)} \frac{Q^{\delta d_1+2d_2+g-1}}{(\delta d_1+2d_2+g-1)!} E_0^{d_1}F^{d_2}
= \log\left\langle e^{a_1 F} e^{QH_\delta(E_0)}e^{a_{-1}} \right\rangle ,$$

$$\sum_{d_1,d_2,g} BG^\delta_{g,(d_1,d_2)} \frac{Q^{\delta d_1+2d_2+g-1}}{(\delta d_1+2d_2+g-1)!} E_0^{d_1}F^{d_2}
= \log\left\langle e^{a_1 F} e^{QH^q_\delta(E_0)}e^{a_{-1}} \right\rangle .$$
\end{coro}

\begin{proof}
If $S$ is the generating series for irreducible invariants, then the generating series for reducible invariants with $r$ components is exactly $\frac{1}{r!}S^r$. Taking the sum, we get that the generating series for reducible invariants is exactly $e^S$. The result follows.
\end{proof}

\begin{rem}
Notice that in this setting, the superabundant loops are not considered as curves since they do not appear in any $\TT F_{0,\alpha}$. To get the corresponding generating series, one would have to multiply by their generating series. This corresponds to compute the invariants $N_{g,(d,0)}^{\delta,\bullet}$ for $\CC F_{0,\alpha}$, where $\alpha$ is a torsion element, and include them as possible components. These numbers are $0$ for generic $\alpha$, but might be non-zero if $\alpha$ is torsion.
\end{rem}

To prove Theorem \ref{theorem coefficient fock operator}, we introduce Feynman graphs. Their main interest is that they are related to floor diagrams relatively easily, and that they provide a combinatorial interpretation of the vaccum expectation of monomials in the Heisenberg algebra, thus relating floor diagrams counts to vacuum expectations of operators. We follow what happens in \cite{cavalieri2021counting}.

Let $P=m_\infty m_n \cdots m_1 m_0$ be a product of monomials in the Heisenberg algebra such that:
\begin{itemize}[label=$\bullet$]
\item all the $m_i$ are monomials in the $a_k$ and $b_k$,
\item $m_0$ contains only $a_k$ and $b_k$ with negative indices,
\item $m_\infty$ contains only $a_k$ and $b_k$ with positive indices,
\item for all monomials $m_i$, every operator with a negative index stands left of an operator with a positive index.
\end{itemize}
The goal is to compute the vacuum expectation $\langle P\rangle$. One associates a family of graphs called \textit{Feynman graphs} to any product $P$ in the following way:
\begin{itemize}[label=$\circ$]
\item To any monomial $m_i$, associate a germ of graph with exactly one vertex, a thick edge of weight $k$ leaving the vertex on the left (resp. right) for each $b_k$ (resp. $b_{-k}$), and a thin edge of weight $k$ to the left (resp. right) for each $a_k$ (resp. $a_{-k}$).
\item For each $b_{-k}$ (resp. $a_{-k}$) in $m_0$, we associate a germ with a thick (resp. thin) edge of weight $k$ pointing to the right.
\item For each $b_k$ (resp. $a_k$) in $m_\infty$, we associate a germ with a unique vertex and a thick (resp. thin) edge of weight $k$ pointing to the left.
\item The germs are ordered by the order on the indices of the $m_i$.
\item A Feynman graph is any (marked, weighted, ordered) graph obtained by gluing pairs of half-edges such that no half-edge is left alone. A gluing between two germs of edges can occur if
	\begin{itemize}[label=-]
	\item it connects a right half-edge to a left half-edge of a vertex on its right,
	\item one of the half-edge is thickened and the other is thin,
	\item they share the same weight.
	\end{itemize}
\end{itemize}

The following theorem is Proposition 5.1 in \cite{block2016fock}. It relates the vacuum expectation of a product $P$ to the Feynman graphs obtained from $P$.

\begin{prop}{(Wick's Theorem)} \label{proposition wick theorem}
The vacuum expectation $\langle P\rangle$ for a product of monomials $P$ is equal to the weighted sum of all Feynman graphs for $P$, where each Feynman graph is weighted by the product of weights of all edges (interior edges and ends).
\end{prop}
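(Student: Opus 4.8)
The plan is to prove the formula by the standard normal-ordering computation in the Heisenberg algebra, organised as an induction on the total number $N$ of annihilation operators (the operators $a_k$ or $b_k$ with $k>0$) occurring in $P=m_\infty m_n\cdots m_1 m_0$. For the base case $N=0$ the block $m_\infty$ is empty, so $\langle P\rangle$ is the coefficient of $v_\emptyset$ in $m_n\cdots m_1 m_0\,v_\emptyset$; this equals $1$ if every $m_i$ is trivial and $0$ otherwise, since any surviving creation operator raises the degree. On the Feynman side a creation operator contributes a right half-edge, which cannot be glued to anything in the absence of left half-edges, so there is no admissible graph unless all $m_i$ are trivial, in which case the empty graph is the unique Feynman graph and carries weight $1$. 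Hence the two sides agree when $N=0$.

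For the inductive step I would single out the rightmost annihilation operator $c$ of the word $P$ (it exists because $N\geqslant 1$). By definition every operator to the right of $c$ in $P$ is a creation operator. Commuting $c$ rightward past those creation operators and using $[a_n,a_m]=[b_n,b_m]=0$ together with $[a_n,b_m]=[n]_q\delta_{n+m,0}$ (resp.\ $n\,\delta_{n+m,0}$ in $\H$), one obtains
$$\langle P\rangle=\sum_{c'} w_{c'}\,\langle P^{c,c'}\rangle ,$$
where $c'$ runs over the creation operators to the right of $c$ that pair with it — an $a_k$ with a $b_{-k}$, a $b_k$ with an $a_{-k}$ — the scalar $w_{c'}$ is the resulting structure constant ($[k]_q$, resp.\ $k$, where $k$ is the common weight of $c$ and $c'$), and $P^{c,c'}$ is obtained from $P$ by deleting both $c$ and $c'$; the residual ``normal'' term, with $c$ pushed to the far right, is discarded since $c$ then annihilates $v_\emptyset$. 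By the induction hypothesis each $\langle P^{c,c'}\rangle$ is the weighted sum of Feynman graphs of $P^{c,c'}$.

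It then remains to verify that this algebraic recursion mirrors, term by term, the recursion on Feynman graphs of $P$. The operator $c$ is an annihilation operator and therefore carries a single left half-edge; every admissible partner $c'$ is a creation operator and carries a right half-edge; because the only non-vanishing commutator is $[a,b]$, the pair $(c,c')$ always consists of one $a$ and one $b$, hence of one thin and one thick half-edge of equal weight; and since within each block the creation operators precede the annihilation operators, $c$ is the last operator of its block $m_i$, so $c'$ lies in some block $m_j$ with $j<i$, i.e.\ on a germ earlier in the germ order. These are exactly the three conditions for gluing the half-edge of $c$ to the one of $c'$. Thus deleting $c$ and $c'$ and recursing records this edge and passes to a Feynman graph of $P^{c,c'}$, while conversely every Feynman graph of $P$ has a unique edge incident to the half-edge of $c$; summing over $c'$ therefore reconstructs all Feynman graphs of $P$, each weighted by $w_{c'}$ times the product of the weights of its other edges, that is, by the product of all its edge weights (interior edges and ends alike). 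This closes the induction.

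The one genuinely delicate point, and the step I would watch most closely, is the bookkeeping that matches the position of an operator in the word $P$ with the order of the germs and the orientation (left versus right) of their half-edges: once the germs are ordered by the indices $0<1<\cdots<n<\infty$ of the $m_i$, the gluing clause ``a right half-edge may be attached only to a left half-edge of a vertex later in the order'' becomes exactly ``a creation operator in $m_j$ may contract only with an annihilation operator in $m_i$ with $i>j$'', which is precisely what the normal-ordering step produces. Everything else is the usual Wick calculus, and I would otherwise follow \cite{block2016fock}.
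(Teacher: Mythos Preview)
Your proof is correct and follows essentially the same normal-ordering argument as the paper's sketch: move annihilation operators to the right, record the commutator contributions as edges, and observe that the surviving term vanishes on $v_\emptyset$. The only difference is organisational: the paper picks an arbitrary positive-index operator and commutes it all the way right, whereas you single out the \emph{rightmost} annihilation operator and run a formal induction on their total number, which makes the bookkeeping (in particular the check that $c'$ lies in a block $m_j$ with $j<i$, and that $P^{c,c'}$ retains the required shape) cleaner than what the paper leaves to the reader.
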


\begin{rem}
Notice that the weight of an edge is $k\in\N$ in the classical setting, and $[k]_q\in\ZZ[q^{\pm 1/2}]$ in the refined setting.
\end{rem}

\begin{proof}
We only give a sketch of proof and leave the bookkeeping task to the reader. When trying to compute the vacuum expectation, take any operator $a_k$ or $b_k$ with a positive entry. assume it is some $a_k$. We move it to the right part of the product. It commutes to every other operator except if it encounters some $b_{-k}$, where by the commutation relation we then have
$$a_k b_{-k}=k+b_{-k}a_k.$$
Thus, the vacuum expectation that we are trying to compute becomes a sum of two new vacuum expectations:
\begin{itemize}[label=-]
\item one where $a_k$ and $b_{-k}$ are deleted and replaced by a factor $k$,
\item one where we have replaced $a_kb_{-k}$ with $b_{-k}a_k$.
\end{itemize}
In the end, when $a_k$ is moved completely to the right, we get $a_k v_\emptyset=0$. Thus, we exactly get a sum of vacuum expectations for every $b_{-k}$ to the right of $a_k$. This corresponds to drawing an edge between the two germs of edges corresponding to $a_k$ and $b_{-k}$. Notice that the order between the monomials and the germs of edges on the graph is reversed since the graph is drawn from left to right when the monomials are read from right to left. Therefore, moving every monomial with a positive entry to the right, the vacuum expectation expresses as a sum over all the possibilities to connect an operator $a_k$ (resp. $b_k$) to a $b_{-k}$ (resp. $a_{-k}$) on its right. These are exactly the Feynman's graphs. They are counted with a weight equal to the product of the weights of the edges.
\end{proof}

\begin{proof}[Proof of Theorem \ref{theorem coefficient fock operator}]
We only treat the classical case. The refined case is handled the same way. We consider the following vacuum expectation:
$$\left\langle 0,1^{d_2}\right| \mathrm{Coeff}_{t^{d_1}}H_\delta(t)^{\delta d_1+2d_2+g-1} \left| 0,1^{\delta d_1+d_2} \right\rangle .$$
First, replace $|0,1^{\delta d_1+d_2}\rangle$ by $\frac{a_{-1}^{\delta d_1+d_2}}{(\delta d_1+d_2)!}|v_\emptyset\rangle$ and $\langle 0,1^{d_2}|$ by $\langle v_\emptyset |\frac{a_1^{d_2}}{d_2!}$. Then we expand the product $H_\delta(t)$, and remember only the $t^{d_1}$ term. Proposition \ref{proposition wick theorem} asserts that it can be expressed as a sum over suitable Feynman graphs, with some multiplicity coming from the coefficients of the monomials in the expression of $H_\delta$. More precisely, after the expansion of the product, one gets a sum of monomials $P$ of the form $m(P)m_\infty m_{\delta d_1+2d_2+g-1}\cdots m_1 m_0$, where
\begin{itemize}[label=$\circ$]
\item $m_\infty=a_1^{d_2}$,
\item $m_0=a_{-1}^{\delta d_1+d_2}$,
\item every other $m_i$ is either some $b_{-k}b_k$, some $b_{-k}a_{-\mu}a_\nu$ or some $a_{-\mu}a_\nu b_k$.
\item $m(P)$ is $\frac{1}{d_2!(\delta d_1+d_2)!}$ times the product of the coefficients of the chosen monomials $m_i$ in the expansion.
\end{itemize}
Clearly, the Feynman graphs for the product of monomials in the previous expansion correspond exactly to the marked floor diagrams from section \ref{subsection floor diagrams}. One just needs to check that their multiplicities are identical. The multiplicity from Definition \ref{definition multiplicity floor diagram} gives a multiplicity $w^{\val(F)-1}\sigma_1(w)$ for every floor $\F$ of weight $w$ in the diagram, and a product on the elevators of the diagram. On the Feynman graph side, we recover the floors multiplicities in the term $s^{|\mu|+|\nu|}\sigma_1(s)$. Using Proposition \ref{proposition wick theorem}, every edge of weight $k$ brings a contribution $k$. This matches the contribution in the multiplicity from Definition \ref{definition multiplicity floor diagram}, except for the edges that are not marked. The missing factor $k^2$ from these edges comes from the coefficients in $H_\delta(t)$.

The proof of the formula for relative invariants is readily the same but with more bookkeeping, which is thus left to the reader. The only difference is that one needs to add a factor $\frac{\mu_0!}{I^{\mu_0+\nu_0}}\frac{\mu_\infty!}{I^{\mu_\infty+\nu_\infty}}$ to compensate for the weight of the unbounded ends in the formula.
\end{proof}

\bibliographystyle{plain}
\bibliography{biblio}

\end{document}